\definecolor{MyLinkColor}{rgb}{0,0,0.4}
\numberwithin{equation}{section}
\newcommand{\im}{\mathop{\rm Im}\nolimits}
\newcommand{\re}{\mathop{\rm Re}\nolimits}
\newcommand{\sign}{\mathop{\rm sign}\nolimits}
\newcommand{\PV}{\mathop{\rm PV}\nolimits}
\newcommand{\0}{\Omega}
\newcommand{\e}{\varepsilon}
\newcommand{\p}{\partial}
\newcommand{\wt}{\widetilde}
\newcommand{\ov}{\overline}
\newcommand{\A}{\mathcal{A}}
\newcommand{\bA}{\mathbb{A}}
\newcommand{\bB}{\mathbb{B}}
\newcommand{\bM}{\mathbb{M}}
\newcommand{\cO}{\mathcal{O}}
\newcommand{\kF}{\mathcal{F}}
\newcommand{\kH}{\mathcal{H}}
\newcommand{\kL}{\mathcal{L}}
\newcommand{\C}{\mathbb{C}}
\newcommand{\E}{\mathbb{E}}
\newcommand{\F}{\mathbb{F}}
\newcommand{\R}{\mathbb{R}}
\newcommand{\N}{\mathbb{N}}
\newcommand{\Z}{\mathbb{Z}}
\DeclareMathOperator{\supp}{supp}
\newtheorem{thm}{Theorem}[section]
\newtheorem{prop}[thm]{Proposition}
\newtheorem{lemma}[thm]{Lemma}
\newtheorem{cor}[thm]{Corollary}
\newtheorem{rem}[thm]{Remark}
\numberwithin{equation}{section}
\title[The Muskat problem in 2D]{The Muskat problem in 2D: equivalence of formulations, well-posedness, and regularity results}
\author{Bogdan--Vasile Matioc}
\address{Fakult\"at f\"ur Mathematik, Universit\"at Regensburg,   93040 Regensburg, Deutschland.}
\email{bogdan.matioc@ur.de}
\subjclass[2010]{35R37; 35K59; 35K93;  35Q35; 42B20}
\keywords{Muskat problem; Surface tension; Singular integral}
\begin{document}

\begin{abstract}
In this paper we  consider the Muskat problem describing the motion of two unbounded  immiscible fluid layers   with equal viscosities in vertical or horizontal two-dimensional geometries.
We first prove that  the mathematical model can be formulated as an evolution problem for the  sharp interface separating the two fluids, which turns out to be, in 
a suitable functional analytic setting, 
quasilinear and of parabolic type.
Based upon these properties, we then establish the local well-posedness of the problem for arbitrary large initial data and show that the solutions become  instantly real-analytic in time and space.
Our method allows us to choose the initial data in the class $H^s,$ $s\in(3/2,2)$, when neglecting surface tension, respectively in $H^s,$ $s\in(2,3),$ when surface tension effects are included.
Besides, we provide new criteria for the global existence of solutions.
\end{abstract}

\maketitle

\tableofcontents

 %%%%%%%%%%%%%%%%%%%%%%%%%%%%%%%%%%%%%%%%%%%%%%%%%%%%%%%%%%%%%%%%%%%
%%%%%%%%%%%%%%%%%%%%%%%%%%%%%%%%%%%%%%%%%%%%%%%%%%%%%%%%%%%%%%%%%%%%
%%%%%%%%%%%%%%%%%%%%%%%%%%%%%%%%%%%%%%%%%%%%%%%%%%%%%%%%%%%%%%%%%%%%
%%%%%%%%%%%%%%%%%%%%%%%%%%%%%%%%%%%%%%%%%%%%%%%%%%%%%%%%%%%%%%%%%%%
%%%%%%%%%%%%%%%%%%%%%%%%%%%%%%%%%%%%%%%%%%%%%%%%%%%%%%%%%%%%%%%%%%%%
%%%%%%%%%%%%%%%%%%%%%%%%%%%%%%%%%%%%%%%%%%%%%%%%%%%%%%%%%%%%%%%%%%%%
\section{Introduction and the main results}\label{S1}
 %%%%%%%%%%%%%%%%%%%%%%%%%%%%%%%%%%%%%%%%%%%%%%%%%%%%%%%%%%%%%%%%%%%
%%%%%%%%%%%%%%%%%%%%%%%%%%%%%%%%%%%%%%%%%%%%%%%%%%%%%%%%%%%%%%%%%%%%
%%%%%%%%%%%%%%%%%%%%%%%%%%%%%%%%%%%%%%%%%%%%%%%%%%%%%%%%%%%%%%%%%%%%
%%%%%%%%%%%%%%%%%%%%%%%%%%%%%%%%%%%%%%%%%%%%%%%%%%%%%%%%%%%%%%%%%%%
%%%%%%%%%%%%%%%%%%%%%%%%%%%%%%%%%%%%%%%%%%%%%%%%%%%%%%%%%%%%%%%%%%%%
%%%%%%%%%%%%%%%%%%%%%%%%%%%%%%%%%%%%%%%%%%%%%%%%%%%%%%%%%%%%%%%%%%%%

The Muskat problem  is a classical model proposed in \cite{Mu34} to describe the motion of two immiscible fluids in a porous medium or a Hele-Shaw cell.
We consider here the particular case when the fluids have  equal viscosities and we assume that the flows are two-dimensional.
Furthermore, we consider an unbounded geometry corresponding  to fluid layers that occupy the entire space, the fluid motion being  localized and the fluid system close to the rest state far away from the origin.  
We further assume that the fluids are separated by a sharp interface which flattens out at infinity, evolves in time, and is unknown.
We consider two different scenarios for this unconfined Muskat problem: 
\begin{itemize}
 \item[(a)]   in the absence of surface tension effects at the free boundary, the Hele-Shaw cell is vertical and the fluid located below is more dense;
 \item[(b)] in the presence of surface tension effects,  the Hele-Shaw cell is either vertical or horizontal and we make no restrictions on the  densities of the fluids. 
\end{itemize}

One big advantage of considering this setting is that the equations of motion  can be very  elegantly formulated as a single evolution equation for the interface 
between the fluids.
Indeed, parameterizing this interface  as the graph $[y=f(t,x)]$,  the Muskat problem is equivalent in this setting to an evolution problem 
for the unknown function $f$, cf. Section \ref{S1'}, and it reads
\begin{equation}\label{P}
\left\{
\begin{array}{rlll}
 \p_tf(t,x)\!\!&=&\!\!\displaystyle\frac{\sigma k}{2\pi \mu}f'(t,x)\PV\int_\R\frac{f(t,x)-f(t,x-y)}{ y^2+(f(t,x)-f(t,x-y))^2 }(\kappa(f))'(t,x-y)\, dy\\[2ex]
 &&+\displaystyle\frac{\sigma k}{2\pi \mu}\PV\int_\R\frac{y}{ y^2+(f(t,x)-f(t,x-y))^2 }(\kappa(f))'(t,x-y)\, dy\\[2ex]
 &&+\displaystyle\frac{\Delta_\rho k}{2\pi \mu}\PV\int_\R\frac{y(f'(t,x)-f'(t,x-y))}{ y^2+(f(t,x)-f(t,x-y))^2 }\, dy\qquad \text{for $ t>0$, $x\in\R$},\\[2ex]
  f(0,\cdot)\!\!&=&\!\!f_0.
\end{array}
\right.    
\end{equation} 
For brevity we write   $f'$ for  the spatial derivative  $\p_x f.$ 
We let   $k$ denote the permeability of the homogeneous porous medium, $\mu$ is the viscosity coefficient of the fluids, $\sigma$ is the surface tension coefficient at the free boundary,  and
\[\Delta\rho:=g(\rho_--\rho_+),\]
where $g$ is the Earth's gravity and $\rho_\pm$ is the density of the fluid 
which occupies the domain $\0_\pm(t)$ defined by
\[ \0_-(t):=[y<f(t,x)]\qquad\text{and}\qquad \0_+(t):=[y>f(t,x)].\]
Furthermore,  $\kappa(f(t))$ is the curvature of the graph $[y=f(t,x)] $ and
$\PV$ denotes the principal value which, depending on the regularity of the functions under the integral, is taken at zero and/or at infinity.
Our analysis covers the following scenarios 
\[
\text{(a)} \,\, \sigma=0,\, \Delta_\rho>0 \qquad\text{and}\qquad \text{(b)} \,\, \sigma>0,\, \Delta_\rho\in\R,
\]
meaning that    (a)  corresponds to the stable case when   the denser fluid is located below.

 Due to its physical relevance \cite{Be88}, the Muskat problem has been widely studied in  the last decades in several geometries and physical settings and with various methods. 
 When neglecting surface tension effects the well-posedness of the Muskat problem is in strong relationship with the Rayleigh-Taylor condition, being implied by the latter. 
 The Rayleigh-Taylor condition, which appears first in \cite{ST58}, is a sign restriction on the jump of the pressure gradient in  normal direction at the free boundary.
 For fluids with equal viscosities moving  in a vertical geometry, it reduces    to the simple relation
 \[
 \Delta_\rho>0,
 \]
 cf. e.g. \cite{CG10, EMW18} (see also the equations \eqref{eq:S1}-\eqref{eq:S2}).
The first local existence result has been established in \cite{Y96} by using Newton's iteration method, the analysis in \cite{A04, BCG14, BCS16, GG14, CGSV17,CCG11,CCG13b, CG07, CG10,    CGO14}  is based  on energy estimates and the energy method,
 the authors of \cite{SCH04} use  methods from complex analysis and a version of the Cauchy-Kowalewski theorem,  
   a fixed point argument is employed in \cite{BV14} for nonregular initial data, and the approach 
 in \cite{EMM12a, EM11a,  EMW18} relies on the formulation of the problem as a nonlinear and nonlocal parabolic equation together with an abstract well-posedness result from \cite{DG79}  based on
  continuous maximal regularity.
  Other papers study the qualitative aspects of  solutions to the  Muskat problem for fluids with equal viscosities, such as: global existence of strong and weak solutions \cite{CCGS13, CGSV17, GB14},
  existence of initial data for which solutions turn over \cite{ CCFG13, CCFGL12,CGFL11},   the absence of squirt or splash singularities \cite{CG10,   GS14}.
  
  Compared to the zero surface tension case, the Muskat problem with surface tension is less well studied.
  When allowing for surface tension, the Rayleigh-Taylor condition is no longer needed and the problem is well-posed for general initial data.
 While some of the references require  quite high regularity from the initial data, cf. \cite{A14, FT03, HTY97, To17}, optimal results are established in bounded or periodic geometries under the observation that 
 the Muskat problem with surface tension can be formulated as a quasilinear parabolic evolution problem, cf. \cite{EMW18, PS16}.
  
The stability properties of equilibria which are, depending on the physical scenario,  horizontal lines \cite{BCS16, EEM09c, EMM12a, EM11a},   finger-shaped   \cite{EEM09c, EMM12a, EM11a}, 
circular \cite{FT03}, or a union of disjoint circles/spheres \cite{PS16x} 
  have been also addressed  in the references just mentioned.

In this paper we first rigorously prove in Section \ref{S1'} that the Muskat problem in the classical formulation \eqref{PB} and the system \eqref{P} are equivalent for a certain class of solutions.
Thereafter, the   analysis of \eqref{P} starts from the obvious observation that the right-hand side of the first equation  of \eqref{P} is linear with  respect to the highest order spatial derivative of $f,$ 
that is this particular Muskat problem has a quasilinear structure  (also when neglecting surface tension). 
This property  is not obvious in the  particular geometry considered in \cite{ EMW18, Y96} (when $\sigma=0$).
In a suitable functional analytic setting  we  then prove that \eqref{P} is additionally parabolic for general initial data.
The parabolic character was established previously for bounded geometries \cite{EMM12a, EM11a,   EMW18, PS16, PS16x} (in the absence of surface tension effects only when the Rayleigh-Taylor condition holds),
but for \eqref{P} only for small initial data, cf. \cite{CCGS13, CG07}.   
These two aspects, that is the quasilinearity and the parabolicity, enable us to  use   abstract  results for quasilinear parabolic problems  due to H. Amann  \cite[Section 12]{Am93} to prove, by similar strategies, the 
well-posedness of the Muskat problem with and without surface tension. 

It is worth emphasizing that for this particular Muskat problem the local well-posedness is established, in the zero surface tension case, only for initial data that are twice weakly differentiable 
and which  belong to $  W^2_p(\R)$ for some $p\in(1,\infty],$ cf. \cite{CGSV17}.
Our first main result, i.e. Theorem \ref{MT1}, extends the local well-posedness to general initial data in $H^s(\R)$ with $s\in(3/2,2)$. 
For the unconfined  Muskat problem  with surface tension  the  well-posedness is considered only in \cite{A14, To17} and in both papers 
the authors require that $f_0\in H^s(\mathbb{X})$, with $\mathbb{X}\in\{\R,\mathbb{T}\}$ and $s\geq6$.
In our well-posedness result, i.e.  Theorem \ref{MT1K},  the curvature of the initial data may be even unbounded as we allow for general  initial data  $f_0\in H^s(\R)$ with $s\in(2,3)$.
Additionally, we  also obtain new criteria for  the existence of global solutions to the Muskat problem with and without surface tension and, as a consequence of the parabolic character of the equations, 
we show  that the fluid interfaces become instantly real-analytic.

Our strategy is the following: we formulate, in a suitable functional analytic setting,   \eqref{P} as a quasilinear evolution problem of the form
\footnote{ We write $\dot f$ to denote the derivative $d f/d t$.}
\begin{equation*} 
  \dot f=\Phi_\sigma(f)[f],\quad t>0,\qquad f(0)=f_0,
\end{equation*}
and then we study the properties of the operator $\Phi_\sigma$.
We differentiate between the case  $\sigma=0$, studied in the Sections \ref{S2}-\ref{S4},  when we simply write $\Phi_\sigma=:\Phi,$ and the case $\sigma>0$, as  in the first case  $\Phi(f)$ is a nonlocal operator of order $1$ and in the second case  
$\Phi_\sigma(f)$ has order $3$ (for $f$ appropriately chosen).
At the core of our estimates         lies the following deep  result from harmonic analysis:
given a Lipschitz   function $a:\R\to\R,$ the singular integral operator
\begin{align}\label{FFF}
h\mapsto\left[x\mapsto \PV \int_\R \frac{h(x-y)}{y}\exp\Big(i\frac{a(x)-a(x-y)}{y}\Big)\, dy\right],
\end{align}
belongs to $\kL(L_2(\R))$ and its norm is bounded by $C(1+\|a'\|_\infty)$, cf.   \cite{TM86}, with $C$ denoting a universal constant independent of $a$.
Relying on \eqref{FFF}, we study  the mapping properties of $\Phi_\sigma$ and show, for suitable $f$, that $\Phi_\sigma(f)$ is 
the generator of a strongly continuous and real-analytic semigroup.
The main results of this paper, that is the Theorems \ref{MT1}-\ref{MT2}, are  then obtained by employing  abstract results presented in \cite[Section 12]{Am93}, and which we briefly recall at the end of this section.
The line of approach is close to the one we followed in \cite{EMW18}, however the functional analytic setting and the methods used to  establish  the needed estimates are substantially different.
We expect that our method extends to the general case when $\mu_-\neq\mu_+$ and we believe to obtain, for periodic flows, 
a similar stability behavior of the -- flat and finger shaped -- equilibria as in \cite{EM11a}.\medskip

  Our first main result is the following well-posedness theorem for the Muskat problem without surface tension effects.
  \begin{thm}[Well-posedness: no surface tension]\label{MT1}
Let  $\sigma=0$ and $\Delta_\rho>0$. The problem \eqref{P} possesses for each   $f_0\in H^s(\R)$,  $s\in(3/2,2),$ a unique maximal classical solution 
\[ f:=f(\cdot; f_0)\in C([0,T_+(f_0)),H^s(\R))\cap C((0,T_+(f_0)), H^2(\R))\cap C^1((0,T_+(f_0)), H^1(\R)),\]
with $T_+(f_0)\in(0,\infty]$, and $[(t,f_0)\mapsto f(t;f_0)]$ defines a  semiflow on $H^s(\R)$. Additionally, if   
\[
\sup_{[0,T_+(f_0))\cap[0,T]}\|f(t)\|_{H^s}<\infty\qquad\text{for all $T>0$},
\]
then  $T_+(f_0)=\infty$. 
\end{thm}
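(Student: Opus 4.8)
The plan is to recast \eqref{P} with $\sigma=0$ as an abstract quasilinear parabolic problem and then to invoke the existence theory for such problems from \cite[Section 12]{Am93}. I would work with the densely embedded Banach couple $E_0:=H^1(\R)$, $E_1:=H^2(\R)$; for $s\in(3/2,2)$ the space $H^s(\R)$ is then, up to equivalent norms, the real interpolation space $(E_0,E_1)_{s-1,2}$, and the Sobolev embedding $H^s(\R)\hookrightarrow C^1(\R)$ (available precisely for $s>3/2$) guarantees that the right-hand side of \eqref{P} is well defined on $H^s(\R)$. Since that right-hand side is linear in $f'$, with the remaining dependence on $f$ only through the zeroth-order differences $f(x)-f(x-y)$, it can be written as $\Phi(f)[f]$, where for fixed $f$
\[
\Phi(f)[g]:=\frac{\Delta_\rho k}{2\pi\mu}\,\PV\int_\R\frac{y\,(g'(x)-g'(x-y))}{y^2+(f(x)-f(x-y))^2}\,dy
\]
is a linear nonlocal operator of order one. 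Hence \eqref{P} reads $\dot f=\Phi(f)[f]$, $f(0)=f_0$, and everything reduces to verifying Amann's hypotheses for this equation on the phase space $H^s(\R)$.

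First I would settle the mapping properties of $\Phi$: namely that $\Phi(f)\in\kL(E_1,E_0)$ for every $f\in H^s(\R)$ and that $[f\mapsto\Phi(f)]$ is locally Lipschitz continuous — in fact real-analytic — from $H^s(\R)$ into $\kL(E_1,E_0)$. The mechanism is to write the kernel in $\Phi(f)[g]$ in terms of the Cauchy-type quotients $\big(y\mp i(f(x)-f(x-y))\big)^{-1}$ and to resolve these, via the elementary identity $(1-iz)^{-1}=\int_0^\infty e^{-t}e^{itz}\,dt$ applied with $z=(f(x)-f(x-y))/y$, into superpositions of the operators appearing in \eqref{FFF}; the $L_2$-bound $C(1+\|f'\|_\infty)$ from \eqref{FFF} is integrable against $e^{-t}\,dt$, which is exactly what makes this work for arbitrarily large data. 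Differentiating in $g$ and in $f$, and estimating the resulting commutators with the same tool, then yields the asserted $H^1$-estimates and the (real-analytic) dependence on $f$.

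The decisive step, and the one I expect to be the main obstacle, is the parabolicity: for each $f\in H^s(\R)$ the operator $\Phi(f)$, viewed as an unbounded operator in $E_0$ with domain $E_1$, must generate a strongly continuous analytic semigroup — equivalently $-\Phi(f)\in\kH(E_1,E_0)$ in Amann's terminology — and $[f\mapsto-\Phi(f)]$ must take values in $\kH(E_1,E_0)$. Here I would argue by localization. On a short interval centred at $x_0$ one approximates $f(x)-f(x-y)$ by $f'(x_0)y$; using that $f'\in H^{s-1}(\R)$ is bounded and uniformly continuous for $s>3/2$, one finds that $\Phi(f)$ acts there as
\[
-\alpha(x_0)\,|D|,\qquad \alpha(x_0):=\frac{\Delta_\rho k}{2\mu\,(1+f'(x_0)^2)}>0,
\]
modulo a remainder which is either of order strictly less than one or has small operator norm once the localization interval is chosen small enough; in both cases the remainder is controlled via \eqref{FFF}. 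The strict positivity of $\alpha$ is exactly where the sign condition $\Delta_\rho>0$ (the Rayleigh--Taylor condition) enters. Since $-\alpha(x_0)|D|$ is a Fourier multiplier generating an analytic semigroup on $L_2(\R)$ and on $H^1(\R)$ with explicit sectorial resolvent bounds, a standard freeze-coefficients/partition-of-unity argument — of the type used for variable-coefficient elliptic operators — patches these local estimates into a sectorial resolvent estimate for $\lambda+\Phi(f)$ on $E_0$ with domain $E_1$, giving $-\Phi(f)\in\kH(E_1,E_0)$; a perturbation argument then upgrades this to continuity of $[f\mapsto-\Phi(f)]$ into $\kH(E_1,E_0)$.

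Granted all of this — so that $[f\mapsto-\Phi(f)]\in C^{1-}\big(H^s(\R),\kH(E_1,E_0)\big)$ and the equation is the purely quasilinear $\dot f+(-\Phi(f))f=0$, $f(0)=f_0$, with no inhomogeneity — the results of \cite[Section 12]{Am93} apply with interpolation parameter $\beta=s-1\in(1/2,1)$ and with the full space $H^s(\R)$ as phase space. They provide, for every $f_0\in H^s(\R)$, a unique maximal classical solution $f=f(\cdot;f_0)$ (i.e.\ the evolution equation holds in $H^1(\R)$ for every $t\in(0,T_+)$ and $f(0)=f_0$) in the class $C([0,T_+),H^s)\cap C((0,T_+),H^2)\cap C^1((0,T_+),H^1)$ with $T_+=T_+(f_0)\in(0,\infty]$, the semiflow property of $[(t,f_0)\mapsto f(t;f_0)]$ on $H^s(\R)$ together with continuous dependence on the data, and the global-existence alternative. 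Since no additional open constraint is imposed on $f$, the alternative says precisely that $T_+(f_0)<\infty$ forces the $H^s$-norm of $f$ to be unbounded on $[0,T_+(f_0))$; equivalently, if $\sup_{[0,T_+(f_0))\cap[0,T]}\|f(t)\|_{H^s}<\infty$ for every $T>0$, then $T_+(f_0)=\infty$.
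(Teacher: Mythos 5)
Your plan follows the paper's architecture quite closely: the same choice $\E_0=H^1(\R)$, $\E_1=H^2(\R)$; the recasting as $\dot f=\Phi(f)[f]$; reduction to the abstract quasilinear-parabolic result via (a) the mapping property $\Phi\in C^{1-}\big(H^s,\kL(H^2,H^1)\big)$ and (b) the generator property $-\Phi(f)\in\kH(H^2,H^1)$ obtained by freezing coefficients and localizing; and the decisive role of the singular-integral estimate \eqref{FFF}. Your Laplace-transform resolution $\phi(z)=(1+z^2)^{-1}=\re\int_0^\infty e^{-t}e^{itz}\,dt$ is a clean and legitimate variant of the paper's Fourier-series expansion of the periodized kernel (Calder\'on's trick); both reduce to \eqref{FFF} and both give the $L_2$-bound $\Phi(f)\in\kL(H^2,L_2)$.

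The genuine gap is in the passage to the $H^1$-estimate, and it is not a detail: you cannot simply apply ``the same tool'' to the differentiated operator. Differentiating gives (cf.\ \eqref{N1})
\[
(\Phi(f)[h])' = A_{0,0}(f)[h''] - 2\,A_{2,1}(f,f)[f,f',h'],
\]
and the second term involves the difference quotient $\delta_{[x,y]}f'/y$. Any reduction of this operator to the class \eqref{FFF} — whether by Laplace transform or Fourier series — requires the functions appearing inside the exponential \emph{and} as difference-quotient prefactors to be Lipschitz, since the operator-norm bound scales like $C(1+\|a'\|_\infty)$ and, more to the point, the Calder\'on-type operators $B_{n,m}$ of Remark \ref{R2} require the $a_i$ with $\delta_{[x,y]}a_i/y$ in the numerator to satisfy $a_i'\in L_\infty$. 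For $f\in H^s(\R)$ with $s\in(3/2,2)$ one has $f'\in H^{s-1}(\R)$, $s-1<1$, so $f''\notin L_\infty$; the factor $\delta_{[x,y]}f'/y$ behaves like $|y|^{s-5/2}$ near $y=0$, which is not controllable by \eqref{FFF}. The paper closes this gap with a separate device, Lemma \ref{L21'}: a Minkowski-inequality argument that estimates $\|\int K(\cdot,y)\,dy\|_2\le\int\|K(\cdot,y)\|_2\,dy$ and trades the missing regularity of $f'$ (measured in $H^{s-1}$, hence in $\mathrm{BC}^{s-3/2}$) against the \emph{extra} regularity of $c=h'\in H^1$ through the Fourier bound $\|c-\tau_y c\|_2\lesssim |y|^\tau\|c\|_{H^\tau}$. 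Without an estimate of this type, the mapping property into $H^1$ (and hence the entire argument) is not established.

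Two smaller points. First, the abstract result requires $0<\beta<\alpha<1$ with $\Phi$ Lipschitz on an open subset of $\E_\beta$ and initial data in $\E_\alpha$; taking $\beta=s-1=\alpha$, as you do, is not admissible. The paper introduces an auxiliary exponent $\ov s\in(3/2,s)$ and sets $\beta=\ov s-1<\alpha=s-1$, which also provides the H\"older-in-time regularity needed for uniqueness and for the continuation argument. Second, your local Fourier-multiplier approximant $-\alpha(x_0)|D|$ omits the drift contribution $a_\tau(x_0)\,\p_x$ coming from the multiplication part $A^1_{0,0}$ of the operator (cf.\ Remark \ref{R2} and \eqref{FM1}); the drift is first order, not lower order, so it must be retained in the localization. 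It does not destroy parabolicity — its symbol is purely imaginary and the ratio $\alpha_\tau/\beta_\tau$ is controlled (Proposition \ref{PP1}) — but the resolvent estimate must account for it.
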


The quasilinear character of the  problem is enhanced by the presence of surface tension. For this reason  we may consider, when $\sigma>0$,   
initial data with unbounded curvature. We  show in Theorem \ref{MT2} however, that the curvature becomes instantly real-analytic and bounded.  
\begin{thm}[Well-posedness: with surface tension]\label{MT1K}
Let $\sigma>0$ and $\Delta_\rho\in\R$.
 The problem \eqref{P} possesses for each   $f_0\in H^s(\R)$,  $s\in(2,3),$ a unique maximal classical solution
\[ f:=f(\cdot; f_0)\in C([0,T_+(f_0)),H^s(\R))\cap C((0,T_+(f_0)), H^3(\R))\cap C^1((0,T_+(f_0)), L_2(\R)), \] 
with $T_+(f_0)\in(0,\infty]$ and  $[(t,f_0)\mapsto f(t;f_0)]$ defines a  semiflow on $H^s(\R)$.
Additionally, if   
\[
\sup_{[0,T_+(f_0))\cap[0,T]}\|f(t)\|_{H^s}<\infty\qquad\text{for all $T>0$},
\]
then  $T_+(f_0)=\infty$.

\end{thm}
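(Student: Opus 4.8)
The plan is to realise \eqref{P} in the case $\sigma>0$ as a quasilinear Cauchy problem
\[
\dot f=\Phi_\sigma(f)[f],\qquad t>0,\qquad f(0)=f_0,
\]
and to place it in the functional analytic framework for quasilinear parabolic evolution equations of H.~Amann, with base space $E_0:=L_2(\R)$ and regularity space $E_1:=H^3(\R)$. Since $s\in(2,3)$, the number $\beta:=s/3$ lies in $(2/3,1)$ and the interpolation space $(L_2(\R),H^3(\R))_{s/3,2}$ coincides, up to equivalent norms, with $H^s(\R)$; hence, once the hypotheses of \cite[Section~12]{Am93} are verified, its conclusions — a unique maximal classical solution with the stated regularity $C([0,T_+),H^s)\cap C((0,T_+),H^3)\cap C^1((0,T_+),L_2)$, the semiflow property, and a non-continuation alternative — translate directly into Theorem~\ref{MT1K}.

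\textbf{Step 1: quasilinear structure, boundedness and analyticity of $\Phi_\sigma$.} The right-hand side of the first equation in \eqref{P} depends affinely on the top-order derivative $f'''$, which enters only through $(\kappa(f))'$. Writing each of the three integrals as a singular integral operator whose kernel is, schematically, $y^{-1}$ times a bounded function of difference quotients of $f$, the estimate \eqref{FFF} of \cite{TM86} shows that for every $f$ in a suitable open set $\U\subseteq H^s(\R)$ one has $\Phi_\sigma(f)\in\kL(H^3(\R),L_2(\R))$, with operator norm bounded locally uniformly in $f$, and that $\dot f=\Phi_\sigma(f)[f]$ is equivalent to \eqref{P}. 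Because the integrands depend real-analytically on $f$ and its derivatives, and increments are again controlled by \eqref{FFF}, one obtains $[f\mapsto\Phi_\sigma(f)]\in C^\omega\big(\U,\kL(H^3(\R),L_2(\R))\big)$. Inside $\Phi_\sigma(f)$ I would then split off a leading part $A(f)$ of order $3$ and a remainder $B(f)$ of order at most $2$, so that $B(f)\in\kL(H^{3\theta}(\R),L_2(\R))$ for some $\theta<1$; in particular the $\Delta_\rho$-term of \eqref{P}, being of order $1$, is absorbed into $B(f)$, which is precisely why no Rayleigh--Taylor restriction is needed in this regime.

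\textbf{Step 2: generation of an analytic semigroup — the main obstacle.} The heart of the proof is to show that $\Phi_\sigma(f)\in\kH\big(H^3(\R),L_2(\R)\big)$, i.e.\ that $-\Phi_\sigma(f)$, with domain $H^3(\R)$, generates a strongly continuous analytic semigroup on $L_2(\R)$ and has graph norm equivalent to $\|\cdot\|_{H^3}$. I would argue by localisation and freezing of coefficients: since $f\in H^s(\R)$ with $s>2$, the slope $f'$ is uniformly continuous, so $\R$ can be covered by intervals on which $f'$ is nearly constant; with a subordinate partition of unity, the leading operator $A(f)$ is, on each patch and up to a term of lower order and an arbitrarily small error, of the form $-a(x_0)(-\p_x^2)^{3/2}$ with $a(x_0)>0$. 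The crucial point here is that the $f'$-prefactor in the first line of \eqref{P} combines with the two surface-tension kernels so that the leading symbol is a negative, third-order elliptic multiplier — this is where $\sigma>0$ is used essentially and is the source of parabolicity. The localisation errors, the commutators with the partition of unity, and $B(f)$ are all of order $<3$ and are estimated via \eqref{FFF}; a perturbation/Neumann-series argument for the resolvent $(\lambda+\Phi_\sigma(f))^{-1}$ on a sector then promotes the patchwise generation property to $-\Phi_\sigma(f)$ itself. I expect the bookkeeping in this step — identifying which contributions are genuinely lower order in the scale $H^{3\theta}$ and obtaining the resolvent bounds uniformly in $\lambda$ — to be the most delicate part.

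\textbf{Step 3: conclusion.} Steps~1--2 furnish $[f\mapsto\Phi_\sigma(f)]\in C^\omega\big(\U,\kH(H^3(\R),L_2(\R))\big)$ with $\U$ open in $H^s(\R)=(L_2(\R),H^3(\R))_{s/3,2}$, which is exactly the input required by \cite[Section~12]{Am93}; that theory then yields the maximal classical solution, the asserted regularity, the semiflow, and the abstract continuation alternative. Finally, since $f$ is a graph the coefficient $1+(f')^2$ never degenerates, so one may take $\U=H^s(\R)$; the continuation alternative therefore reduces to the statement that $\sup_{[0,T_+(f_0))\cap[0,T]}\|f(t)\|_{H^s}<\infty$ for every $T>0$ forces $T_+(f_0)=\infty$, which completes the proof. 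The analyticity of $\Phi_\sigma$ recorded in Step~1 is moreover what feeds the parameter trick behind the real-analytic smoothing of Theorem~\ref{MT2}, although it is not needed for the present statement.
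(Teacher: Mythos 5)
Your overall strategy — casting \eqref{P} as $\dot f=\Phi_\sigma(f)[f]$ in the scale $\E_0=L_2(\R)$, $\E_1=H^3(\R)$, proving real‑analyticity of $\Phi_\sigma$ and the generator property by localisation and freezing of coefficients, and then invoking the abstract quasilinear result — is the same as the paper's (cf.\ \eqref{reg}, Theorems~\ref{TK1}--\ref{TK2}, and the proof of Theorem~\ref{MT1K} in Section~\ref{S6}). However, two points are not handled correctly.

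First, you set $\beta:=s/3$ and assert in Step~3 that having $\Phi_\sigma\in C^\omega(\U,\kH(H^3,L_2))$ with $\U$ open in $H^s(\R)=\E_{s/3}$ is ``exactly the input required'' by the abstract theorem. It is not. Theorem~\ref{T:A} requires $0<\beta<\alpha<1$ with $\cO_\beta$ open in $\E_\beta$ and $f_0\in\cO_\alpha=\cO_\beta\cap\E_\alpha$; the nonlinearity must be (locally Lipschitz or) analytic, and a generator, on a \emph{strictly weaker} space than the one where the data lives, and it is precisely the gap $\alpha-\beta>0$ that drives the quasilinear machinery. The paper therefore takes $\beta=2/3<\alpha=s/3$, i.e.\ $\E_\beta=H^2(\R)$, and proves both $\Phi_\sigma\in C^\omega(H^{2}(\R),\kL(H^3,L_2))$ and $-\Phi_\sigma(f)\in\kH(H^3,L_2)$ for \emph{every} $f\in H^2(\R)$ — not merely for $f\in H^s(\R)$. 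You would need to carry out your Steps~1--2 on $H^2$ rather than on $H^s$.

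Second, your Step~3 claims that the abstract theory delivers uniqueness in the solution class of Theorem~\ref{MT1K} and the stated global‑existence criterion. Neither follows ``directly''. The uniqueness statement in Theorem~\ref{T:A} applies only to competitors that are additionally $C^\eta([0,T],\E_\beta)=C^\eta([0,T],H^2)$ in time; a classical solution in $C([0,T],H^s)\cap C((0,T),H^3)\cap C^1((0,T),L_2)$ does not a priori have this property, because $\dot{\wt f}$ is only controlled in $L_2$. The paper closes this gap with a substantive argument: it rewrites $\Phi_\sigma(\wt f)[\wt f]$ via $\kappa(\wt f)$, uses integration by parts and Fourier–side second‑difference estimates (the bounds \eqref{L:BV1}--\eqref{L:BV6}) to show $\p_t\wt f$ is bounded in $H^{-1}(\R)$, and then interpolates between $H^{-1}$ and $H^s$ to obtain $\wt f\in C^{(s-2)/(s+1)}([0,T],H^2)$. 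An analogous upgrade — from boundedness of $\|f(t)\|_{H^s}$ to uniform continuity into a space $H^{(s+2)/2}$ sandwiched between $H^2$ and $H^s$ — is what feeds the continuation alternative of Theorem~\ref{T:A}. Without these reductions, the uniqueness and global‑existence assertions of Theorem~\ref{MT1K} are not established, and your proposal is missing this entire layer of the argument.
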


These results  reflect the fact that the Muskat problem without surface tension is a first order evolution problem, while the Muskat problem with surface tension is of third order.   
The solutions obtained in Theorems \ref{MT1} and \ref{MT1K} $(ii)$ become instantly  real-analytic.
 \begin{thm}\label{MT2}
Let $s\in(3/2,2)$ if $\sigma=0$ and $\Delta_\rho>0$, respectively $s\in(2,3)$ if $\sigma>0$. Given $f_0\in H^s(\R)$,  let  $f=f(\cdot; f_0)$ 
denote the unique maximal solution to \eqref{P} found in Theorem \ref{MT1} and \ref{MT1K}, respectively.
 Then
 \[
 [(t,x)\mapsto f(t,x)]:(0,T_+(f_0))\times\R\to\R
 \]
 is a real-analytic function. In particular, $f(t,\cdot)$ is real-analytic for each $t\in(0,T_+(f_0)).$
 Moreover,  given $k\in\N$, it holds that
 \[ 
 f\in C^\omega ((0,T_+(f_0)), H^k(\R)),
 \]
where $C^\omega$ denotes real-analyticity.
 \end{thm}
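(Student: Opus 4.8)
The plan is to deduce the smoothing property from the parameter trick of Angenent, Escher, and Masuda, exploiting the quasilinear parabolic structure $\dot f=\Phi_\sigma(f)[f]$ together with the parabolic smoothing already contained in Theorems~\ref{MT1} and~\ref{MT1K}. Set $m_\sigma:=1$ if $\sigma=0$ and $m_\sigma:=3$ if $\sigma>0$, so that for admissible $f$ the operator $\Phi_\sigma(f)$ generates an analytic semigroup and is, in the relevant functional-analytic setting, of order $m_\sigma$. A first point to record is that $f\mapsto\Phi_\sigma(f)$ is not merely $C^\infty$ but \emph{real-analytic} between the pertinent spaces: the singular integrals defining $\Phi_\sigma$ are built from the translation-invariant operators in \eqref{FFF} by finitely many sums, products, quotients with non-vanishing denominators, and compositions with entire functions, each of which preserves analyticity, while the uniform $\kL(L_2(\R))$-bounds underlying Sections~\ref{S2}--\ref{S4} (and their surface-tension counterparts) yield the local convergence of the associated power-series expansions.

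Fix $t_0\in(0,T_+(f_0))$ and $x_0\in\R$; since real-analyticity is a local property it suffices to prove it for $f$ near $(t_0,x_0)$. Choosing $0<t_1<t_0<t_2<T_+(f_0)$ and performing a preliminary parabolic bootstrap, I would assume $f(t_1)\in H^k(\R)$ for every $k\in\N$ and work with the shifted solution $w(t):=f(t_1+t)$, which on $[0,t_2-t_1]$ is the unique classical solution of $\dot w=\Phi_\sigma(w)[w]$, $w(0)=w_0:=f(t_1)$. Fixing $\tau\in(t_0-t_1,t_2-t_1)$, let $(\mathbb{E}_0,\mathbb{E}_1)$ be the pair of spaces of (continuous) maximal regularity over $[0,\tau]$ for the order-$m_\sigma$ problem, with trace space $\gamma\mathbb{E}_1$, and define on a small neighbourhood $U\subset\C^2$ of $(1,0)$ the map
\[
  \Psi\colon\;U\times\mathbb{E}_1\longrightarrow\mathbb{E}_0\times\gamma\mathbb{E}_1,\qquad
  \Psi(\lambda,\mu,v):=\bigl(\dot v-\lambda\,\Phi_\sigma(v)[v]-\mu\,v',\;v(0)-w_0\bigr).
\]
The plan is to apply the analytic implicit function theorem to $\Psi$ at $(1,0,w)$, for which I would check: (i) $\Psi$ is real-analytic — affine in $(\lambda,\mu)$, real-analytic in $v$ by the previous paragraph, and for $(\lambda,\mu)$ near $(1,0)$ the operator $\lambda\Phi_\sigma(v)+\mu\,\p_x$ still has the order-$m_\sigma$ parabolic structure, $\mu\,\p_x$ being genuinely lower order when $\sigma>0$ and a small convective perturbation of the sectorial operator $\Phi(v)$ when $\sigma=0$; (ii) $\Psi(1,0,w)=0$; and (iii) $D_v\Psi(1,0,w)\colon h\mapsto(\dot h-A(t)h,\,h(0))$, with $A(t)h:=\Phi_\sigma(w(t))[h]+D\Phi_\sigma(w(t))[h]\,[w(t)]$, is a topological isomorphism — this is the statement that the non-autonomous linearised problem enjoys maximal regularity, which follows, via the theory already used for Theorems~\ref{MT1}--\ref{MT1K}, from the continuity of $t\mapsto A(t)$ into generators of analytic semigroups.

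Granting this, the analytic implicit function theorem produces a neighbourhood $V\subset\C^2$ of $(1,0)$ and a real-analytic map $V\ni(\lambda,\mu)\mapsto v_{\lambda,\mu}\in\mathbb{E}_1$ with $v_{1,0}=w|_{[0,\tau]}$ and $\Psi(\lambda,\mu,v_{\lambda,\mu})=0$. Shrinking $V$ so that $\lambda\tau<t_2-t_1$ for real $(\lambda,\mu)\in V$, the translation invariance of $\Phi_\sigma$ shows that $(t,x)\mapsto w(\lambda t,x+\mu t)$ solves the same problem on $[0,\tau]$, so by uniqueness $v_{\lambda,\mu}(t,x)=w(\lambda t,x+\mu t)$ for such real $(\lambda,\mu)$. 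Since $\mathbb{E}_1\hookrightarrow C([0,\tau],H^2(\R))\hookrightarrow C([0,\tau]\times\R)$, evaluation at $(t_*,x_0)$ with $t_*:=t_0-t_1\in(0,\tau)$ is continuous on $\mathbb{E}_1$, whence $g(\lambda,\mu):=v_{\lambda,\mu}(t_*,x_0)$ is real-analytic on $V$ and coincides with $w(\lambda t_*,x_0+\mu t_*)$ for real arguments; composing with the real-affine change of variables $(t,x)\mapsto(t/t_*,(x-x_0)/t_*)$, which sends $(t_*,x_0)$ to $(1,0)$, gives that $w(t,x)=g(t/t_*,(x-x_0)/t_*)$ — hence $f$ — is real-analytic near $(t_0,x_0)$; as $(t_0,x_0)$ was arbitrary, $f\in C^\omega((0,T_+(f_0))\times\R)$. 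For the remaining statement $f\in C^\omega((0,T_+(f_0)),H^k(\R))$ I would rerun the same argument with the single parameter $\lambda$ (taking $\mu\equiv0$) in the maximal-regularity setting built on the pair $(H^k(\R),H^{k+m_\sigma}(\R))$, obtaining a real-analytic map $\lambda\mapsto v_\lambda\in C([0,\tau],H^k(\R))$ near $\lambda=1$ with $v_\lambda(t)=w(\lambda t)$; evaluation at $t_*$ then yields real-analyticity of $t\mapsto f(t)\in H^k(\R)$ near $t_0$.

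The steps I expect to require real care are the \emph{real-analyticity} (not just smoothness) of $\Phi_\sigma$ between the relevant Banach spaces — where the harmonic-analytic estimates behind \eqref{FFF} must be packaged into locally convergent power series — and the isomorphism property (iii), i.e.\ maximal regularity for the non-autonomous linearisation on $[0,\tau]$. Both should already be within reach of the machinery developed for Theorems~\ref{MT1} and~\ref{MT1K}, so that the remaining bookkeeping (the choice of spaces, the translation/scaling identities, and the final change of variables) is routine.
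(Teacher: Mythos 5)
Your proposal is correct in substance but deliberately takes a different route from the paper. You run the Angenent/Escher--Simonett/Pr\"uss--Saal--Simonett parameter trick in the explicit maximal--regularity form: fix spaces $(\mathbb{E}_0,\mathbb{E}_1)$ of maximal regularity over $[0,\tau]$, define $\Psi(\lambda,\mu,v):=(\dot v-\lambda\Phi_\sigma(v)[v]-\mu v',\,v(0)-w_0)$, and apply the analytic implicit function theorem at $(1,0,w)$, for which one must check that the non\-autonomous linearisation $h\mapsto(\dot h-A(t)h,\,h(0))$ is an isomorphism. The paper intentionally avoids this last verification: it augments the \emph{phase space} rather than a parameter space, working with $u=(\lambda_1,\lambda_2,f_{\lambda_1,\lambda_2})$ and the quasilinear operator $\Psi(v)[u]=(0,0,v_1\Phi(v_3)[u_3]+v_2\p_x u_3)$ on $\F_0=\R^2\times H^1(\R)$, $\F_1=\R^2\times H^2(\R)$, and then reads off joint analyticity from the real-analytic dependence of the semiflow on initial data given by Theorem~\ref{T:A}. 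That the generator property of $\Psi(v)$ reduces to that of $v_1\Phi(v_3)+v_2\p_x$ is then a block-triangular observation via \cite[Corollary I.1.6.3]{Am95}, so no new maximal-regularity lemma for the linearised, time-dependent problem is needed. Both approaches buy the same conclusion, and both reduce the same final change of variables $(t,x)\mapsto(t/t_*,(x-x_0)/t_*)$; yours is the more classical packaging, the paper's the more self-contained one given that Theorem~\ref{T:A} was already proved.

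One point in your write-up is glossed over and deserves a warning: your justification of the real-analyticity of $f\mapsto\Phi_\sigma(f)$ — ``sums, products, quotients and compositions with entire functions, each of which preserves analyticity'' — is not adequate as stated, because $\Phi_\sigma$ is \emph{not} a Nemytskij-type superposition operator to which such categorical arguments apply. The paper emphasises exactly this and instead estimates the Taylor remainder directly (Proposition~\ref{P14}), which requires the quantitative $n$-dependence in the multilinear bounds (Lemmas~\ref{L:IA1}--\ref{L:IA4}, yielding a bound of the form $C^{n+1}n^p\|f\|_{H^s}^{n+1}$ on the $n$-th remainder). You do flag this step as requiring ``real care,'' so it is not a gap in your plan, but the claim that the harmonic-analytic machinery ``packages into locally convergent power series'' needs precisely those growth-in-$n$ estimates, not only the $\kL(L_2)$-bound from \eqref{FFF} for a single operator.

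Finally, your treatment of $f\in C^\omega((0,T_+(f_0)),H^k(\R))$ by rerunning the one-parameter argument on the pair $(H^k,H^{k+m_\sigma})$ is sound; the paper obtains the same conclusion more tersely as a direct consequence of the analyticity of $(\lambda_1,\lambda_2)\mapsto f_{\lambda_1,\lambda_2}(t_0)$ into $H^s(\R)$ together with the $H^\infty$-smoothing established via the first part.
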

 
 As a direct consequence  of Theorems \ref{MT1} and \ref{MT2} and of \cite[Theorem 3.1]{CCGS13} (see also \cite[Remark 6.2]{CGCSS14x})  we obtain a global existence result  for  solutions to the Muskat problem 
 without surface tension that correspond to   initial data  of medium size in $H^s(\R),$ $s\in(3/2,2)$.
  In the following $\kF$ denotes the Fourier transform.
\begin{cor}\label{Cor1}
 There exists a  constant $c_0\geq 1/5$ such that for all $f_0\in H^s(\R),$ $s\in(3/2,2)$, with 
 \[|||f_0|||:=\int_\R|\xi|\,|\mathcal F f_0(\xi)|\, d\xi< c_0\]
 the solution found in Theorem \ref{MT1} exists globally. 
\end{cor}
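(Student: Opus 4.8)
The plan is to combine the local well-posedness and instantaneous smoothing from Theorems \ref{MT1} and \ref{MT2} with the global a~priori control of the Wiener-type norm $|||\cdot|||$ furnished by \cite[Theorem 3.1]{CCGS13}. First I would recall the content of that cited result: for the Muskat problem without surface tension in the stable regime $\Delta_\rho>0$, if an initial datum $f_0$ satisfies $|||f_0|||<c_0$ for a universal constant $c_0\ge 1/5$, then any sufficiently smooth solution emanating from $f_0$ satisfies the monotonicity/decay estimate $|||f(t)|||\le |||f_0|||$ (indeed it is non-increasing) for all times of existence, together with an accompanying bound that prevents blow-up; in particular the norm stays below $c_0$ for all $t$. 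The subtlety is that \cite[Theorem 3.1]{CCGS13} is stated for solutions in a higher-regularity class than the one in Theorem \ref{MT1}, so I cannot apply it directly to the $H^s$-solution with $s\in(3/2,2)$.

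The key step that resolves this is the parabolic smoothing in Theorem \ref{MT2}: given $f_0\in H^s(\R)$, $s\in(3/2,2)$, the maximal solution $f=f(\cdot;f_0)$ from Theorem \ref{MT1} satisfies $f\in C^\omega((0,T_+(f_0)),H^k(\R))$ for every $k\in\N$. Hence for any $t_0\in(0,T_+(f_0))$ the function $f(t_0,\cdot)$ lies in $\bigcap_k H^k(\R)$, so it is as regular as needed for the framework of \cite{CCGS13}; moreover by uniqueness the solution on $[t_0,T_+(f_0))$ coincides with the Muskat solution started from $f(t_0,\cdot)$. Thus the a~priori estimate of \cite[Theorem 3.1]{CCGS13} applies on $[t_0,T_+(f_0))$ \emph{provided} we know $|||f(t_0,\cdot)|||<c_0$. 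To secure this I would first show $|||\cdot|||$ is (at least) lower semicontinuous, or more simply continuous, along the solution near $t=0$: since $f\in C([0,T_+(f_0)),H^s(\R))$ with $s>3/2$ and $|\xi||\kF f(\xi)|\le C|\xi|^{1-s}\cdot |\xi|^{s}|\kF f(\xi)|$ with $|\xi|^{1-s}\in L_2$ near infinity and $|\xi|$ handling the origin, one gets $|||g|||\le C\|g\|_{H^s}$ for $s>3/2$, so $t\mapsto |||f(t,\cdot)|||$ is continuous on $[0,T_+(f_0))$ and in particular $|||f(t_0,\cdot)|||<c_0$ for $t_0$ small. (Alternatively one invokes that $|||f(t)|||$ is already non-increasing on $(0,T_+(f_0))$ by the interior regularity, and takes $t_0\downarrow 0$ using continuity at $0$.)

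Combining these: fix $t_0>0$ small with $|||f(t_0,\cdot)|||<c_0$; on $[t_0,T_+(f_0))$ the cited a~priori bound gives $\sup_{[t_0,T_+(f_0))}\|f(t)\|_{H^s}<\infty$ (the Wiener bound controls the relevant norms and rules out curvature blow-up, and by interpolation/parabolic smoothing it controls $\|f(t)\|_{H^s}$ as well), while on the compact interval $[0,t_0]$ the $H^s$-norm is trivially bounded by continuity. Therefore $\sup_{[0,T_+(f_0))\cap[0,T]}\|f(t)\|_{H^s}<\infty$ for every $T>0$, and the blow-up criterion in Theorem \ref{MT1} forces $T_+(f_0)=\infty$. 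The main obstacle is precisely the regularity mismatch just discussed — bridging the low-regularity local theory of Theorem \ref{MT1} and the high-regularity global a~priori estimate of \cite{CCGS13} — which is handled cleanly by the instantaneous real-analytic smoothing of Theorem \ref{MT2}; the remaining points (the embedding $|||g|||\lesssim\|g\|_{H^s}$ for $s>3/2$ and continuity in time) are routine.
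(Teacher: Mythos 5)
Your proposal is correct and follows the same route the paper (very tersely) takes: the paper's entire displayed proof is the estimate $|||f|||\le C\|f\|_{H^s}$ for $s>3/2$, which—together with the continuity $f\in C([0,T_+),H^s(\R))$ from Theorem \ref{MT1}, the instantaneous smoothing from Theorem \ref{MT2}, and the a~priori bound of \cite[Theorem 3.1]{CCGS13}—is exactly the chain of implications you spell out. You correctly identify the role of the estimate (continuity of $t\mapsto|||f(t)|||$ near $t=0$, so that $|||f(t_0)|||<c_0$ for small $t_0$, after which $f(t_0)\in H^\infty$ and \cite{CCGS13} together with uniqueness force $T_+(f_0)=\infty$); the paper states the estimate and leaves this bridging argument implicit.
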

 \begin{proof}
  The claim follows from the inequality
  \[
  |||f|||=\int_\R|\xi|\,|\mathcal F f(\xi)|\, d\xi\leq \|f\|_{H^s}\int_\R\frac{1}{(1+|\xi|^2)^{s-1}}\, d\xi\leq C\|f\|_{H^s}
  \]
  for $s\in(3/2,2)$ and $f\in H^s(\R)$.
 \end{proof}

\noindent{\bf An abstract setting for quasilinear parabolic evolution equations.}
In Theorem \ref{T:A} we collect abstract results from \cite[Section 12]{Am93}
for a general class of abstract quasilinear parabolic evolution equations, which we use in an essential way in our analysis. 

Given   Banach spaces $\E_0, \E_1$ with dense embedding $\E_1\hookrightarrow \E_0 $, we  define   ${\mathcal H}(\E_1,\E_0)$ as the subset of $\kL(\E_1,\E_0)$  
consisting of negative generators of
strongly continuous  analytic semigroups. 
More precisely, $\mathbb{A}\in {\mathcal H}(\E_1,\E_0)$ if
$-\mathbb{A},$ considered as an unbounded operator in $\E_0$ with  domain $\E_1$, generates a strongly continuous  and analytic  semigroup in $\kL(\E_0).$

\begin{thm}\label{T:A}
Let $\E_0, \E_1$ be Banach spaces with dense embedding $\E_1\hookrightarrow \E_0$ and let $\E_\theta:=[\E_0,\E_1]_\theta$ for  $0<\theta<1$ 
 be endowed with the $\|\,\cdot\,\|_\theta$-norm. 
Let further $0<\beta<\alpha<1$ and assume that  
\begin{equation}\label{THT}
-\Phi\in C^{1-}(\cO_\beta, \mathcal{H}(\E_1,\E_0)),
\end{equation}
where $\cO_\beta$ denotes an open subset of $\E_\beta $ and $C^{1-}$ stands for local Lipschitz continuity.
The following assertions hold for the quasilinear evolution problem 
\begin{equation}
  \dot f=\Phi(f)[f],\quad t>0,\qquad f(0)=f_0.\tag{QP}\\[-0.1ex]
\end{equation}
{\bf\em  Existence:} Given  $f_0\in \cO_\alpha:=\cO_\beta\cap \E_\alpha,$ the problem {\rm (QP)}
possesses a   maximal solution  
\[ f:=f(\cdot; f_0)\in C([0,T_+(f_0)),\cO_\alpha)\cap C((0,T_+(f_0)), \E_1)\cap C^1((0,T_+(f_0)), \E_0) \cap C^{\alpha-\beta}([0,T], \E_\beta)\]
for all $T\in(0,T_+(f_0))$, with $T_+(f_0)\in(0,\infty]$. \medskip

\noindent{\bf\em  Uniqueness:} If $\wt T\in(0,\infty]$, $\eta\in(0,\alpha-\beta]$,  and $\wt f \in  C((0,\wt T), \E_1)\cap C^1((0,\wt T), \E_0)$ satisfies 
\[
\wt f\in  C^{\eta}([0,T], \E_\beta) \qquad\text{for all $T\in(0,\wt T)$}
\]
and solves {\rm (QP)}, then $\wt T\leq T_+(f_0)$ and $\wt f=f$ on $[0,\wt T)$.\medskip

\noindent{\bf\em  Criterion for global existence:} If $f:[0,T]\cap[0,T_+(f_0))\to \cO_\alpha$ is uniformly continuous for all $T>0$, then  
\[
T_+(f_0)=\infty\qquad\text{or\qquad $T_+(f_0)<\infty$ and ${\rm dist}(f(t),\p\cO_\alpha)\to 0$ for $t\to T_+(f_0)$}.
\]
\noindent{\bf\em  Continuous dependence of initial data:}
The  mapping $[(t,f_0)\mapsto f(t;f_0)]$ defines a  semiflow on $\cO_\alpha $ and, if   $\Phi\in C^{\omega}(\cO_\beta, \mathcal{L}(\E_1,\E_0))$, then
\[[(t,f_0)\mapsto f(t;f_0)]:\{(t,f_0)\,:\, f_0\in\cO_\alpha, t\in(0,T_+(f_0))\}\to \E_\alpha\]
is a real-analytic map too.
\end{thm}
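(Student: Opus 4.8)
The plan is to reduce Theorem~\ref{T:A} to the results of H.~Amann collected in \cite[Section~12]{Am93}, since the statement is precisely a compilation of those; the only genuine work is to check that the present hypotheses fit Amann's framework. The delicate point is the interpolation setting: the spaces $\E_\theta=[\E_0,\E_1]_\theta$ used here are complex interpolation spaces, whereas the maximal-regularity machinery is phrased for admissible interpolation functors. One first records that, thanks to the dense embedding $\E_1\hookrightarrow\E_0$ and reiteration, the scale $(\E_\theta)_{0<\theta<1}$ together with the continuous interpolation spaces is admissible, so that for $\mathbb{A}\in\mathcal{H}(\E_1,\E_0)$ the generated analytic semigroup satisfies the smoothing bounds $\|e^{-t\mathbb{A}}\|_{\kL(\E_\theta,\E_1)}\le C t^{\theta-1}$ on bounded time intervals, and the associated inhomogeneous Cauchy problem has parabolic maximal regularity on the time-weighted spaces adapted to the exponent $\alpha$. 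A second preliminary remark is that $\mathcal{H}(\E_1,\E_0)$ is open in $\kL(\E_1,\E_0)$, so that \eqref{THT} together with the openness of $\cO_\beta$ makes $f\mapsto\Phi(f)$ a locally Lipschitz map of $\cO_\beta$ into this open set.

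For \textbf{existence and uniqueness} I would rewrite (QP) as $\dot f+\Phi(f_0)f=(\Phi(f_0)-\Phi(f))f$, invert the autonomous sectorial left-hand side by the variation-of-constants formula, and solve the resulting fixed-point equation by contraction on a small ball centred at the constant function $t\mapsto f_0$ inside the function space
\[
C([0,T],\cO_\alpha)\cap C((0,T],\E_1)\cap C^1((0,T],\E_0)\cap C^{\alpha-\beta}([0,T],\E_\beta)
\]
equipped with the singular weight $t^{1-\alpha}$ at $t=0$ that encodes the loss of regularity of the semigroup at the initial time. The contraction property comes from the local Lipschitz bound on $\Phi$ combined with the strict inequality $\beta<\alpha<1$, which produces a positive power of $T$ in front of the nonlinear term; uniqueness for merely $C^{\eta}([0,T],\E_\beta)$-solutions with $\eta\le\alpha-\beta$ follows by a bootstrap argument showing that such a solution lies, after an arbitrarily small time, in the finer class, and then applying the contraction estimate.

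For the \textbf{maximal interval and the blow-up alternative} I would extend the local solution as long as $f([0,t])$ stays in a compact subset of $\cO_\alpha$ keeping a positive distance from $\p\cO_\alpha$; the uniform Hölder control on $[0,T]$ in $\E_\beta$ lets one pass to the limit $t\to T_+(f_0)$ and restart, yielding exactly the stated dichotomy. The \textbf{semiflow property and continuous dependence} follow from the same contraction carried out with $f_0$ as a parameter, the estimates being uniform for $f_0$ in a neighbourhood; finally, if $\Phi\in C^{\omega}(\cO_\beta,\kL(\E_1,\E_0))$, then after the substitution $f=w+f_0$, which isolates $f_0$ as a parameter and places the unknown $w$ in a fixed function space vanishing at $t=0$, the nonlinearity in the fixed-point equation is jointly real-analytic in $(w,f_0)$, and the analytic implicit function theorem upgrades continuous dependence to real-analytic dependence.

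The \textbf{main obstacle} is not any single step but the choice of the correct functional-analytic home for the solution: one needs a space with exactly the right singular weight at $t=0$ so that simultaneously (i) the frozen-coefficient linear problem has maximal regularity there, (ii) the nonlinearity is a self-map contracting on small time intervals, and (iii) the trace at $t=0$ reproduces $f_0\in\E_\alpha$. Verifying that the complex-interpolation scale used in this paper is compatible with Amann's continuous-interpolation maximal-regularity theory, and in particular that the generator class $\mathcal{H}$ is stable under the small perturbations $\Phi(f)-\Phi(f_0)$, is where the care lies; once this is settled, Theorem~\ref{T:A} is a transcription of the results of \cite[Section~12]{Am93}.
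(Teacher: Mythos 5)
Your reduction to Amann's framework is the right instinct, and your preliminary observations (openness of $\mathcal{H}(\E_1,\E_0)$, the need to handle the interpolation scale) are correct, but the core of your existence argument has a genuine gap. You propose to freeze the generator at $f_0$, write $\dot f+\bA_0 f=(\Phi(f_0)-\Phi(f))[f]$ with $\bA_0:=-\Phi(f_0)$, and contract the Duhamel formula in a space with weight $t^{1-\alpha}$. The perturbation $(\Phi(f_0)-\Phi(f))[f]$ is of \emph{full order}: its $\E_0$-norm is controlled only by $\|f(s)-f_0\|_\beta\,\|f(s)\|_1$, so even with the Hölder gain $\|f(s)-f_0\|_\beta\lesssim s^{\alpha-\beta}$ the convolution you must estimate is $\int_0^t\|e^{-(t-s)\bA_0}\|_{\kL(\E_0,\E_1)}\,s^{2\alpha-\beta-1}\,ds\sim\int_0^t(t-s)^{-1}s^{2\alpha-\beta-1}\,ds$, which diverges. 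The hypothesis $-\Phi(f)\in\mathcal{H}(\E_1,\E_0)$ gives only generation of an analytic semigroup; it does \emph{not} give the maximal regularity you invoke to absorb this singularity, and the Da Prato--Grisvard type results that would supply it are tied to continuous (or real) interpolation spaces, not to the complex interpolation couple $(\E_0,\E_1)$ used here. This is precisely the obstruction that forces one away from the frozen-coefficient Duhamel iteration for quasilinear problems of this type.

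The paper's proof avoids the issue entirely by working with \emph{non-autonomous} parabolic evolution operators: for a curve $f\in C^\rho([0,\delta],\overline{\bB}_{\E_\beta}(\ov f,R))$ with $\rho<\alpha-\beta$ one sets $\bA_f(t):=-\Phi(f(t))$, which is Hölder in time with values in a fixed class $\mathcal{H}(\E_1,\E_0,\kappa,\omega)$, obtains the evolution operator $U_{\bA_f}$ from \cite[Chapter II]{Am95}, and contracts the map $\Gamma(f):=U_{\bA_f}(\cdot,0)f_0$ on the complete metric space of such Hölder curves. The needed estimates are the uniform bounds \eqref{EOP1}--\eqref{EOP5}, which follow by interpolation from the $\kL(\E_0,\E_1)$- and $\kL(\E_j)$-bounds on $U_{\bA}$ and are insensitive to which interpolation functor is used; no maximal regularity enters. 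Your treatment of uniqueness is also vaguer than what is required: the paper does not bootstrap a $C^\eta$-solution into the finer class, but restarts both solutions at the last time of coincidence $t_0$ and observes that their translates are fixed points of a contraction on a space of $\eta/2$-Hölder curves emanating from $f(t_0)\in\cO_\alpha$, which forces them to agree on a slightly larger interval. To repair your argument you would either have to add a maximal regularity hypothesis (changing the theorem) or switch to the evolution-operator construction.
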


As usual, $[\cdot,\cdot]_\theta$ denotes the complex interpolation functor.
We choose for our particular problem $\E_i\in\{H^s(\R)\,:\, 0\leq s\leq 3\},$ $i=1,2$, and in this context we rely on the well-known interpolation property 
\begin{align}\label{IP}
[H^{s_0}(\R),H^{s_1}(\R)]_\theta=H^{(1-\theta)s_0+\theta s_1}(\R),\qquad\theta\in(0,1),\, -\infty< s_0\leq s_1<\infty,
\end{align}
cf. e.g. \cite[Remark 2, Section 2.4.2]{Tr78}.

The proof of Theorem 1.5 uses to a large extent the linear theory developed in \cite[Chapter II]{Am95}.
 The main ideas of the proof of Theorem 1.5  can be found in the references  \cite{AM86, Am88}.
 The uniqueness claim in Theorem \ref{T:A} is slightly stronger compared to the result 
 in \cite[Section 12]{Am93} and it turns out to be quite useful when establishing the uniqueness
  in the Theorems \ref{MT1}-\ref{MT1K}. For this reason  we present in the Appendix \ref{S:B}  the proof of Theorem \ref{T:A}.

In order to use Theorem \ref{T:A} in the study of the Muskat problem \eqref{P}, we have to  write this 
evolution problem in the form {\rm (QP)} and to established then the property \eqref{THT}.
With respect to this goal, we use the estimated provided in \eqref{FFF} and many techniques of nonlinear analysis.

%%%%%%%%%%%%%%%%%%%%%%%%%%%%%%%%%%%%%%%%%%%%%%%
%%%%%%%%%%%%%%%%%%%%%%%%%%%%%%%%%%%%%%%%%%%%%%
%%%%%%%%%%%%%%%%%%%%%%%%%%%%%%%%%%%%%%%%%%%%%%%
%%%%%%%%%%%%%%%%%%%%%%%%%%%%%%%%%%%%%%%%%%%%%%
%%%%%%%%%%%%%%%%%%%%%%%%%%%%%%%%%%%%%%%%%%%%%%%
%%%%%%%%%%%%%%%%%%%%%%%%%%%%%%%%%%%%%%%%%%%%%%
%%%%%%%%%%%%%%%%%%%%%%%%%%%%%%%%%%%%%%%%%%%%%%%
%%%%%%%%%%%%%%%%%%%%%%%%%%%%%%%%%%%%%%%%%%%%%%
\section{The equations of motion and the equivalent formulation}\label{S1'}
%%%%%%%%%%%%%%%%%%%%%%%%%%%%%%%%%%%%%%%%%%%%%%%
%%%%%%%%%%%%%%%%%%%%%%%%%%%%%%%%%%%%%%%%%%%%%%
%%%%%%%%%%%%%%%%%%%%%%%%%%%%%%%%%%%%%%%%%%%%%%%
%%%%%%%%%%%%%%%%%%%%%%%%%%%%%%%%%%%%%%%%%%%%%%
%%%%%%%%%%%%%%%%%%%%%%%%%%%%%%%%%%%%%%%%%%%%%%%
%%%%%%%%%%%%%%%%%%%%%%%%%%%%%%%%%%%%%%%%%%%%%%
%%%%%%%%%%%%%%%%%%%%%%%%%%%%%%%%%%%%%%%%%%%%%%%
%%%%%%%%%%%%%%%%%%%%%%%%%%%%%%%%%%%%%%%%%%%%%%

 In this section we  present  the   equations  governing the dynamic of the fluids system  and   we prove, for a certain class of solutions, that the latter are equivalent to the system \eqref{P}.
 The Muskat problem was originally proposed as a model for the encroachment of water into an oil sand, and therefore it is natural to assume that both  fluids are incompressible, of  Newtonian type, and immiscible.
 Since for flows in porous media the conservation of momentum equation can be replaced by Darcy's law, cf. e.g. \cite{Be88},
 the equations governing the dynamic of the fluids are
\begin{subequations}\label{PB}
\begin{equation}\label{eq:S1}
\left\{\begin{array}{rllllll}
{\rm div}\,  v_\pm(t)\!\!&=&\!\!0&\text{in  $\Omega_\pm(t)$} , \\[1ex]
v_\pm(t)\!\!&=&\!\!-\cfrac{k}{\mu}\big(\nabla p_\pm(t)+(0,\rho_\pm g)\big)&\text{in $ \0_\pm(t)$} 
\end{array}
\right.
\end{equation} 
 for $t> 0$, where, using the subscript $\pm$ for the fluid located at $\0_\pm(t)$, 
  $v_\pm(t):=(v_\pm^1(t),v_\pm^2(t))$ denotes the velocity vector and $p_\pm(t)$ the pressure of the fluid $\pm.$ 
These equations are supplemented by  the natural  boundary conditions at the free surface
\begin{equation}\label{eq:S2}
\left\{\begin{array}{rllllll}
p_+(t)-p_-(t)\!\!&=&\!\!\sigma\kappa(f(t))&\text{on $ [y=f(t,x)]$}, \\[1ex]
 \langle v_+(t)| \nu(t)\rangle\!\!&=&\!\!  \langle v_-(t)| \nu(t)\rangle &\text{on $[y=f(t,x)]$},
\end{array}
\right.
\end{equation} 
where  $\nu(t) $ is the unit normal at $[y=f(t,x)]$ pointing into $\0_+(t)$ and   $\langle \, \cdot\,|\,\cdot\,\rangle$  the inner product on $\R^2$.
Furthermore, the  far-field boundary  conditions
 \begin{equation}\label{eq:S3}
 \left\{\begin{array}{llllll}
f(t,x)\to0 &\text{for  $|x|\to\infty$,}\\[1ex]
v_\pm(t,x,y)\to0 &\text{for  $|(x,y)|\to\infty$}, 
\end{array}
\right.
\end{equation} 
state that  the fluid motion is localized, the  fluids being close to the rest state far way from the origin.  
The motion of the interface $[y=f(t,x)]$ is coupled to that of the fluids through  the kinematic boundary condition
 \begin{equation}\label{eq:S4}
 \p_tf(t)\, =\, \langle v_\pm(t)| (-f'(t),1)\rangle \qquad\text{on   $ [y=f(t,x)].$}
\end{equation}  
Finally, the interface at time $t=0$ is  assumed to be known
\begin{equation}\label{eq:S5}
f(0)\, =\, f_0.
\end{equation} 
\end{subequations}

The equations \eqref{PB} are known as the Muskat problem and they determine  completely the dynamic of the system.
We now show that   the Muskat  problem \eqref{PB} is equivalent to  the system \eqref{P} presented in the introduction.
The proof uses  classical results on Cauchy-type integrals defined on regular curves, see e.g. \cite{JKL93}.   
More precisely, we establish the following equivalence result.
\begin{prop}[Equivalence of the two formulations]\label{PE}
Let $\sigma\geq0$ and $T\in(0,\infty].$
 The following are equivalent:
 \begin{itemize}
 \item[$(i)$] the Muskat problem \eqref{PB} for\footnote{The regularity $f(t)\in H^5(\R)$, $t\in(0,T),$ is not optimal, that is the two formulations
 are still equivalent if $f(t)\in H^r(\R),$ $t\in(0,T)$, for   $r<5$   suitably chosen.
 In fact, if $\sigma=0,$ we may take $r=3.$
 However, as stated in   Theorem \ref{MT2}, $f(t)\in H^\infty(\R)$ for all $t\in(0,T)$,  and there is no reason for seeking the optimal range for $r$.}
 \begin{align*}
  \bullet &\quad f\in   C^1((0,T), L_2(\R))\cap C([0,T), L_2(\R)), \, f(t)\in H^5(\R) \,\, \text{for all $t\in(0,T)$},\\[1ex]
  \bullet &\quad v_\pm(t)\in C(\ov{\0_\pm(t)})\cap C^1({\0_\pm(t)}),  \, p_\pm(t)\in C^1(\ov{\0_\pm(t)})\cap C^2({\0_\pm(t)}) \, \, \text{for all $t\in(0,T);$}
 \end{align*} 

\item[$(ii)$] the evolution problem \eqref{P} for 
$$f\in   C^1((0,T), L_2(\R))\cap C([0,T), L_2(\R)), \, f(t)\in H^5(\R) \,\, \text{for all $t\in(0,T)$}.$$
\end{itemize}
\end{prop}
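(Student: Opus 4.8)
\textbf{Proof plan for Proposition \ref{PE}.}

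The plan is to establish the equivalence via the classical potential-theoretic representation of the velocity field. The key observation is that, since the viscosities are equal, Darcy's law \eqref{eq:S1} shows that each $v_\pm(t)$ is divergence-free and that $\operatorname{curl} v_\pm(t) = 0$ away from the interface; hence $v_\pm(t)$ is a harmonic vector field in $\0_\pm(t)$, and the only source of vorticity is a vortex sheet concentrated on the curve $[y=f(t,x)]$. First I would compute the strength of this vortex sheet: combining the second equation of \eqref{eq:S2} (continuity of the normal velocity) with the jump in the tangential component of Darcy's law across the interface, the jump $[v(t)]_- = v_+(t) - v_-(t)$ on $[y=f(t,x)]$ is tangential, and its amplitude $\varpi(t)$ is determined by the tangential derivative of the pressure jump $p_+(t)-p_-(t) = \sigma\kappa(f(t))$ together with the density difference term $\Delta_\rho$. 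Explicitly one gets, up to the constants $k/\mu$, a relation of the form $\varpi(t,x) = \tfrac{\sigma k}{\mu}(\kappa(f(t)))'(t,x) + \tfrac{\Delta_\rho k}{\mu} f'(t,x)$ (with an appropriate parameterization-dependent normalization); the far-field conditions \eqref{eq:S3} guarantee uniqueness of the reconstruction.

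Next I would invoke the Biot--Savart / Cauchy-integral representation: the velocity field on the interface, expressed through its real and imaginary parts after identifying $\R^2\cong\C$, is recovered from $\varpi$ by a principal-value Cauchy-type integral over the graph $[y=f(t,x)]$. This is precisely the point where the cited results on Cauchy integrals on regular (here, Lipschitz and in fact $H^5$) curves enter: for $f(t)\in H^5(\R)$ the curve is sufficiently regular that the singular integral exists in the principal-value sense, the non-tangential boundary values of the Cauchy integral from $\0_\pm(t)$ exist and satisfy the Plemelj jump formulas, and the tangential and normal traces of $v_\pm(t)$ can be read off. Plugging the trace formula for $v_\pm(t)$ on $[y=f(t,x)]$ into the kinematic boundary condition \eqref{eq:S4}, i.e. $\p_t f(t) = \langle v_\pm(t) \mid (-f'(t),1)\rangle$, and separating the contribution of the curvature term (the two integrals with the factor $\sigma k/(2\pi\mu)$) from the contribution of the gravity term (the integral with $\Delta_\rho k/(2\pi\mu)$), I would arrive exactly at the first equation of \eqref{P}. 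The initial and far-field conditions \eqref{eq:S3}, \eqref{eq:S5} translate directly into $f(0)=f_0$ and into the decay needed for the $\PV$ integrals to make sense. This gives $(i)\Rightarrow(ii)$.

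For the converse $(ii)\Rightarrow(i)$, given a solution $f$ of \eqref{P} with the stated regularity, I would \emph{define} $\varpi(t,x)$ by the formula above, define $v_\pm(t)$ in $\0_\pm(t)$ by the Cauchy integral against $\varpi(t)$, and define $p_\pm(t)$ by integrating Darcy's law (the field $v_\pm(t)+\tfrac{k}{\mu}(0,\rho_\pm g)$ is curl-free in the simply connected domain $\0_\pm(t)$, so it has a potential, unique up to an additive constant fixed by the far-field condition). One then checks that \eqref{eq:S1}, \eqref{eq:S2}, \eqref{eq:S3} hold — incompressibility and Darcy's law by construction, the pressure jump and normal-velocity continuity from the Plemelj formulas and the choice of $\varpi$ — and that the kinematic condition \eqref{eq:S4} reduces back to \eqref{P}. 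I expect the main obstacle to be the careful bookkeeping in the trace/jump computation: making sure the principal values are taken consistently (at zero for the curvature integrals, at zero and at infinity for the gravity integral, as the regularity of the integrands dictates), that the tangential-versus-normal decomposition of the Cauchy-integral traces is done correctly for a graph parameterization, and that the $H^5$ regularity of $f(t)$ is genuinely enough to justify differentiating under the integral sign and applying the Plemelj formulas pointwise. The decay of $f$ and of $v_\pm$ at infinity, needed both for the uniqueness of the reconstruction and for the convergence of the non-compactly-supported principal-value integrals, is the other point requiring care.
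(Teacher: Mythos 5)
Your overall strategy matches the paper's: extract the vortex-sheet density on the interface, reconstruct the velocity field by the Biot--Savart/Cauchy integral, compute the boundary traces via Plemelj, and substitute into the kinematic condition. The derivation of the sheet strength via the tangential component of Darcy's law combined with the pressure-jump condition is a perfectly serviceable alternative to the paper's route, which instead extracts the distributional vorticity of the global velocity field directly with Stokes' theorem; both lead to the same expression $\bar\omega = \frac{k}{\mu}\,[\sigma\kappa(f)-\Delta_\rho f]'$ (watch the sign on the $\Delta_\rho$ term, which comes out with a minus in the paper's convention).

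The genuine gap is your one-line assertion that ``the far-field conditions \eqref{eq:S3} guarantee uniqueness of the reconstruction.'' This is precisely the point where the real work lies, and you have not indicated how to prove it. You are implicitly claiming that the velocity field $v_\pm$ \emph{given} in hypothesis $(i)$ must coincide with the Biot--Savart velocity $\wt v_\pm$ you construct from $\bar\omega$, but nothing in \eqref{eq:S1}--\eqref{eq:S3} makes this automatic. The paper closes this gap with a nontrivial argument: set $V_\pm := v_\pm - \wt v_\pm$, observe that $V_\pm$ is divergence-free and (by Darcy's law) curl-free in $\0_\pm$, introduce the stream function $\psi_\pm$ of $V_\pm$, and show that the glued function $\psi := \psi_-{\bf 1}_{[y\leq f]} + \psi_+{\bf 1}_{[y>f]}$ is harmonic in $\D'(\R^2)$ --- the distributional Laplacian picks up no singular contribution from the interface precisely because both $v$ and $\wt v$ share the same normal trace and the same tangential jump $\bar\omega$. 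Then $\psi$ is the real part of an entire holomorphic function $u$ with $u' = -(V^2,V^1)$ bounded and decaying at infinity, so $u'\equiv 0$ by Liouville's theorem and hence $V\equiv 0$. Without spelling out why $\psi$ has no distributional jump across the interface, and without invoking the Liouville step, your plan asserts the conclusion rather than proving it. You should also note that, after Plemelj, the equation obtained is not yet in the form \eqref{P}: the paper uses the identity obtained by integrating $\p_s \ln\big(s^2+(\delta_{[x,s]}f)^2\big)$ in a principal-value sense to pass from the $[\sigma\kappa(f)-\Delta_\rho f]'$ form to the three integrals of \eqref{P}; this algebraic step is needed and is not merely ``bookkeeping.''
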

\begin{proof}
We first establish the implication $(i)$ $\Rightarrow$ $(ii)$.  
Assuming that we are given a solution to \eqref{PB} as in $(i)$, we have to show that the first equation of \eqref{P} holds for each   $t\in(0,T)$. 
Therefore, we fix $t\in(0,T)$ and we  do not write in the arguments that follow  the dependence of the physical variables of time $t$ explicitly.
In the following ${\bf 1}_E$ is the characteristic function of the set $E.$
Introducing the global velocity field  $v:=(v^1,v^2):=v_-{\bf 1}_{[y\leq f(x)]}+v_+{\bf 1}_{[y> f(x)]}$, Stokes' theorem together with  \eqref{eq:S1} and \eqref{eq:S2} yields that
the vorticity, which for two-dimensional flows corresponds to  the scalar function $\omega:=\p_xv^2-\p_yv^1,$ is supported on the free boundary, that is 
\[\langle \omega , \varphi\rangle=  \int_\R\ov\omega(x)\varphi(x,f(x))\, dx\qquad\text{for all $\varphi\in C^\infty_0(\R^2)$,}\]
where
$$\ov\omega:=\frac{k}{\mu}[\sigma\kappa(f)-\Delta_\rho f]'.$$
We next prove that the velocity is defined by the Biot-Savart law, that is $v=\wt v$ in $\R^2\setminus [y=f(x)]$, where 
\begin{equation}\label{V1}
 \wt v(x,y):=\frac{1}{2\pi} \int_\R\frac{(-(y-f(s)),x-s)}{ (x-s)^2+(y-f(s))^2 }\ov\omega(s)\, ds\qquad \text{in $\R^2\setminus[y=f(x)]$.}
\end{equation}
To this end we compute  the limits  $\wt v_-(x,f(x))$ and $\wt v_+(x,f(x))$  of $\wt v$ at $(x,f(x))$ when we approach this point
from below the interface $[y=f(x)] $ or from above, respectively.
Using the well-known Plemelj formula, cf. e.g. \cite{JKL93}, we find due to the fact that  $f\in H^4(\R)$ and after changing variables, the following expressions
\begin{align}
\wt v_\pm(x,f(x))=&\frac{1}{2\pi}\PV\int_\R\frac{(-(f(x)-f(x-s)),s)}{ s^2+(f(x)-f(x-s))^2 }\ov\omega(x-s)\, ds \mp\frac{1}{2}\frac{(1,f'(x))\ov\omega(x)}{1+{f'}^2(x)},\qquad x\in\R,\label{V2}
\end{align}
where the principal value needs to be taken only at $0$.
In view of Lemma \ref{L:A2} and of $f\in H^5(\R),$   the restrictions $\wt v_\pm$ of $\wt v$ to $\0_\pm$ satisfy $\wt v_\pm\in C(\ov{\0_\pm})\cap C^1({\0_\pm})$ and moreover
   $\wt v_\pm$ vanish at infinity.
Next, we define  the pressures $\wt p_\pm\in C^1(\ov{\0_\pm})\cap C^2(\0_\pm)$  by the formula
\begin{align}\label{V3}
\wt p_\pm(x,y):=c_\pm-\frac{\mu}{k}\int_0^x\wt v_\pm^1(s,\pm d)\, ds-\frac{\mu}{k}\int_{\pm d}^y\wt v_\pm^2(x,s)\, ds-\rho_\pm gy, \qquad (x,y)\in\ov\0_\pm,
\end{align}
where $d$ is a positive constant satisfying  $d>\|f\|_ \infty$ and $c_\pm\in\R$. 
For a proper choice of the constants $c_\pm$, it is not difficult to see that the pair $(\wt p_\pm,\wt v_\pm)$ satisfies all the equations \eqref{eq:S1}-\eqref{eq:S3}.
Let $V_\pm:=v_\pm-\wt v_\pm$,  $V:=(V^1,V^2):=V_-{\bf 1}_{[y\leq f(x)]}+V_+{\bf 1}_{[y> f(x)]}\in C(\R^2)$, and  
\[
\psi_\pm(x,y):=\int^{y}_{f(x)}V_\pm ^1(x,s)\, ds-\int_0^x  \langle V_\pm(s,f(s))| (-f'(s),1)\rangle \, ds\qquad\text{for $(x,y)\in\ov \0_\pm,$}
\]
be the stream function  associated to $V_\pm$. 
Recalling  \eqref{eq:S1}-\eqref{eq:S3},  we deduce that the function  $\psi:=\psi_-{\bf 1}_{[y\leq f]}+\psi_+{\bf 1}_{[y>f]}$ satisfies $\Delta\psi=0$ in $\mathcal{D}'(\R^2)$. 
Hence, $\psi$ is the real part of a holomorphic function $u:\C\to\C$.
Since $u'$ is also holomorphic and $u'=\p_x\psi-i\p_y\psi=-(V^2,V^1)$ is bounded and vanishes for $|(x,y)|\to\infty$ it follows that $u'=0$, hence $V=0$.
This proves that $v_\pm=\wt v_\pm$.

We now infer  from \eqref{eq:S4}   and \eqref{V2} that the dynamic of the free boundary separating the fluids is described by the evolution  equation 
\begin{equation*} 
\begin{array}{rlll}
 \p_tf(t,x)\!\!&=&\!\!\displaystyle\frac{k}{2\pi \mu}f'(t,x)\PV\int_\R\frac{f(t,x)-f(t,x-s)}{ s^2+(f(t,x)-f(t,x-s))^2 }[\sigma\kappa(f)-\Delta_\rho f]'(t,x-s)\, ds\\[2ex]
 &&+\displaystyle\frac{k}{2\pi \mu}\PV\int_\R\frac{s}{ s^2+(f(t,x)-f(t,x-s))^2 }[\sigma\kappa(f)-\Delta_\rho f]'(t,x-s)\, ds
\end{array}
\end{equation*} 
for $ t>0$ and  $x\in\R$.
This equation  can be further simplified by using the formula
\begin{align*}
 \int\limits_{\delta<|x|<1/\delta}\frac{\p}{\p s}\big(\ln(s^2+(f(x)-f(x-s))^2)\big) \, ds =\ln\frac{1+\delta^{2}(f(x)-f(x-1/\delta))^2}{1+\delta^{2}(f(x)-f(x+1/\delta))^2}\frac{1+\cfrac{(f(x)-f(x+\delta))^2}{\delta^2}}{1+\cfrac{(f(x)-f(x-\delta))^2}{\delta^2}}
\end{align*}
for  $\delta\in(0,1)$ and $x\in\R$.  
Letting $\delta\to0$, we get 
\begin{align*}
 0=&\frac{1}{2}\PV\int_\R\frac{\p}{\p s}\big(\ln(s^2+(f(x)-f(x-s))^2)\big) \, ds\\[1ex]
 =& \PV\int_\R\frac{s}{ s^2+(f(t,x)-f(t,x-s))^2 } \, ds+\PV\int_\R\frac{(f(t,x)-f(t,x-s)f'(t,x-s)}{ s^2+(f(t,x)-f(t,x-s))^2 }\, ds,
\end{align*}
and now the principal value needs to be taken in the first integral at zero and at infinity.
Using this identity, we have shown that the mapping $[t\to f(t)]$ satisfies  the  evolution problem \eqref{P}.

The implication $(ii)$ $\Rightarrow$ $(i)$ is now obvious.
\end{proof}

 %%%%%%%%%%%%%%%%%%%%%%%%%%%%%%%%%%%%%%%%%%%%%%%%%%%%%%%%%%%%%%%%%%%
%%%%%%%%%%%%%%%%%%%%%%%%%%%%%%%%%%%%%%%%%%%%%%%%%%%%%%%%%%%%%%%%%%%%
%%%%%%%%%%%%%%%%%%%%%%%%%%%%%%%%%%%%%%%%%%%%%%%%%%%%%%%%%%%%%%%%%%%%
%%%%%%%%%%%%%%%%%%%%%%%%%%%%%%%%%%%%%%%%%%%%%%%%%%%%%%%%%%%%%%%%%%%
%%%%%%%%%%%%%%%%%%%%%%%%%%%%%%%%%%%%%%%%%%%%%%%%%%%%%%%%%%%%%%%%%%%%
%%%%%%%%%%%%%%%%%%%%%%%%%%%%%%%%%%%%%%%%%%%%%%%%%%%%%%%%%%%%%%%%%%%%
\section{The Muskat problem without surface tension:   mapping properties }\label{S2}

 %%%%%%%%%%%%%%%%%%%%%%%%%%%%%%%%%%%%%%%%%%%%%%%%%%%%%%%%%%%%%%%%%%%
%%%%%%%%%%%%%%%%%%%%%%%%%%%%%%%%%%%%%%%%%%%%%%%%%%%%%%%%%%%%%%%%%%%%
%%%%%%%%%%%%%%%%%%%%%%%%%%%%%%%%%%%%%%%%%%%%%%%%%%%%%%%%%%%%%%%%%%%%
%%%%%%%%%%%%%%%%%%%%%%%%%%%%%%%%%%%%%%%%%%%%%%%%%%%%%%%%%%%%%%%%%%%
%%%%%%%%%%%%%%%%%%%%%%%%%%%%%%%%%%%%%%%%%%%%%%%%%%%%%%%%%%%%%%%%%%%%
%%%%%%%%%%%%%%%%%%%%%%%%%%%%%%%%%%%%%%%%%%%%%%%%%%%%%%%%%%%%%%%%%%%%
 
In the Sections \ref{S2} and \ref{S3} we consider the stable case mentioned at (a).
In this regime, after rescaling time, we may rewrite \eqref{P} in the following abstract form
\begin{equation}\label{P1}
  \dot f=\Phi(f)[f],\quad t>0,\qquad f(0)=f_0,
\end{equation}
where  $\Phi(f)$ is the linear operator    formally defined by
\begin{align}\label{opa} 
 \Phi(f)[h](x):=&\PV\int_\R\frac{y(h'(x)-h'(x-y))}{y^2+(f(x)-f(x-y))^2}\, dy.
\end{align}
We show in the next two sections that the mapping $\Phi$ satisfies all the assumptions of Theorem \ref{T:A} if we make the following choices: 
  $\E_0:=H^1(\R)$, $\E_1:=H^2(\R),$ $\E_\alpha=H^s(\R)$ with $s\in(3/2,2)$, and $\cO_\beta:=H^{\ov s}(\R)$ with $\ov s\in(3/2,s).$
The first goal is to prove that
\begin{align}\label{LLL} 
 \Phi\in C^{1-}(H^s(\R),\kL(H^2(\R), H^1(\R)))  
\end{align}
for each $s\in(3/2,2)$.
Because the property \eqref{LLL} holds for all $s\in(3/2,2)$., the parameter $\ov s$ will appear only in the proof of Theorem \ref{MT1}, which we present at the end of Section \ref{S3}.

For the sake of brevity we set
 \begin{align*}
   \delta_{[x,y]}f:=f(x)-f(x-y)  \qquad \text{for  $x,y\in\R,$} 
 \end{align*}
and therewith
\begin{align*} 
 \Phi(f)[h](x)=&\PV\int_\R\frac{\delta_{[x,y]}h'/y}{1+\big(\delta_{[x,y]}f/y\big)^2}\, dy.
\end{align*}

%%%%%%%%%%%%%%%%%%%%%%%%%%%%%%%%%%%%%%%%%%%%%%%%%%%%%%%%%%%%%%%%%%%
%%%%%%%%%%%%%%%%%%%%%%%%%%%%%%%%%%%%%%%%%%%%%%%%%%%%%%%%%%%%%%%%%%%%
%%%%%%%%%%%%%%%%%%%%%%%%%%%%%%%%%%%%%%%%%%%%%%%%%%%%%%%%%%%%%%%%%%%%
\paragraph{\bf Boundedness of some multilinear singular integral operators} 
%%%%%%%%%%%%%%%%%%%%%%%%%%%%%%%%%%%%%%%%%%%%%%%%%%%%%%%%%%%%%%%%%%%
%%%%%%%%%%%%%%%%%%%%%%%%%%%%%%%%%%%%%%%%%%%%%%%%%%%%%%%%%%%%%%%%%%%%
%%%%%%%%%%%%%%%%%%%%%%%%%%%%%%%%%%%%%%%%%%%%%%%%%%%%%%%%%%%%%%%%%%%%
We first consider  some multilinear operators  which are related to $\Phi$.\footnote{As usual, the empty product is set to be equal to $1$.}
The estimates in Lemmas \ref{L21} and \ref{L21'} enable us in particular to establish the regularity property \eqref{LLL}.
 Lemma \ref{L21} is reconsidered later on, cf.  Lemma \ref{L:IA3}, in a particular context when showing that  $\Phi$ is in fact real-analytic.  

  \begin{lemma}\label{L21} Given $n, m \in\N$,  $r\in(3/2,2)$, $a_1,  \ldots, a_{n+1}, b_1,\ldots,b_{m}\in H^r(\R)$, and a function $c\in L_2(\R)$ we define
\[
 A_{m,n}(a_1, \ldots, a_{n+1})[b_1, \ldots,b_{m}, c](x):=\PV\int_\R\frac{\prod_{i=1}^{m}\big(\delta_{[x,y]} b_i/y\big )}{\prod_{i=1}^{n+1}\big[1+\big(\delta_{[x,y]} a_i /y\big)^2\big]}\frac{ \delta_{[x,y]}c}{y} \, dy.
\]
Then:
\begin{itemize}
 \item[$(i)$] There exists a constant $C$, depending only on $r,$ $n,$ $m,$ and $\max_{i=1,\ldots, n+1}\|a_i\|_{H^r}$, such that  
\begin{equation}\label{DE}
\big\|A_{m,n}(a_1, \ldots, a_{n+1})[b_1, \ldots,b_{m}, c]\big\|_{2}\leq C\|c\|_2\prod_{i=1}^m\|b_i\|_{H^r}
\end{equation}
for all $  b_1,\ldots,b_m\in H^r(\R) $ and  $c\in L_2(\R)$. 
\item[$(ii)$] $A_{m,n}\in C^{1-}((H^r(\R))^{n+1},\kL_{m+1}((H^r(\R))^m\times L_2(\R), L_2(\R))$.
\end{itemize}
\end{lemma}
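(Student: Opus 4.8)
The plan is to reduce everything to the boundedness statement \eqref{FFF}, which I treat as a black box, and to estimate $A_{m,n}$ by peeling off the factors $\delta_{[x,y]}b_i/y$ and the denominator $\prod[1+(\delta_{[x,y]}a_i/y)^2]^{-1}$ one at a time. The key elementary observation is the pointwise bound $|\delta_{[x,y]}b/y|\le \|b'\|_\infty$ together with the fact that $H^r(\R)\hookrightarrow C^{1}(\R)$ for $r>3/2$, so each $b_i$ contributes a harmless multiplicative factor $\|b_i\|_{H^r}\gtrsim\|b_i'\|_\infty$ provided we can bound the remaining kernel in $\kL(L_2)$. Similarly the denominator is uniformly bounded by $1$, but that alone does not suffice for an $L_2$ bound: one must express the whole operator as an integral of kernels of the form appearing in \eqref{FFF}. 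Concretely I would write, using the substitution $1/(1+t^2)=\int_{\R}e^{2\pi i t\xi}\widehat{g}(\xi)\,d\xi$ for a suitable Schwartz-class (or at least integrable, with integrable Fourier transform) even function $g$ whose transform decays, a representation
\[
 \frac{1}{\prod_{i=1}^{n+1}\big[1+(\delta_{[x,y]}a_i/y)^2\big]}=\int_{\R^{n+1}}\exp\Big(2\pi i\sum_{i=1}^{n+1}\xi_i\,\frac{\delta_{[x,y]}a_i}{y}\Big)\prod_{i=1}^{n+1}\widehat g(\xi_i)\,d\xi,
\]
so that after freezing $\xi$ the phase is $\exp\big(i\,(\delta_{[x,y]}A_\xi)/y\big)$ with $A_\xi:=2\pi\sum_i\xi_i a_i$ a Lipschitz function with $\|A_\xi'\|_\infty\le 2\pi\|\xi\|_1\max_i\|a_i'\|_\infty$. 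This puts the $c$-part exactly in the form \eqref{FFF} (with $a=A_\xi$), giving an $L_2\to L_2$ bound $C(1+\|A_\xi'\|_\infty)$; integrating against $\prod\widehat g(\xi_i)\,d\xi$ and using that $\int (1+|\xi|)|\widehat g(\xi)|\,d\xi<\infty$ yields \eqref{DE} for the case $m=0$. For general $m$ one either absorbs the factors $\delta_{[x,y]}b_i/y$ into the pointwise $\|b_i'\|_\infty\le C\|b_i\|_{H^r}$ bound and repeats the argument on the reduced kernel, or (cleaner) one notes that $A_{m,n}$ is obtained from $A_{0,n}$-type operators by replacing $c$ by finitely many products and re-expanding; in any case the $b_i$ contribute only the stated product of $H^r$-norms. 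This proves part $(i)$.

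For part $(ii)$, local Lipschitz continuity in the tuple $(a_1,\dots,a_{n+1})$, I would argue by multilinearity-in-disguise: fix a bounded set in $(H^r(\R))^{n+1}$ and two tuples $(a_i)$, $(\tilde a_i)$ in it, and estimate the difference using the telescoping identity
\[
 \frac{1}{\prod_{i=1}^{n+1}[1+(\delta a_i/y)^2]}-\frac{1}{\prod_{i=1}^{n+1}[1+(\delta \tilde a_i/y)^2]}
 =\sum_{j=1}^{n+1}\frac{(\delta\tilde a_j/y)^2-(\delta a_j/y)^2}{[1+(\delta a_j/y)^2]\prod_{i<j}[1+(\delta a_i/y)^2]\prod_{i>j}[1+(\delta\tilde a_i/y)^2]},
\]
and then factoring $(\delta\tilde a_j/y)^2-(\delta a_j/y)^2=\big(\delta(\tilde a_j+a_j)/y\big)\big(\delta(\tilde a_j-a_j)/y\big)$. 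Each resulting term is again of the form $A_{m',n'}$ (with $m'\ge 1$, since we gained two extra $\delta(\cdot)/y$ factors, one of which is $\delta(\tilde a_j-a_j)/y$, and the denominator is a product of $n'+1=n+1$ Poisson-type factors with arguments among $a_i,\tilde a_i$), so part $(i)$ applies and gives a bound proportional to $\|\tilde a_j-a_j\|_{H^r}$ times a constant depending only on the bounded set. Summing over $j$ gives the Lipschitz estimate for the operator norm in $\kL_{m+1}((H^r)^m\times L_2,L_2)$. The dependence on $(b_1,\dots,b_m,c)$ is (multi)linear and bounded by $(i)$, so $A_{m,n}$ is a locally Lipschitz map into the space of bounded multilinear operators, which is exactly the claim $A_{m,n}\in C^{1-}((H^r(\R))^{n+1},\kL_{m+1}((H^r(\R))^m\times L_2(\R),L_2(\R)))$.

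The main obstacle I anticipate is not conceptual but bookkeeping: one must be careful that the singular integral defining $A_{m,n}$ actually converges as a principal value and that the Fourier representation of $1/(1+t^2)$ can be interchanged with the PV-integral. The cleanest route is to first treat the most singular contribution — the single factor $\delta_{[x,y]}c/y$, which is the only genuinely non-absolutely-integrable piece — by the $\exp(i\,\delta_{[x,y]}a/y)$-trick above, while every other factor $\delta_{[x,y]}b_i/y$ and the denominator are bounded and Lipschitz in $y$ near $0$, hence cause no convergence issue and can be handled by dominated convergence once the $c$-integral is controlled. Near $y=\infty$ one uses $\delta_{[x,y]}c/y\in L_2(dy)$ uniformly and the decay $\delta_{[x,y]}b_i/y\to 0$, so there is no PV needed at infinity once at least one $b_i$ or the $a_i$'s are present (and even with $m=0$ one uses $c\in L_2$). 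A secondary, more technical point is verifying that the constant $C$ in \eqref{DE} depends on the $a_i$ only through $\max_i\|a_i\|_{H^r}$ and the combinatorial data $n,m,r$: this is automatic from the representation since $\|A_\xi'\|_\infty\lesssim\|\xi\|_1\max_i\|a_i'\|_\infty\lesssim\|\xi\|_1\max_i\|a_i\|_{H^r}$ and the $\xi$-integral of $(1+\|\xi\|_1)\prod|\widehat g(\xi_i)|$ is a finite constant depending only on $n$.
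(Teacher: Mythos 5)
Your reduction to \eqref{FFF} is the right idea, and for $m=0$ your Fourier--integral representation of $\prod_i(1+u_i^2)^{-1}$ is a valid variant of the paper's periodization-and-Fourier-series version of Calder\'on's argument, modulo one point you only gesture at: \eqref{FFF} acts on $h(x-y)$, not on $\delta_{[x,y]}c$. You must split $\delta_{[x,y]}c=c(x)-c(x-y)$. The $c(x-y)$-piece is then exactly of the form \eqref{FFF} after your representation of the phase, but the $c(x)$-piece yields the \emph{multiplication} operator by $a(x):=\PV\int_\R\frac{1}{y}\prod_i\big[1+(\delta_{[x,y]}a_i/y)^2\big]^{-1}\prod_i(\delta_{[x,y]}b_i/y)\,dy$, whose membership in $L_\infty(\R)$ is a separate, nontrivial claim. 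The paper proves it (this is the operator $A^1_{m,n}$) by exploiting the even/odd cancellation $I(x,y)-I(x,-y)$ together with the second-difference H\"older estimate \eqref{MM}; you mention convergence near $y=0$ and $y=\infty$ but do not supply this bound.

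The more serious gap is the treatment of the factors $\delta_{[x,y]}b_i/y$ for $m\ge1$. Your first suggestion --- bound $|\delta_{[x,y]}b_i/y|\le\|b_i'\|_\infty$ pointwise and ``repeat the argument on the reduced kernel'' --- does not yield a valid operator-norm estimate: a pointwise majorant of the kernel is useless for a principal-value singular integral, whose $L_2$-boundedness relies on cancellation, not on absolute size. Indeed $\big|\PV\int K(x,y)c(x-y)\,dy\big|$ is not controlled by $\|b_i'\|_\infty\big|\PV\int K_{\mathrm{red}}(x,y)c(x-y)\,dy\big|$, and the crude bound $\int|c(x-y)|/|y|\,dy$ diverges. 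Your alternative (``replace $c$ by finitely many products and re-expand'') is not defined precisely enough to check and does not correspond to any identity I can see, since $\delta_{[x,y]}(b_1\cdots b_m c)/y$ is not $\prod(\delta_{[x,y]}b_i/y)\cdot\delta_{[x,y]}c/y$. The paper avoids both pitfalls by treating $A:=(a_1,\ldots,a_{n+1},b_1,\ldots,b_m)$ as a single $\R^{n+m+1}$-valued Lipschitz map, writing the kernel of $A^2_{m,n}$ as $y^{-1}F(\delta_{[x,y]}A/y)$ with $F(u,v)=\prod_i v_i\big/\prod_i(1+u_i^2)$, normalizing to $\|b_i'\|_\infty\le1$ so that $F$ need only be known on a box, periodizing $F$ there, and Fourier-expanding the periodic extension. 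In this way the $v$-dependence (i.e.\ the $b_i$'s) enters the phase of the \eqref{FFF}-type kernels rather than being factored out pointwise, and the product bound $\prod\|b_i'\|_\infty$ is recovered afterwards by multilinearity. Your telescoping argument for part $(ii)$ is essentially the same as the paper's and is fine.
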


%%%%%%%%%%%%%%%%%%%%%%%%%%%%%%%%%%%%%%%%%%%%%%%%%%%%%%%%%%%%%%%%%%%
%%%%%%%%%%%%%%%%%%%%%%%%%%%%%%%%%%%%%%%%%%%%%%%%%%%%%%%%%%%%%%%%%%%%
%%%%%%%%%%%%%%%%%%%%%%%%%%%%%%%%%%%%%%%%%%%%%%%%%%%%%%%%%%%%%%%%%%%%
\begin{rem}\label{R1}
 We note that \begin{align} 
 \Phi(f)[h]=A_{0,0}(f)[h']\label{N0}
\end{align} 
for all $f\in H^s(\R),$ $s\in(3/2,2)$,  and $h\in H^2(\R)$, and
 \[
 A_{0,0}(0)[c](x)=\PV\int_\R\frac{ \delta_{[x,y]}c }{y} \, dy=-\PV\int_\R\frac{ c(x-y) }{y} \, dy=-\pi Hc(x),
 \]
 where $H$ denotes the Hilbert transform \cite{St93}.
\end{rem}

\begin{rem}\label{R2} 
In the proof of Lemma \ref{L21} we split the operator  $A_{m,n}:=A_{m,n}(a_1, \ldots, a_{n+1})$ into two operators
 \[
 A_{m,n}=A_{m,n}^1-A_{m,n}^2.
 \]
If we keep   $b_1,\ldots, b_{m}$ fixed, then $A_{m,n}^1$ is   a multiplication type operator
 \[
 A_{m,n}^1[b_1, \ldots,b_{m}, c](x):=c(x)\PV\int_\R\frac{1}{y}\frac{\prod_{i=1}^{m}\big(\delta_{[x,y]} b_i/y\big)}{\prod_{i=1}^{n+1}\big[1+\big(\delta_{[x,y]} a_i /y\big)^2\big]} \, dy,
 \]
 while $A_{m,n}^2$ is the singular integral operator 
\[
 A_{m,n}^2[b_1, \ldots,b_{m}, c](x):=\PV\int_\R  K(x,y)c(x-y) \, dy,
 \]
  with the kernel $K$ defined by 
 \[
 K(x,y):=\frac{1}{y}\cfrac{\prod_{i=1}^{m}\big(\delta_{[x,y]} b_i / y\big)}{\prod_{i=1}^{n+1}\big[1+\big( \delta_{[x,y]} a_i/y\big)^2\big]}\qquad \text{for $x\in\R$, $y\neq0$}. 
\]
Our proof shows that both operators $A_{m,n}^i, 1\leq i\leq 2,$ satisfy \eqref{DE}.
While the    boundedness of $A_{m,n}^1$ follows by direct computation, the boundedness of $A_{m,n}^2$ follows from the estimate on the norm of operator defined in \eqref{FFF} and an argument due to 
  Calder\'on as it appears in the proof of  \cite[Theorem 9.7.11]{CM97}.
In fact, the arguments in the proof of Lemma \ref{L21} show that given Lipschitz functions $a_1,\ldots, a_{n+m}:\R\to\R$  the singular integral operator 
\[
B_{n,m}(a_1,\ldots, a_{n+m})[h](x):=\PV\int_\R  \frac{h(x-y)}{y}\cfrac{\prod_{i=1}^{n}\big(\delta_{[x,y]} a_i /y\big)}{\prod_{i=n+1}^{n+m}\big[1+\big(\delta_{[x,y]}  a_i /y\big)^2\big]}\, dy ,
\]
belongs to $\kL(L_2(\R))$ and $\|B_{n,m}(a_1,\ldots, a_{n+m})\|_{\kL(L_2(\R))}\leq C\prod_{i=1}^{n} \|a_i'\|_\infty$ where $C$ is a constant depending only   on $n, m$ and $\max_{i=n+1,\ldots, n+m}\|a_i'\|_{\infty}.$

It is worth to point out that $B_{0,0}=B_{0,1}(0)=\pi H.$
\end{rem}

\begin{proof}[Proof of Lemma \ref{L21}]  
The multilinear operator $A_{m,n}^1$ is bounded provided that the mapping
\[
\Big[x\mapsto \PV\int_\R\frac{1}{y}\frac{\prod_{i=1}^{m}\big(\delta_{[x,y]} b_i/y\big)}{\prod_{i=1}^{n+1}\big[1+\big(\delta_{[x,y]} a_i /y\big)^2\big]} \, dy\Big]
\]
belongs to $L_\infty(\R)$.
To establish this boundedness property we note that
\begin{align*}
\int\limits_{\delta<|y|<{1/\delta}}\frac{1}{y}\frac{\prod_{i=1}^{m}\big(\delta_{[x,y]} b_i/y\big)}{\prod_{i=1}^{n+1}\big[1+\big(\delta_{[x,y]} a_i /y\big)^2\big]} \, dy
 =&\int_\delta^{1/\delta}\frac{1}{y}\frac{\prod_{i=1}^{m}\big(\delta_{[x,y]} b_i/y\big)}{\prod_{i=1}^{n+1}\big[1+\big(\delta_{[x,y]} a_i /y\big)^2\big]}\\[1ex]
 &-\frac{1}{y}\frac{\prod_{i=1}^{m}\big(-\delta_{[x,-y]} b_i/y\big)}{\prod_{i=1}^{n+1}\big[1+\big(\delta_{[x,-y]} a_i /y\big)^2\big]} \, dy\\[1ex]
  =:&\int_\delta^{1/\delta} I(x,y)\, dy, 
  \end{align*}
  for $\delta\in(0,1)$ and $x\in\R$, where   
  \begin{align*}
 I(x,y):=&\frac{1}{y}\frac{1}{\prod_{i=1}^{n+1}\big[1+\big(\delta_{[x,y]} a_i /y\big)^2\big]}\Big(\prod_{i=1}^{m}\big(\delta_{[x,y]} b_i/y \big)-  \prod_{i=1}^{m}\big(-\delta_{[x,-y]}b_i/y\big)\Big)\\[1ex]
 &+ \frac{1}{y}\Big(\prod_{i=1}^{m}\big(-\delta_{[x,-y]} b_i/y \big)\Big) 
 \frac{\prod_{i=1}^{n+1}\big[1+\big(\delta_{[x,-y]} a_i /y\big)^2\big]-\prod_{i=1}^{n+1}\big[1+\big(\delta_{[x,y]} a_i /y\big)^2\big]}{\prod_{i=1}^{n+1}\big[1+\big(\delta_{[x,y]} a_i /y\big)^2\big]\big[1+\big(\delta_{[x,-y]} a_i /y\big)^2\big]}.
\end{align*}
We further have
\begin{align*}
 &\hspace{-1cm}\frac{1}{y} \Big(\prod_{i=1}^{m}\big(\delta_{[x,y]} b_i/y \big)-  \prod_{i=1}^{m}\big(-\delta_{[x,-y]}b_i/y\big)\Big)=\frac{1}{y}\prod_{i=1}^{m}\frac{b_i(x)-b_i(x-y)}{y}-   \frac{1}{y}\prod_{i=1}^{m}\frac{b_i(x+y)-b_i(x)}{y}\\[1ex]
 =&-\sum_{i=1}^{m}\frac{b_i(x+y)-2b_i(x)+b_i(x-y)}{y^2}\Big[\prod_{j=1}^{i-1}\big(\delta_{[x,y]} b_j/y \big)\Big] \Big[\prod_{j=i+1}^{m}\big(-\delta_{[x,-y]} b_{j}/y\big)\Big],
\end{align*}
and similarly
\begin{align*}
 &\hspace{-1cm}\frac{1}{y}\Big( \prod_{i=1}^{n+1}\big[1+\big(\delta_{[x,-y]} a_i /y\big)^2\big]-\prod_{i=1}^{n+1}\big[1+\big(\delta_{[x,y]} a_i /y\big)^2\big]\Big)\\[1ex]
 =&\sum_{i=1}^{n+1}\Big[\prod_{j=1}^{i-1}\big[1+\big(\delta_{[x,-y]} a_j /y\big)^2\big] \Big[\prod_{j=i+1}^{n+1}\big[1+\big(\delta_{[x,y]} a_j /y\big)^2\big]\Big]\\[1ex]
 &\hspace{0.85cm}\times\frac{a_i(x+y)-a_i(x-y)}{y}\frac{a_i(x+y)-2a_i(x)+a_i(x-y)}{y^2}.
\end{align*}
Let us now observe that
\begin{align}\label{E1}
 |I(x,y)|\leq & \frac{2^{m+1}[1+4(n+1)\max_{i=1,\ldots,n+1}\|a_i\|_\infty^2]}{y^{2}} \prod_{i=1}^{m}\|b_i\|_\infty\qquad \text{for  $x\in\R$, $y\geq1$.}
\end{align}
Furthermore, since $r-1/2\in (1,2)$, we find, by taking advantage of $ H^r(\R)\hookrightarrow {\rm BC}^{r-1/2}(\R) $, that
 \begin{align}\label{MM}
  \frac{|f(x+y)-2f(x)+f(x-y)|}{y^{r-1/2}}\leq 4[f']_{r-3/2}\leq C \|f\|_{H^r} \qquad\text{for all $f\in H^r(\R)$, $x\in\R$, $y>0$,}
\end{align}
cf. \cite[Relation (0.2.2)]{L95}. Here $[\,\cdot\,]_{r-3/2}$ denotes the usual H\"older seminorm.  
Using \eqref{MM}, it follows that 
\begin{align}\label{E2}
 |I(x,y)|\leq & Cy^{r-5/2}\left[\sum_{i=1}^{m} \Big(\|b_i\|_{H^r}\prod_{{j=1, j\neq i}}^{m}\|b_j'\|_\infty\Big)+ \Big(\prod_{i=1}^{m}\|b_i'\|_\infty\Big)\sum_{i=1}^{n+1}\|a_i'\|_\infty\|a_i\|_{H^r}\right] 
\end{align}
for $x\in\R$, $y\in(0,1)$.
Combining  \eqref{E1} and \eqref{E2} yields   
\[
 \sup_{x\in\R}\int_0^\infty |I(x,y)|\, dy\leq C \prod_{ i=1}^{m}\|b_i\|_{H^r},
\]
  where $C$ depends only of $r, $ $n,$ $m,$ and $\max_{i=1,\ldots, n+1}\|a_i\|_{H^r}$. The latter estimate shows that \eqref{DE} is satisfied when replacing $A_{m,n}$ by $A_{m,n}^1$.

To deal with $A_{m,n}^2,$ we define the functions $F:\R^{n+m+1}\to\R$ and $A:\R\to\R^{n+m+1}$  by 
\[
F(u_1,\ldots, u_{n+1} , v_1, \ldots, v_{m})=\frac{\prod_{i=1}^{m}v_i}{\prod_{i=1}^{n+1}(1+u_i^2)}\qquad\text{and}\qquad A :=(a_1,\ldots, a_{n+1},b_1,\ldots, b_{m}),
\]
where $b_i\in H^r(\R) $ satisfy $\|b_i'\|_\infty\leq 1, 1\leq i\leq m.$
The function $F$ is smooth and $A$ is Lipschitz continuous  with a Lipschitz constant $L:=  \sqrt{m+(n+1)\max_{i=1,n+1}\|a_i'\|_{\infty}^2}\geq \|A'\|_\infty$. 
We further observe that
\[
K(x,y)=\frac{1}{y}F\Big(\frac{\delta_{[x,y]}A}{y}\Big)
\]
with $|\delta_{[x,y]}A/y|\leq L.$
Let $\wt F$ be a smooth function on $\R^{n+m+1}$ which is $4L$-periodic in each variable and which matches $F$ on $[-L,L]^{n+m+1}$.
Expanding $\wt F$ by its  Fourier series 
\[
\wt F=\sum_{p\in \Z^{n+m+1}}\alpha_p e^{i(\pi/2L)\langle p\,  | \, \cdot\, \rangle },
\]
the associated sequence $(\alpha_p)_p$ is   rapidly decreasing.
Furthermore, we can write the kernel $K$ as follows
\[
K(x,y)=\sum_{p\in \Z^{n+m+1}}\alpha_p K_p(x,y), \qquad x\in\R,\,  y\neq 0,
\]
with 
\[
K_p(x,y):=\frac{1}{y}\exp\Big(i\frac{\pi}{2L}\frac{\delta_{[x,y]}\langle p |   A\rangle}{y}\Big),\qquad x\in\R,\, y\neq 0,\, p\in \Z^{n+m+1}.
\]
The kernels $K_p, p\in\Z^{n+m+1},$ define  operators in $\kL(L_2(\R))$ of the type \eqref{FFF} and  with norms bounded by 
$$C\Big(1+\frac{\pi}{2L}|p|\|A'\|_\infty\Big)\leq C (1+|p|) ,\qquad   p\in \Z^{n+m+1},$$
with a universal constant $C$ independent of $p$.
Hence, the associated series is absolutely convergent in  $\kL(L_2(\R))$, meaning that the operator 
$A_{m,n}^2(a_1, \ldots, a_{n+1})[b_1, \ldots,b_{m}, \cdot]$ belongs to $\kL(L_2(\R))$ and
\[
\|A_{m,n}^2(a_1, \ldots, a_{n+1})[b_1, \ldots,b_{m}, c]\|_{2}\leq C\big(n,m, \max_{i=1,\ldots, n+1}\|a_i'\|_{\infty}\big)\|c\|_2
\]
for all $c\in L_2(\R)$ and for all $b_i\in H^r(\R) $ that satisfy $\|b_i'\|_\infty\leq 1.$
The desired estimate  \eqref{DE} follows now by using the linearity of $A_{m,n}^2$ in each argument.
The claim $(i)$ is now obvious.

Concerning $(ii)$, we note that
\begin{align*}
 &\hspace{-1cm}A_{m,n}(\wt a_{1}, \ldots, \wt a_{n+1})[b_1,\ldots, b_{m},c]-  A_{m,n}(a_1, \ldots, a_{n+1})[b_1,\ldots, b_{m},c]\\[1ex]
 &=\sum_{i=1}^{n+1} A_{m+2,n+1}(\wt a_{1},\ldots, \wt a_{i},a_i,\ldots \ldots, a_{n+1})[a_i+\wt a_{i}, a_i-\wt a_{i},b_1,\ldots, b_{m},c],
\end{align*}
and the desired assertion follows now from $(i)$.
\end{proof}\medskip

 We consider  once more   the operators $A_{m,n}$ defined in Lemma \ref{L31} in the case when $m\geq1$, but defined on a different Hilbert space product where a weaker regularity of $b_m$ is balanced by a higher regularity of the variable $c$.
 The estimates in Lemma \ref{L21'} are slightly related to the ones announced  in \cite[Theorem 4]{CC78}  and, excepting for the later reference, we did not find similar results. 
 
  \begin{lemma}\label{L21'} Let $n\in\N$, $1\leq m\in  \N$, $r\in(3/2,2)$, $\tau\in(5/2-r,1)$, and $a_1,  \ldots, a_{n+1}\in H^r(\R)$ be given.
Then:
\begin{itemize}
 \item[$(i)$] There exists a constant $C$, depending only on $r $  and $\tau$    such that  
\begin{equation*}
\big\|A_{m,n}(a_1, \ldots, a_{n+1})[b_1, \ldots,b_{m}, c]\big\|_{2}\leq C\|c\|_{H^\tau}\|b_m\|_{H^{r-1}}\prod_{i=1}^{m-1}\|b_i'\|_{\infty}
\end{equation*}
for all $ b_1,\ldots,b_{m}\in H^r(\R) $ and all $c\in H^1(\R)$. In particular, $A_{m,n}(a_1, \ldots, a_{n+1})$ extends to a bounded operator 
$$A_{m,n}(a_1, \ldots, a_{n+1})\in\kL_{m+1}((H^r(\R))^{m-1}\times H^{r-1}(\R)\times H^\tau(\R), L_2(\R)).$$
\item[$(ii)$] $A_{m,n}\in C^{1-}((H^r(\R))^{n+1},\kL_{m+1}((H^r(\R))^{m-1}\times H^{r-1}(\R)\times H^\tau(\R), L_2(\R))$.
\end{itemize}
\end{lemma}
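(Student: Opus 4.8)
The plan is to prove part $(i)$ by a direct majorisation, rather than by the harmonic analysis behind Lemma~\ref{L21} (the Calder\'on argument and the estimate \eqref{FFF}). The point is that the denominator $\prod_{i=1}^{n+1}\big[1+(\delta_{[x,y]}a_i/y)^2\big]$ is always $\ge 1$ and may simply be discarded, while $H^r(\R)\hookrightarrow W^1_\infty(\R)$ for $r>3/2$ gives $|\delta_{[x,y]}b_i/y|\le\|b_i'\|_\infty$ uniformly in $x$ and $y$. Since for $c\in H^1(\R)$ and $b_i\in H^r(\R)$ the integrand is in fact absolutely integrable, we obtain
\[
\big|A_{m,n}(a_1,\ldots,a_{n+1})[b_1,\ldots,b_m,c](x)\big|\le\Big(\prod_{i=1}^{m-1}\|b_i'\|_\infty\Big)\int_\R\frac{|\delta_{[x,y]}b_m|\,|\delta_{[x,y]}c|}{y^2}\,dy,
\]
so it remains to bound the $L_2(dx)$-norm of $x\mapsto\int_\R y^{-2}|\delta_{[x,y]}b_m|\,|\delta_{[x,y]}c|\,dy$, with a constant that, notably, will not depend on $n$ or on the $a_i$.

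\paragraph{}
For this I would apply Minkowski's inequality to pull the $L_2(dx)$-norm inside the $y$-integral, and then, for each fixed $y$, use $\big\|\,|\delta_{[\cdot,y]}b_m|\,|\delta_{[\cdot,y]}c|\,\big\|_{L_2}\le\|\delta_{[\cdot,y]}b_m\|_{L_\infty}\,\|\delta_{[\cdot,y]}c\|_{L_2}$. The first factor is estimated pointwise via $H^{r-1}(\R)\hookrightarrow {\rm BC}^{r-3/2}(\R)$ (legitimate since $r-3/2\in(0,1/2)$), giving $\|\delta_{[\cdot,y]}b_m\|_{L_\infty}\le C\min\{1,|y|^{r-3/2}\}\,\|b_m\|_{H^{r-1}}$; the second is estimated in mean square, $\|\delta_{[\cdot,y]}c\|_{L_2}=\|c-c(\cdot-y)\|_{L_2}\le C\min\{1,|y|^{\tau}\}\,\|c\|_{H^{\tau}}$, which is immediate on the Fourier side. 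Splitting $\int_\R=\int_{|y|<1}+\int_{|y|\ge1}$, the tail is controlled by $\int_{|y|\ge1}y^{-2}\,dy$, while the near-origin part is controlled by $\int_0^1|y|^{\,r-3/2+\tau-2}\,dy=\int_0^1|y|^{\,r+\tau-7/2}\,dy$, which converges precisely because $\tau>5/2-r$ forces the exponent into $(-1,-1/2)$. This yields the asserted estimate with $C=C(r,\tau)$; and since moreover $\|b_i'\|_\infty\le C\|b_i\|_{H^r}$, the very same computation bounds $A_{m,n}(a_1,\ldots,a_{n+1})$ with respect to the $(H^r)^{m-1}\times H^{r-1}\times H^\tau$ norms, which gives the claimed bounded extension by density.

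\paragraph{}
For $(ii)$ I would reuse the purely algebraic telescoping identity already used in the proof of Lemma~\ref{L21},
\[
A_{m,n}(\wt a)[b_1,\ldots,b_m,c]-A_{m,n}(a)[b_1,\ldots,b_m,c]=\sum_{i=1}^{n+1}A_{m+2,n+1}(\wt a_1,\ldots,\wt a_i,a_i,\ldots,a_{n+1})\big[a_i+\wt a_i,\,a_i-\wt a_i,\,b_1,\ldots,b_m,\,c\big],
\]
and estimate each summand by part $(i)$: the last ``$b$''-argument is still $b_m\in H^{r-1}$, the new arguments $a_i\pm\wt a_i$ occupy ordinary $H^r$-slots, and $\|(a_i\pm\wt a_i)'\|_\infty\le C\|a_i\pm\wt a_i\|_{H^r}$. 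Summing over $i$ and using $\sum_i\|a_i+\wt a_i\|_{H^r}\|a_i-\wt a_i\|_{H^r}\le C(\|a\|+\|\wt a\|)\|a-\wt a\|$ on bounded subsets of $(H^r(\R))^{n+1}$ then yields the local Lipschitz continuity of $a\mapsto A_{m,n}(a)$ into $\kL_{m+1}((H^r)^{m-1}\times H^{r-1}\times H^\tau,L_2)$.

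\paragraph{}
The one genuinely delicate point is the exponent bookkeeping in the second step: one must resist estimating $\delta_{[x,y]}c$ pointwise through the H\"older embedding of $H^\tau$ --- that would only produce $|y|^{\tau-1/2}$ and require $\tau>3-r$, which is incompatible with $\tau<1$ --- and instead keep $c$ in $L_2(dx)$, letting Minkowski's inequality trade the missing half-derivative for integrability of the $y$-integral; the condition $\tau>5/2-r$ is exactly the threshold at which this exchange succeeds. The remaining verifications (absolute convergence of the principal value for a.e.\ $x$ on the dense class, the independence of the constant from $n$ and from the $a_i$, and part $(ii)$) are then routine.
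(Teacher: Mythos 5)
Your proof is correct and follows essentially the same route as the paper's: drop the denominator (it is $\ge 1$), absorb the factors $\delta_{[x,y]}b_i/y$ for $i<m$ into $\|b_i'\|_\infty$, apply Minkowski's integral inequality, estimate $\delta_{[\cdot,y]}b_m$ in $L_\infty$ via $H^{r-1}\hookrightarrow{\rm BC}^{r-3/2}$, estimate $\delta_{[\cdot,y]}c$ in $L_2$ on the Fourier side to gain $\min\{1,|y|^\tau\}$, and observe that $\tau>5/2-r$ is exactly what makes the $y$-integral converge near the origin; part $(ii)$ via the telescoping identity is also how the paper reduces it to $(i)$. The only cosmetic point is the parenthetical claim that the integrand is absolutely integrable for $c\in H^1$ — pointwise in $x$ this need not hold for every $x$ (only a.e.), but Minkowski's inequality does not require it, so the argument is unaffected.
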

\begin{proof}
 The claim $(ii)$ is again a direct consequence of $(i)$, so that we are left to prove the first claim. 
 To this end we write
 \begin{align*}
 A_{m,n}(a_1, \ldots, a_{n+1})[b_1, \ldots,b_{m}, c](x)= \int_\R K(x,y)\, dy,
 \end{align*}
 where
\begin{align*}
 K(x,y):=\frac{\prod_{i=1}^{m-1}\big(\delta_{[x,y]} b_i/y\big )}{\prod_{i=1}^{n+1}\big[1+\big(\delta_{[x,y]} a_i /y\big)^2\big]}\frac{ \delta_{[x,y]}b_m}{y}\frac{ \delta_{[x,y]}c}{y} \qquad\text{for $x\in\R,$ $ y\neq0$.}
\end{align*}
Using Minkowski's integral inequality, we  compute 
\begin{align*}
 \Big(\int_\R\Big|\int_\R K(x,y)\, dy\Big|^2\, dx\Big)^{1/2}\leq &  \int_\R\Big(\int_\R |K(x,y)|^2\, dx\Big)^{1/2}\, dy,
\end{align*}
and exploiting  the fact $H^{r-1}(\R)\hookrightarrow {\rm BC}^{r-3/2}(\R)$, we get
\begin{align*}
 \int_\R |K(x,y)|^2\, dx\leq &\frac{C }{y^{7-2r}} \|b_m\|_{H^{r-1}}^2\Big(\prod_{i=1}^{m-1}\|b_i'\|_{\infty}^2\Big)\int_\R|c-\tau_yc|^2\, dx\\[1ex]
 =& \frac{C }{y^{7-2r}} \|b_m\|_{H^{r-1}}^2\Big(\prod_{i=1}^{m-1}\|b_i'\|_{\infty}^2\Big)\int_\R|\mathcal{F}c(\xi)|^2|e^{iy\xi}-1|^2\, d\xi.
\end{align*}
Since
\[
|e^{iy\xi}-1|^2\leq  C\big[(1+|\xi|^2)^{\tau}y^{2\tau} {\bf 1}_{(-1,1)}(y)+ {\bf 1}_{[|y|\geq1]}(y)\big],\qquad y,\xi\in\R,
\]
it follows that
\[
\int_\R |K(x,y)|^2\, dx\leq C\|c\|_{H^\tau}^2\|b_m\|_{H^{r-1}}^2\Big(\prod_{i=1}^{m-1}\|b_i'\|_{\infty}^2\Big)  \Big[y^{2(r+\tau)-7} {\bf 1}_{(-1,1)}(y)+ \frac{1}{y^{7-2r}} {\bf 1}_{[|y|\geq1]}(y)\Big],
\]
and we conclude that
\begin{align*}
 \int_\R\Big(\int_\R |K(x,y)|^2\, dx\Big)^{1/2}\, dy\leq C\|c\|_{H^\tau} \|b_m\|_{H^{r-1}} \prod_{i=1}^{m-1}\|b_i'\|_{\infty}.
\end{align*}
The claim $(i)$ follows at once.
\end{proof}

 %%%%%%%%%%%%%%%%%%%%%%%%%%%%%%%%%%%%%%%%%%%%%%%%%%%%%%%%%%%%%%%%%%%
%%%%%%%%%%%%%%%%%%%%%%%%%%%%%%%%%%%%%%%%%%%%%%%%%%%%%%%%%%%%%%%%%%%%
%%%%%%%%%%%%%%%%%%%%%%%%%%%%%%%%%%%%%%%%%%%%%%%%%%%%%%%%%%%%%%%%%%%%
\paragraph{\bf Mapping properties}  
%%%%%%%%%%%%%%%%%%%%%%%%%%%%%%%%%%%%%%%%%%%%%%%%%%%%%%%%%%%%%%%%%%%
%%%%%%%%%%%%%%%%%%%%%%%%%%%%%%%%%%%%%%%%%%%%%%%%%%%%%%%%%%%%%%%%%%%%
%%%%%%%%%%%%%%%%%%%%%%%%%%%%%%%%%%%%%%%%%%%%%%%%%%%%%%%%%%%%%%%%%%%%
We now use the Lemmas \ref{L21} and \ref{L21'} to prove that  the mapping $\Phi $ defined by \eqref{opa} is  well-defined and locally Lipschitz continuous as an operator
from  $H^s(\R)$ into the Banach space $\kL(H^2(\R), H^1(\R))$ for each $s\in(3/2,2).$

\begin{lemma}\label{L31}
Given  $ s\in(3/2,2)$, it holds that
\begin{align*} 
 \Phi\in C^{1-}(H^s(\R),\kL(H^2(\R), H^1(\R))).
\end{align*}
\end{lemma}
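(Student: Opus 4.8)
The plan is to reduce the statement to the multilinear operators $A_{m,n}$ estimated in Lemmas~\ref{L21} and~\ref{L21'}, taking $r=s$ throughout and fixing once and for all some $\tau\in(5/2-s,1)$, an interval which is nonempty because $s>3/2$. By Remark~\ref{R1} we have $\Phi(f)[h]=A_{0,0}(f)[h']$, so $\Phi(f)$ is the composition of $A_{0,0}(f)$ with the bounded operator $\partial_x\in\kL(H^2(\R),H^1(\R))$; in particular the $L_2$-part of the estimate is immediate from Lemma~\ref{L21}$(i)$ with $m=n=0$:
\[
\|\Phi(f)[h]\|_2=\|A_{0,0}(f)[h']\|_2\leq C\big(\|f\|_{H^s}\big)\|h'\|_2\leq C\big(\|f\|_{H^s}\big)\|h\|_{H^2}.
\]

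The core of the argument is to identify and estimate the derivative of $\Phi(f)[h]$. Writing $u:=\delta_{[x,y]}f/y$ and using $\partial_x\delta_{[x,y]}g=\delta_{[x,y]}g'$ together with $\partial_x(1+u^2)^{-1}=-2u(\partial_xu)(1+u^2)^{-2}$, a formal differentiation under the principal value integral gives
\[
\partial_x\big(\Phi(f)[h]\big)=A_{0,0}(f)[h'']-2\,A_{2,1}(f,f)[f,f',h'],
\]
the first term coming from differentiating the numerator $\delta_{[x,y]}h'/y$ and the second from differentiating the factor $(1+u^2)^{-1}$. I would justify this identity by a standard density/mollification argument: it holds for $h$ in the Schwartz class (truncate the principal value to $\varepsilon<|y|<1/\varepsilon$, differentiate, let $\varepsilon\to0$), both sides depend $L_2$-continuously on $h\in H^2(\R)$ by the bounds below, and $\partial_x$ is closed in $L_2(\R)$. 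For the two terms: Lemma~\ref{L21}$(i)$ with $m=n=0$ gives $\|A_{0,0}(f)[h'']\|_2\leq C(\|f\|_{H^s})\|h''\|_2$; for the second term I apply Lemma~\ref{L21'}$(i)$ with $n=1$, $m=2$, $a_1=a_2=f$, $b_1=f$, $b_2=f'$ and $c=h'$, noting that the slot $b_m=f'\in H^{s-1}(\R)$ is exactly where the weaker-regularity hypothesis of that lemma is used, while $c=h'\in H^1(\R)\hookrightarrow H^\tau(\R)$ since $\tau<1$. Using also $H^s(\R)\hookrightarrow\mathrm{BC}^{s-1/2}(\R)$ and $\|h'\|_{H^\tau}\leq\|h\|_{H^{1+\tau}}\leq\|h\|_{H^2}$,
\[
\|A_{2,1}(f,f)[f,f',h']\|_2\leq C\,\|h'\|_{H^\tau}\|f'\|_{H^{s-1}}\|f'\|_\infty\leq C\big(\|f\|_{H^s}\big)\|h\|_{H^2}.
\]
Combining the displays shows $\Phi(f)[h]\in H^1(\R)$ with $\|\Phi(f)[h]\|_{H^1}\leq C(\|f\|_{H^s})\|h\|_{H^2}$, i.e. $\Phi(f)\in\kL(H^2(\R),H^1(\R))$ with operator norm bounded on bounded subsets of $H^s(\R)$.

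For the local Lipschitz continuity of $f\mapsto\Phi(f)\in\kL(H^2(\R),H^1(\R))$ I would use the $C^{1-}$-assertions $(ii)$ of Lemmas~\ref{L21} and~\ref{L21'}. Indeed $\Phi(f)[h]-\Phi(\tilde f)[h]=(A_{0,0}(f)-A_{0,0}(\tilde f))[h']$, controlled in $L_2$ by Lemma~\ref{L21}$(ii)$, while its derivative equals $(A_{0,0}(f)-A_{0,0}(\tilde f))[h'']-2\big(A_{2,1}(f,f)[f,f',h']-A_{2,1}(\tilde f,\tilde f)[\tilde f,\tilde f',h']\big)$; expanding the last difference by telescoping one argument at a time and using multilinearity, each resulting term is estimated either by Lemma~\ref{L21'}$(i)$ (for the differences in the $b$-slots, using $\|f'-\tilde f'\|_\infty+\|f'-\tilde f'\|_{H^{s-1}}\leq C\|f-\tilde f\|_{H^s}$) or by Lemma~\ref{L21'}$(ii)$ (for the difference in the two $a$-slots), all uniformly for $f,\tilde f$ ranging over a fixed bounded subset of $H^s(\R)$. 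This yields $\|\Phi(f)-\Phi(\tilde f)\|_{\kL(H^2,H^1)}\leq C\|f-\tilde f\|_{H^s}$ locally, completing the proof. I expect the only genuinely delicate point to be the rigorous justification of differentiation under the principal-value sign; once the formula for $\partial_x(\Phi(f)[h])$ is in hand, everything reduces to bookkeeping with the two multilinear lemmas.
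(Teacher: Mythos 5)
Your proof is correct and lands on exactly the same decomposition as the paper's: the $L_2$-bound via Lemma~\ref{L21}, the identity $(\Phi(f)[h])'=A_{0,0}(f)[h'']-2A_{2,1}(f,f)[f,f',h']$, the use of Lemma~\ref{L21'} with the lowest-regularity slot occupied by $f'\in H^{s-1}$, and Lipschitz continuity from the $C^{1-}$-parts of both lemmas with a telescoping argument. The one place you genuinely diverge from the paper is in justifying the derivative formula. You propose: prove it for Schwartz $h$ by truncating the principal value and passing to the limit, then extend to $H^2$ by density plus closedness of $\partial_x$ in $L_2$. The paper instead works with the $C_0$-group of translations $\{\tau_\e\}$ and the exact multilinear identity
\[
 \frac{\tau_\e(\Phi(f)[h])-\Phi(f)[h]}{\e}=A_{0,0}(\tau_\e f)\Big[\frac{\tau_\e h'-h'}{\e}\Big]-A_{2,1}(\tau_\e f,f)\Big[\tau_\e f+f,\frac{\tau_\e f-f}{\e},h'\Big],
\]
and then sends $\e\to0$ using the convergences $\tau_\e f\to f$ in $H^s$, $(\tau_\e f-f)/\e\to -f'$ in $H^{s-1}$, and $(\tau_\e h-h)/\e\to -h'$ in $H^1$, all fed back into the \emph{already proved} boundedness of $A_{0,0}$ and $A_{2,1}$. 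The payoff is that it never needs to differentiate under a principal-value integral at all, and so sidesteps exactly the ``genuinely delicate point'' you flag at the end: establishing your formula for Schwartz $h$ still requires a uniform-in-$\e$ argument for the truncated version of $A_{2,1}$, because the factor $\delta_{[x,y]}f'/y$ is not locally bounded when $f\in H^s$ with $s<2$. That step can be filled in (e.g.\ by the Minkowski-inequality bound from the proof of Lemma~\ref{L21'} applied to the truncated kernels), but it is real work that your sketch leaves implicit, whereas the translation argument reduces the whole matter to the operator bounds you already have.
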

\begin{proof}
We first prove that $\Phi(f)\in \kL(H^2(\R), H^1(\R))$ for each $f\in H^s(\R) $.
Remark \ref{R1} and Lemma \ref{L21} (with $r=s$)  yield that $\Phi(f)\in\kL(H^2(\R), L_2(\R)).$
In order to establish that  $\Phi(f)[h]\in H^1(\R)$, we  let   $\{\tau_\e\}_{\e\in\R}$ denote the    $C_0$-group of right translations on $L_2(\R)$, that is $\tau_\e f(x):=f(x-\e)$ for $f\in L_2(\R)$ and $x,$ $\e\in\R$.
Given $\e\in(0,1)$, it holds that
\begin{align*}
 \frac{\tau_\e(\Phi(f)[h])-\Phi(f)[h]}{\e}=& \frac{\tau_\e(A_{0,0}[f][h'])-A_{0,0}(f)[h']}{\e}=  \frac{ A_{0,0}(\tau_\e f)[\tau_\e h'] -A_{0,0}(f)[h']}{\e}\\[1ex]
 =&   A_{0,0}(\tau_\e f)\Big[\frac{\tau_\e h'-h'}{\e}\Big]-A_{2,1}(\tau_\e f,f)\Big[\tau_\e f+f,\frac{\tau_\e f-f}{\e}, h'\Big]
\end{align*}
and the convergences
\[
  \tau_\e f  \underset{\e\to0}{\longrightarrow}  f  \quad\text{in $H^{s }(\R),$}\qquad \frac{ \tau_\e f - f }{\e}\underset{\e\to0}{\longrightarrow} -f' \quad\text{in $H^{s-1}(\R),$ }\qquad \frac{ \tau_\e h - h }{\e}\underset{\e\to0}{\longrightarrow} -h'  \quad\text{in $H^1(\R),$ }
\]
together with the  Lemma \ref{L21} (with $r=s$) and Lemma \ref{L21'} (with $r=s$, $\tau\in(5/2-s,1)$) imply that $\Phi(f)[h]\in H^1(\R)$ and
\begin{align} 
 (\Phi(f)[h])'=A_{0,0}(f)[h'']-2A_{2,1}(f,f)[f,f',h'].\label{N1}
\end{align}
This proves that $\Phi(f)\in \kL(H^2(\R), H^1(\R))$.
Finally, the local Lipschitz continuity of $\Phi$  follows  from the local Lipschitz continuity properties established in the Lemmas \ref{L21} and \ref{L21'}.
\end{proof}

 %%%%%%%%%%%%%%%%%%%%%%%%%%%%%%%%%%%%%%%%%%%%%%%%%%%%%%%%%%%%%%%%%%%
%%%%%%%%%%%%%%%%%%%%%%%%%%%%%%%%%%%%%%%%%%%%%%%%%%%%%%%%%%%%%%%%%%%%
%%%%%%%%%%%%%%%%%%%%%%%%%%%%%%%%%%%%%%%%%%%%%%%%%%%%%%%%%%%%%%%%%%%%
%%%%%%%%%%%%%%%%%%%%%%%%%%%%%%%%%%%%%%%%%%%%%%%%%%%%%%%%%%%%%%%%%%%
%%%%%%%%%%%%%%%%%%%%%%%%%%%%%%%%%%%%%%%%%%%%%%%%%%%%%%%%%%%%%%%%%%%%
%%%%%%%%%%%%%%%%%%%%%%%%%%%%%%%%%%%%%%%%%%%%%%%%%%%%%%%%%%%%%%%%%%%%
\section{The Muskat problem without surface tension: the generator property}\label{S3}
 %%%%%%%%%%%%%%%%%%%%%%%%%%%%%%%%%%%%%%%%%%%%%%%%%%%%%%%%%%%%%%%%%%%
%%%%%%%%%%%%%%%%%%%%%%%%%%%%%%%%%%%%%%%%%%%%%%%%%%%%%%%%%%%%%%%%%%%%
%%%%%%%%%%%%%%%%%%%%%%%%%%%%%%%%%%%%%%%%%%%%%%%%%%%%%%%%%%%%%%%%%%%%
%%%%%%%%%%%%%%%%%%%%%%%%%%%%%%%%%%%%%%%%%%%%%%%%%%%%%%%%%%%%%%%%%%%
%%%%%%%%%%%%%%%%%%%%%%%%%%%%%%%%%%%%%%%%%%%%%%%%%%%%%%%%%%%%%%%%%%%%
%%%%%%%%%%%%%%%%%%%%%%%%%%%%%%%%%%%%%%%%%%%%%%%%%%%%%%%%%%%%%%%%%%%%

We now fix $f\in H^s(\R), s\in(3/2,2).$
The goal of this section is to prove that $\Phi(f)$, regarded as an unbounded operator in $H^1(\R)$ with definition domain $H^2(\R)$, is 
the generator of a strongly continuous and analytic semigroup in $\kL(H^1(\R)),$
that is
\[
-\Phi(f)\in \kH(H^2(\R), H^1(\R)). 
\]
In order to establish this property we first approximate locally the operator $\Phi(f)$, in a sense to be made precise in Theorem \ref{T1}, by Fourier multipliers and carry then the desired generator property, 
which we establish for the Fourier multipliers, 
back to the original operator, cf. Theorem \ref{T2}. 
A similar approach was followed in the papers \cite{E94, EMW18, ES95,  ES97} in the context of spaces of continuous functions.
The situation here is different as we consider Sobolev spaces on the line. The method though can be adapted to this setting after exploiting the structure of the  operator $\Phi(f)$, 
especially the fact that the functions $f$ and $f'$  vanish both  at infinity.
As a result  of this decay  property we can use localization families with a finite number of elements, and this fact enables us to introduce for each localization family an equivalent 
norm on the Sobolev spaces $H^k(\R), k\in\N,$ which is suitable for the further analysis, cf. Lemma \ref{L:EN2}.   
We start by choosing  for each $\e\in(0,1)$, a finite $\e$-localization family, that is  a family  
\[\{\pi_j^\e\,:\, -N+1\leq j\leq N\}\subset  C^\infty_0(\R,[0,1]),\]
with $N=N(\e)\in\N $ sufficiently large, such that
\begin{align}
\bullet\,\,\,\, \,\,  & \text{$ \supp \pi_j^\e $ is an interval of length less or equal $\e$ for all $|j|\leq N-1$;}
\label{i}\\[1ex]
\bullet\,\,\,\, \,\, &\text{$ \supp \pi_{N}^\e\subset(-\infty,-x_{N}]\cup [x_N,\infty)$  and $x_{N}\geq\e^{-1}$;}
\label{ii}\\[1ex]
\bullet\,\,\,\, \,\, &\text{ $\supp \pi_j^\e\cap\supp \pi_l^\e=\emptyset$ if $[|j-l|\geq2, \max\{|j|, |l|\}\leq N-1]$ or $[|l|\leq N-2, j=N];$} \label{iii}\\[1ex]
\bullet\,\,\,\, \,\, &\text{ $\sum_{j=-N+1}^N(\pi_j^\e)^2=1;$}\label{iv}\\[1ex]
 \bullet\,\,\,\, \,\, &\text{$\|(\pi_j^\e)^{(k)}\|_\infty\leq C\e^{-k}$ for all $ k\in\N, -N+1\leq j\leq N$.}\label{v} 
\end{align} 
Such $\e$-localization families can be easily constructed.
Additionally, we choose for each $\e\in(0,1)$  a second family   \[\{\chi_j^\e\,:\, -N+1\leq j\leq N\}\subset  C^\infty_0(\R,[0,1])  \] with the following properties
\begin{align}
\bullet\,\,\,\, \,\,  &\text{$\chi_j^\e=1$ on $\supp \pi_j^\e$;}\label{c1}\\[1ex]
\bullet\,\,\,\, \,\,  &\text{$ \supp \chi_j^\e$ is an interval  of length less or equal $ 3\e$ for $|j|\leq N-1$;}\label{c2}\\[1ex]
\bullet\,\,\,\, \,\,  &\text{$\supp\chi_N^\e\subset [|x|\geq x_N-\e]$.} \label{c3}
\end{align} 

Each $\e$-localization family  $\{\pi_j^\e\,:\, -N+1\leq j\leq N\}$     defines   a  norm on $H^k(\R), k\in\N,$    which is equivalent to the standard $H^k$-norm.
\begin{lemma}\label{L:EN2}
Given  $\e\in(0,1)$, let  $\{\pi_j^\e\,:\, -N+1\leq j\leq N\}\subset  C^\infty_0(\R,[0,1])$ be a family with the properties \eqref{i}-\eqref{v}. 
Then, for each $k\in\N$, the mapping  
\[
\Big[h\mapsto \sum_{j=-N+1}^ N\|\pi_j^\e h\|_{H^k}\Big]:H^k(\R)\to[0,\infty)
\]
defines a norm on $H^k(\R) $ which is equivalent to the standard $H^k$-norm. 
\end{lemma}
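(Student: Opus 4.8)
The plan is to establish the two-sided inequality $c\|h\|_{H^k}\le \sum_{j}\|\pi_j^\e h\|_{H^k}\le C\|h\|_{H^k}$ for all $h\in H^k(\R)$, with constants possibly depending on $\e$ (and $k$) but not on $h$. The upper bound is the routine direction: since each $\pi_j^\e\in C^\infty_0(\R,[0,1])$ and, by \eqref{v}, all derivatives up to order $k$ are bounded (by $C\e^{-k}$), the Leibniz rule gives $\|\pi_j^\e h\|_{H^k}\le C(k,\e)\|h\|_{H^k}$ for each $j$, and summing over the finitely many indices $-N+1\le j\le N$ (here $N=N(\e)$ is fixed once $\e$ is fixed) yields $\sum_j\|\pi_j^\e h\|_{H^k}\le C(k,\e,N)\|h\|_{H^k}$.

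For the lower bound I would first reduce to controlling $\|h\|_{H^k}^2\sim\sum_{|\alpha|\le k}\|h^{(\alpha)}\|_2^2$ by $\sum_j\|\pi_j^\e h\|_{H^k}^2$, using that on a finite index set the $\ell^1$ and $\ell^2$ norms of the vector $(\|\pi_j^\e h\|_{H^k})_j$ are equivalent. The key input is the partition-of-unity identity \eqref{iv}: $\sum_j(\pi_j^\e)^2\equiv 1$. From this, $h=\sum_j(\pi_j^\e)^2 h$, hence for the zeroth-order term $\|h\|_2^2=\langle h,\sum_j(\pi_j^\e)^2h\rangle=\sum_j\|\pi_j^\e h\|_2^2$ directly. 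For higher derivatives one differentiates $h^{(k)}=\sum_j\p^k\big((\pi_j^\e)^2 h\big)$ and expands by Leibniz; the top-order contribution is $\sum_j(\pi_j^\e)^2 h^{(k)}$, while the remaining terms involve derivatives of $\pi_j^\e$ times lower-order derivatives of $h$, which are absorbed into $\|h\|_{H^{k-1}}$ with constant $C(k,\e)$. So an induction on $k$ works: assuming $\|h\|_{H^{k-1}}\le C\sum_j\|\pi_j^\e h\|_{H^{k-1}}$, write $\|h^{(k)}\|_2\le \big\|\sum_j(\pi_j^\e)^2h^{(k)}\big\|_2+C(k,\e)\|h\|_{H^{k-1}}$ and bound the first term by a duality/pairing argument as in the $k=0$ case (pairing against $\sum_j(\pi_j^\e)^2 h^{(k)}$ and using $\sum_j(\pi_j^\e)^2=1$ gives $\big\|\sum_j(\pi_j^\e)^2 h^{(k)}\big\|_2^2\le \sum_j\|(\pi_j^\e)^2 h^{(k)}\|_2\cdot\|h^{(k)}\|_2$, or more cleanly $\le\sum_j\|\pi_j^\e h^{(k)}\|_2\cdot\|h^{(k)}\|_2$ since $0\le\pi_j^\e\le1$), and then replace $\pi_j^\e h^{(k)}$ by $(\pi_j^\e h)^{(k)}$ up to lower-order Leibniz terms controlled inductively. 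Finally, $\|\pi_j^\e h^{(k)}\|_2\le\|\pi_j^\e h\|_{H^k}+C(k,\e)\|\chi_j^\e h\|_{H^{k-1}}$ using that $\chi_j^\e\equiv1$ on $\supp\pi_j^\e$ (so $\pi_j^\e=\pi_j^\e\chi_j^\e$ and one may insert $\chi_j^\e$ freely); summing and reabsorbing lower-order terms by induction closes the estimate.

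The only genuine subtlety — and the step I would flag as the main obstacle — is bookkeeping the passage between $\pi_j^\e h^{(k)}$ and $(\pi_j^\e h)^{(k)}$ and making sure the lower-order remainders are genuinely controlled by $\sum_j\|\pi_j^\e h\|_{H^{k-1}}$ rather than by $\|h\|_{H^{k-1}}$ in a way that would make the induction circular; this is handled by noting that the same two-sided equivalence at level $k-1$ (the induction hypothesis) converts $\|h\|_{H^{k-1}}$ back into $\sum_j\|\pi_j^\e h\|_{H^{k-1}}\le \sum_j\|\pi_j^\e h\|_{H^k}$, and then absorbing the constant. Everything else is a finite sum of Cauchy–Schwarz and Leibniz estimates with $\e$-dependent constants, which is acceptable since $\e$ is fixed throughout Lemma~\ref{L:EN2}; the family is finite precisely because of the decay encoded in \eqref{ii}, so no uniformity in $\e$ is needed here.
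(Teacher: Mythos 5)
The paper gives no proof of this lemma, declaring it a simple exercise, so there is no argument of the paper to compare against; your proof is correct and is the natural one (partition of unity plus Leibniz commutator plus induction on $k$, with constants allowed to depend on $\e$ and $N$). A few stylistic remarks. After writing $h=\sum_j(\pi_j^\e)^2 h$, the Leibniz remainder in $\partial^k\big(\sum_j(\pi_j^\e)^2 h\big)$ actually vanishes identically (the identity is $h^{(k)}=h^{(k)}$), so one may pass directly to $\|h^{(k)}\|_2^2=\sum_j\|\pi_j^\e h^{(k)}\|_2^2$ from $\sum_j(\pi_j^\e)^2=1$, and hence $\|h^{(k)}\|_2\le\sum_j\|\pi_j^\e h^{(k)}\|_2$. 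The Leibniz commutator $(\pi_j^\e h)^{(k)}-\pi_j^\e h^{(k)}$ involves only $h^{(m)}$ with $m\le k-1$; summing over the finitely many $j$ and bounding crudely in $L_2$ by $C(k,\e,N)\|h\|_{H^{k-1}}$ suffices, so the insertion of $\chi_j^\e$ adds nothing for this particular lemma (and invoking $\chi_j^\e\equiv 1$ on $\supp\pi_j^\e$ would in any case require $\chi_j^\e$ to be $1$ on a \emph{neighborhood} of $\supp\pi_j^\e$ to kill the derivatives there). Finally, the circularity you flag is not a genuine danger: the remainder sits one Sobolev level lower, and the induction hypothesis at level $k-1$ converts $\|h\|_{H^{k-1}}$ into $C\sum_j\|\pi_j^\e h\|_{H^{k-1}}\le C\sum_j\|\pi_j^\e h\|_{H^k}$, so the induction closes cleanly.
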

\begin{proof} The proof is a simple exercise.
\end{proof}\medskip

We now consider the mapping
\[[\tau\mapsto \Phi(\tau f)]:[0,1]\to \kL(H^2(\R), H^1(\R)).\]
As a consequence of Lemma \ref{L31}, this mapping  continuously transforms the operator $\Phi(f)$, for which we want to establish the generator property, into the operator $\Phi(0)= -\pi (-\p_x^2)^{1/2}$.
Indeed, since  the Hilbert transform is a Fourier multiplier with symbol $[\xi\mapsto -i\sign(\xi)],$ we obtain together with  Remark \ref{R1} that
\[
\kF(\Phi(0)[h])(\xi)=-\pi\kF(Hh')(\xi)=i\pi\sign(\xi)\kF( h')(\xi)=-\pi|\xi|(\kF h)(\xi)=-\pi\kF((-\p_x^2)^{1/2}h)(\xi)
\]
for $\xi\in\R$.  
The parameter $\tau$ will allow us to use a continuity argument when showing that the resolvent set of $\Phi(f)$ contains a positive real number, see the proof Theorem \ref{T2}. 
\medskip

Our next goal is to prove that   the operator $\Phi(\tau f)$ can be locally approximated for each $\tau\in[0,1]$ by Fourier multipliers, as stated below.
The estimate \eqref{DE3} with $j=N$ uses to a large extent the fact that $f$ and $f'$ vanish at infinity.

\begin{thm}\label{T1} 
Let  $f\in H^s(\R)$, $s\in(3/2,2),$  and     $\mu>0$ be given.

Then, there exist $\e\in(0,1)$, a finite $\e$-localization family  $\{\pi_j^\e\,:\, -N+1\leq j\leq N\} $ satisfying \eqref{i}-\eqref{v}, a constant $K=K(\e)$, and for each  $ j\in\{-N+1,\ldots,N\}$ and $\tau\in[0,1]$ there 
exist   operators $$\bA_{ j,\tau}\in\kL(H^2(\R), H^{1}(\R))$$
 such that 
 \begin{equation}\label{DE3}
  \|\pi_j^\e\Phi(\tau f)[h]-\bA_{j,\tau}[\pi^\e_j h]\|_{H^1}\leq \mu \|\pi_j^\e h\|_{H^2}+K\|  h\|_{H^{(11-2s)/4}}
 \end{equation}
 for all $ j\in\{-N+1,\ldots, N\}$, $\tau\in[0,1],$ and  $h\in H^2(\R)$. The operators $\bA_{j,\tau}$ are defined  by 
  \begin{align} 
 \bA_{j,\tau }:=&\Big[\PV\int_\R \frac{1}{y}\frac{1}{1+\tau^2\big(\delta_{[x_j^\e,y]}f/y\big)^2}\, dy\Big]\p_x-\frac{\pi}{1+(\tau f'(x_j^\e))^2}(-\p_x^2)^{1/2}, \qquad |j|\leq N-1,\label{FM1}
 \end{align}
 where $x_j^\e$ is a point belonging to  $\supp  \pi_j^\e$, respectively
 \begin{align} 
 \bA_{N,\tau }:=& - \pi (-\p_x^2)^{1/2}.\label{FM2}
 \end{align}
\end{thm}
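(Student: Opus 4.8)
The plan is to prove the estimate \eqref{DE3} for the interior indices $|j|\le N-1$ by a genuine localization argument, and then dispose of the far-field index $j=N$ separately using the decay of $f$ and $f'$. First I would fix $\mu>0$ and choose a candidate $\e\in(0,1)$; the precise smallness of $\e$ will be dictated at the end by the modulus of continuity of $f'$ (and of the ``frozen-coefficient'' factors appearing in $\bA_{j,\tau}$). With $\e$ fixed I select a finite $\e$-localization family $\{\pi_j^\e\}$ satisfying \eqref{i}--\eqref{v} and the associated cutoffs $\{\chi_j^\e\}$ satisfying \eqref{c1}--\eqref{c3}. For $|j|\le N-1$, pick $x_j^\e\in\supp\pi_j^\e$ and define $\bA_{j,\tau}$ by \eqref{FM1}. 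The key point is that on $\supp\pi_j^\e$ the coefficient functions $x\mapsto\bigl(1+\tau^2(\delta_{[x,y]}f/y)^2\bigr)^{-1}$ and $x\mapsto\bigl(1+(\tau f'(x))^2\bigr)^{-1}$ differ from their values ``frozen'' at $x_j^\e$ by an amount controlled by $\osc_{\supp\pi_j^\e} f'$, which is small once $\e$ is small, uniformly in $\tau\in[0,1]$ (this uses $f\in H^s\hookrightarrow \mathrm{BC}^{s-1/2}$ so that $f'$ is uniformly continuous).

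\textbf{Main steps.} The computation I would carry out is the commutator-type expansion
\[
\pi_j^\e\,\Phi(\tau f)[h]-\bA_{j,\tau}[\pi_j^\e h]
= \bigl(\pi_j^\e\,\Phi(\tau f)[h]-\Phi(\tau f)[\pi_j^\e h]\bigr)
+\bigl(\Phi(\tau f)[\pi_j^\e h]-\bA_{j,\tau}[\pi_j^\e h]\bigr),
\]
and estimate the two brackets in $H^1$. For the first bracket (the commutator $[\pi_j^\e,\Phi(\tau f)]$), I would use \eqref{N0} and the identity \eqref{N1} for the derivative, writing everything in terms of the multilinear operators $A_{m,n}$; the commutator replaces a factor $\delta_{[x,y]}h'$ by a difference involving $\delta_{[x,y]}(\pi_j^\e)'$ and lower-order terms, all of which are bounded operators from $h$ into $L_2$ with \emph{no} top-order $H^2$-norm of $\pi_j^\e h$, hence absorbable into the $K\|h\|_{H^{(11-2s)/4}}$ term after interpolating; here I would invoke Lemma \ref{L21} and, crucially, Lemma \ref{L21'} to trade a lost derivative on one slot for extra smoothness of $h$ (the exponent $(11-2s)/4$ is exactly what interpolation between $H^2$ and a lower space produces). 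For the second bracket, insert $\chi_j^\e$ (which is $1$ on $\supp\pi_j^\e$), localize the kernel of $\Phi(\tau f)$ to $x\in\supp\pi_j^\e$, and subtract the frozen operator $\bA_{j,\tau}$; the difference is again of the form $A_{m,n}$-type operators whose coefficient prefactors carry a factor $\osc f'$ on $\supp\chi_j^\e$, yielding the bound $\mu\|\pi_j^\e h\|_{H^2}$ once $\e$ is chosen small, plus remainders of lower order contributing to $K\|h\|_{H^{(11-2s)/4}}$. The far-field piece, for $j=N$ with $\bA_{N,\tau}=-\pi(-\p_x^2)^{1/2}$, is handled identically except that now smallness comes not from $\osc f'$ on a short interval but from $\|f\|_{\infty}$ and $\|f'\|_{\infty}$ being small on $\supp\chi_N^\e\subset[|x|\ge x_N-\e]$ with $x_N\ge\e^{-1}$, again using $f,f'\to0$ at infinity; the term $\Phi(\tau f)[\pi_N^\e h]+\pi f'\dots$ compared with $-\pi(-\p_x^2)^{1/2}[\pi_N^\e h]$ is $A_{m,n}$-type with prefactor small in sup-norm on the relevant region.

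\textbf{The main obstacle.} The delicate point is the bookkeeping in the first bracket, i.e. controlling the commutator $[\pi_j^\e,\Phi(\tau f)]$ together with all the terms generated when one differentiates it once (since the estimate \eqref{DE3} is in $H^1$, not $L_2$): differentiating produces terms like $A_{2,1}(\tau f,\tau f)[\tau f,\tau f',(\pi_j^\e h)']$ and also terms where the derivative lands on $\pi_j^\e$ or on the coefficient, and one must verify that none of these carries the full $\|\pi_j^\e h\|_{H^2}$ with a non-small constant. Lemma \ref{L21'} is the tool that saves the day here, because it allows $b_m=(\pi_j^\e h)'$ (only $H^{r-1}$-regular relative to what a naive application of Lemma \ref{L21} would demand) to be paired with a more regular $c$, at the cost of using $\|h\|_{H^\tau}$ with $\tau\in(5/2-s,1)$ --- which, combined with the $H^2$-norm via interpolation, is precisely the source of the fractional index $(11-2s)/4$. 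I would set up the constant $K=K(\e)$ to absorb all of these lower-order contributions (it is allowed to blow up as $\e\to0$), while keeping the top-order constant genuinely $\le\mu$; the only real care needed is that the smallness of the top-order constant is achieved \emph{before} $K$ is frozen, i.e. the quantifier order ``$\mu$ given $\Rightarrow$ choose $\e$ $\Rightarrow$ then $K$'' must be respected throughout.
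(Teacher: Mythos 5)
Your plan is essentially the paper's strategy — localize with $\{\pi_j^\e\}$, reduce the $H^1$ estimate to the highest-order $L_2$ term via Lemmas~\ref{L21} and~\ref{L21'} (your identification of the exponent $(11-2s)/4$ as $1+\tau$ with $\tau=7/4-s/2$ from Lemma~\ref{L21'} is exactly right), then compare the localized operator with a frozen-coefficient Fourier multiplier and use uniform continuity of $f'$ resp.\ decay of $f,f'$ to make the top-order error small. Your commutator/freezing split of $\pi_j^\e\Phi(\tau f)[h]-\bA_{j,\tau}[\pi_j^\e h]$ is organized slightly differently from the paper's, which first peels off lower-order pieces and then compares $\pi_j^\e A_{0,0}(\tau f)[h'']$ with $\bA_{j,\tau}[(\pi_j^\e h)']$, but that is a cosmetic difference.

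There is, however, a genuine gap in your sketch, at the step where you claim the difference between $\Phi(\tau f)[\pi_j^\e h]$ and the frozen operator is bounded by a factor of $\osc_{\supp\chi_j^\e} f'$ times $\|\pi_j^\e h\|_{H^2}$. The $L_2$-boundedness criterion you must invoke here (Remark~\ref{R2}, built on the estimate \eqref{FFF}) controls $\|B_{m,n}(a_1,\dots)\|_{\kL(L_2)}$ in terms of the \emph{global} sup-norms $\|a_i'\|_\infty$ of the Lipschitz functions in the first $n$ slots. Localization of the argument $\pi_j^\e h''$ and the prefactor $\chi_j^\e$ only tells you that the kernel is evaluated for $x,x-y\in\supp\chi_j^\e$; this does not by itself produce a small $\|a_i'\|_\infty$ for the relevant $a_i$ (e.g. $a_1=f'(x_j^\e)\,{\rm id}_\R-f$), because $f'$ is not globally close to $f'(x_j^\e)$. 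The paper bridges this gap by introducing an auxiliary Lipschitz extension $F_j$ with $F_j=f$ on $\supp\chi_j^\e$ and $F_j'\equiv f'(x_j^\e)$ outside (and, for $j=N$, a linearly interpolated $F_N$ equal to $f$ on $[|x|\ge x_N-\e]$), observes that $B_{m,n}(\dots,f,\dots)[\pi_j^\e h'']$ and $B_{m,n}(\dots,F_j,\dots)[\pi_j^\e h'']$ agree after multiplication by $\chi_j^\e$, and only then applies Remark~\ref{R2} with $\|f'(x_j^\e)-F_j'\|_\infty=\|f'(x_j^\e)-f'\|_{L_\infty(\supp\chi_j^\e)}$ (resp.\ $\|F_N'\|_\infty\to0$ as $\e\to0$). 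Without this device the frozen-coefficient estimate does not close.

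A second, smaller omission is the role of the scalar $\alpha_{j,\tau}=a_\tau(x_j^\e)$ multiplying $\p_x$ in \eqref{FM1}. To compare $\pi_j^\e a_\tau h''$ with $a_\tau(x_j^\e)(\pi_j^\e h)''$ one needs that the principal-value function $a_\tau$ is itself H\"older continuous and vanishing at infinity, uniformly in $\tau$ — this is the content of Lemma~\ref{LA1} and is not reducible to the pointwise oscillation of $(1+\tau^2(\delta_{[x,y]}f/y)^2)^{-1}$ as you phrase it: for large $y$ the latter tends to $1$, not to $(1+(\tau f'(x_j^\e))^2)^{-1}$, so only the $y$-integrated quantity $a_\tau$ behaves well. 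Once you split the top-order term into a multiplication operator (with symbol $a_\tau$) and a singular integral operator $B_{0,1}(\tau f)$, as the paper does, both difficulties become visible and the devices above become natural; your quantifier ordering ($\mu$ given $\Rightarrow$ choose $\e$ $\Rightarrow$ fix $K(\e)$) is correct and is exactly what the proof respects.
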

\begin{proof} 
Let   $\{\pi_j^\e\,:\, -N+1\leq j\leq N\} $ be a $\e$-localization family satisfying  the properties \eqref{i}-\eqref{v} and $\{\chi_j^\e\,:\, -N+1\leq j\leq N\} $ be an associated family satisfying \eqref{c1}-\eqref{c3}, with $\e\in(0,1)$
which will be fixed below.
We first infer from Lemma \ref{LA1}  that for each $ \tau\in[0,1]$
the function 
\[a_{\tau}(x) :=\PV\int_\R \frac{1}{y}\frac{1}{1+\tau^2\big(\delta_{[x,y]}f/y \big)^2}\, dy,\qquad x\in\R,\]
belongs to ${\rm BC}^\alpha(\R)\cap C_0(\R)$, with $\alpha:=s/2-3/4$.
We now  write
\[\bA_{j,\tau}:=\bA^1_{j,\tau}-\bA^2_{j,\tau}\]
where
\[
\bA^1_{j,\tau}:=\alpha_{j,\tau} \p_x,\qquad  \bA^2_{j,\tau}:=\beta_{j,\tau}(-\p_x^2)^{1/2},
\]
and
\begin{align}\label{CO}
\alpha_{j,\tau}
:=\left\{\begin{array}{clll}
  a_\tau(x_j^\e)&,&  |j|\leq N-1,\\[1ex]
  0&,& j=N,
  \end{array}\right.
\qquad    \beta_{j,\tau}:=
\left\{
\begin{array}{clll}
  \cfrac{\pi}{1+(\tau f'(x_j^\e))^2}&,&  |j|\leq N-1,\\[1ex]
  \pi&,& j=N.
  \end{array}
  \right.
\end{align}
Let  now $h\in H^2(\R)$ be arbitrary.
 In the following we shall denote  by $C$ constants which are
independent of $\e$ (and, of course, of $h\in H^2(\R)$, $\tau\in [0,1]$, and $j \in \{-N+1, \ldots, N\}$), while the constants that we denote by $K$ may depend only upon $\e.$ \medskip

\noindent{\em Step 1.\,\,}  We first infer from Lemma \ref{L31}  that  
\begin{align*}
 \|\pi_j^\e\Phi(\tau f)[h]-\bA_{j,\tau}[\pi^\e_j h]\|_{H^1}\leq& \|\pi_j^\e\Phi(\tau f)[h]-\bA_{j,\tau}[\pi^\e_j h]\|_{2}+\|(\pi_j^\e\Phi(\tau f)[h]-\bA_{j,\tau}[\pi^\e_j h])'\|_{2}\\[1ex]
 \leq&(1+\|(\pi_j^\e)'\|_\infty)\|A_{0,0} (\tau f)[h']\|_2+\|\bA_{j,\tau}[\pi^\e_j h]\|_{2}\\[1ex]
 &+2\|A_{2,1}(\tau f,\tau f)[f,f',h']\|_2+\|\pi_j^\e A_{0,0}(\tau f)[h'']-\bA_{j,\tau}[(\pi^\e_j h)']\|_{2}.
\end{align*}
Using Lemma \ref{L21'} (with $r=s$ and $\tau=7/4-s/2$) and Lemma \ref{L21} (with $r=s$), it follows   
\begin{align} \label{m4}
 \|\pi_j^\e\Phi(\tau f)[h]-\bA_{j,\tau}[\pi^\e_j h]\|_{H^1}\leq& K\|h\|_{H^{(11-2s)/4}} +\|\pi_j^\e A_{0,0}(\tau f)[h'']-\bA_{j,\tau}[(\pi^\e_j h)']\|_{2}.
\end{align}
We are left to estimate the $L_2$-norm of the highest order term $\pi_j^\e A_{0,0}(\tau f)[h'']-\bA_{j,\tau}[(\pi^\e_j h)']$, and for this we need several steps.
 \medskip

\noindent{\em Step 2.\,\,} With  the notation introduced in Remark \ref{R2} we have
\begin{align*}
  A_{0,0}(\tau f)[h'']=&a_\tau h''-B_{0,1}(\tau f)[h''],
\end{align*}
and therewith
 \begin{align} 
 \|\pi_j^\e A_{0,0}(\tau f)[h'']-\bA_{j,\tau}[(\pi^\e_j h)'] \|_{2}\leq \|\pi_j^\e a_\tau   h''-\bA^1_{j,\tau}[(\pi^\e_j h)']\|_2+\|\pi_j^\e B_{0,1}(\tau f)[h'']-\bA^2_{j,\tau}[(\pi^\e_j h)']\|_2.\label{m5}
\end{align}
In virtue of Lemma \ref{LA1} (in particular of the estimate \eqref{UET1}) and of $\chi_{j}^\e=1$ on $\supp\pi_j^\e$, we get for $|j|\leq N-1$ 
\begin{align}
 \| \pi_j^\e a_\tau   h''-\bA^1_{j,\tau}[(\pi^\e_j h)'\|_{2}=&\|a_\tau\pi_j^\e  h''-a_\tau(x_j^\e)(\pi_j^\e h)''\|_2\leq \|(a_\tau -a_\tau(x_j^\e))(\pi_j^\e h)''\|_2+K\|h\|_{H^1}\nonumber\\[1ex]
  =&\|(a_\tau  -a_\tau(x_j^\e))\chi_j^\e(\pi_j^\e h)''\|_2+K\|h\|_{H^1}\nonumber\\[1ex]
   =&\|(a_\tau -a_\tau (x_j^\e))\chi_j^\e\|_{\infty}\|(\pi_j^\e h)''\|_2+K\|h\|_{H^1}\nonumber\\[1ex]
   \leq &\frac{\mu}{2}\|\pi_j^\e h\|_{H^2}+K\|h\|_{H^1} , \label{m6}
\end{align}
provided that  $\e$   is sufficiently small.
We have  used here (and also later on without explicit mentioning) the  fact that $|\supp \chi_j^\e|\leq 3\e.$ 
Since $\bA^1_{N,\tau}=0$, we obtain  from \eqref{UET2} for $\e$ sufficiently small   that 
\begin{align}
 \| \pi_N^\e a_\tau   h''-\bA^1_{N,\tau}[(\pi^\e_N h)'\|_{2}=&\| \pi_N^\e a_\tau   h''\|_{2}\leq\| a_\tau \chi_N^\e\|_{\infty}\|(\pi_N^\e h)''\|_2+K\|h\|_{H^1}\nonumber\\[1ex]
   \leq &\frac{\mu}{2}\|\pi_N^\e h\|_{H^2}+K\|h\|_{H^1}.\label{m6'}
\end{align}

\noindent{\em Step 3.\,\,}
 We are left with the term  $\|\pi_j^\e B_{0,1}(\tau f)[h'']-\bA^2_{j,\tau}[(\pi^\e_j h)']\|_2$, and we consider first the case $|j|\leq N-1 $ (see   {\em Step 4} for $j=N$).
Observing  that $\pi(-\p_x^2)^{1/2}=B_{0,1}(0)\circ\p_x,$ it follows that 
 \begin{align*}
  \pi_j^\e B_{0,1}(\tau f)[h'']-\bA^2_{j,\tau}[(\pi^\e_j h)']=T_1[h]-T_2[h],
 \end{align*}
where
\begin{align*}
T_1[h]&:=\pi_j^\e B_{0,1}(\tau f)[h'']-\frac{1}{1+(\tau f'(x_j^\e))^2}B_{0,1}(0)[\pi_j^\e h''],\\[1ex]
 T_2[h]&:=\frac{1}{1+(\tau f'(x_j^\e))^2} B_{0,1}(0)[(\pi_j^\e)''h+2(\pi_j^\e)'h'].
\end{align*}
Since by Remark \ref{R2}
\begin{align}
 \| T_2[h]\|_{2} \leq K\|h\|_{H^1}, \label{m6''}
\end{align}
we are left to estimate $T_1[h],$ which is further decomposed 
\[
T_1[h]=T_{11}[h]-T_{12}[h],
\]
with 
\begin{align*}
T_{11}[h](x):=&\PV\int_\R\Big[\frac{1}{1+\tau^2\big( \delta_{[x,y]} f/y\big)^2}-\frac{1}{1+ \big( \tau f'(x_j^\e)\big)^2}\Big]\frac{(\chi_j^\e\pi_j^\e h'')(x-y)}{y}\,dy,\\[1ex]
T_{12}[h](x):=& \PV\int_\R\frac{ \delta_{[x,y]} \pi_j^\e/y}{1+\tau^2\big( \delta_{[x,y]} f/y\big)^2}   h'' (x-y) \,dy.
\end{align*}
Integrating by parts, we  obtain the following relation
 \begin{align*}
T_{12}[h]=&B_{0,1}(\tau f)[(\pi_j^\e)'h']-B_{1,1}(\pi_j^\e,\tau f)[h']-2\tau^2B_{2,2}(\pi_j^\e,f,\tau f,\tau f)[f'h']\\[1ex]
&+2\tau^2B_{3,2}(\pi_j^\e,f,f,\tau f,\tau f)[h'],
\end{align*}
and Remark \ref{R2} leads us to
\begin{align}
 \| T_{12}[h]\|_{2} \leq K\|h\|_{H^1}. \label{m6'''}
\end{align}
In order to deal with the term $T_{11}[h]$  we let $F_{j}\in C(\R)$ denote the Lipschitz function that satisfies 
\begin{equation}\label{Func}
 F_{j}=f \quad   \text{on $\supp\chi_j^\e$}, \qquad F_{j}'=f'(x_j^\e) \quad   \text{on $\R\setminus\supp\chi_j^\e$,} 
\end{equation}
and we observe that
\begin{align*}
 T_{11}[h](x):=&\tau^2
 \PV\int_\R\frac{\big[\delta_{[x,y]}(f'(x_j^\e){\rm id}_{\R}-f)/y\big]\big[\delta_{[x,y]}(f'(x_j^\e){\rm id}_{\R}+f)/y\big]}{\big[1+\tau^2\big( \delta_{[x,y]} f/y\big)^2\big]\big[1+ \big( \tau f'(x_j^\e)\big)^2\big]}\frac{(\chi_j^\e\pi_j^\e h'')(x-y)}{y}\,dy\\[1ex]
=& \frac{\tau^2}{1+ \big( \tau f'(x_j^\e)\big)^2}\big(T_{111}[h]-T_{112}[h]\big)(x),
\end{align*}
where 
\begin{align*}
 &T_{111}[h]:=\chi_j^\e B_{2,1}(f'(x_j^\e){\rm id}_{\R}-f,f'(x_j^\e){\rm id}_{\R}+f ,\tau f)[\pi_j^\e h''],\\[1ex]
 &T_{112}[h](x):= \PV\int_\R\frac{\big[\delta_{[x,y]}(f'(x_j^\e){\rm id}_{\R}-f)/y\big]\big[\delta_{[x,y]}(f'(x_j^\e){\rm id}_{\R}+f)/y\big]\big(\delta_{[x,y]}\chi_j^\e/y\big)}{ 1+\tau^2\big( \delta_{[x,y]} f/y\big)^2 } (\pi_j^\e h'')(x-y) \,dy. 
\end{align*}
 Integrating by parts as in the case of $T_{12}[h]$, it follows from Remark \ref{R2} that
\begin{align}
 \| T_{112}[h]\|_{2} \leq K\|h\|_{H^1}. \label{m6''''}
\end{align}
On the other hand, \eqref{Func}, Remark \ref{R2} and the H\"older continuity of $f'$ yield
\begin{align}
 \|T_{111}[h]\|_2=&\|\chi_j^\e B_{2,1}(f'(x_j^\e){\rm id}_{\R}-f,f'(x_j^\e){\rm id}_{\R}+f ,\tau f)[\pi_j^\e h'']\|_2\nonumber\\[1ex]
 =&\|\chi_j^\e B_{2,1}(f'(x_j^\e){\rm id}_{\R}-F_j,f'(x_j^\e){\rm id}_{\R}+F_j ,\tau f)[\pi_j^\e h'']\|_2\nonumber\\[1ex]
 \leq & C\|f'(x_j^\e)-F_j'\|_\infty\|\pi_j^\e h''\|_2\nonumber\\[1ex]
 =&  C\|f'(x_j^\e)-f'\|_{L_\infty(\supp \chi_j^\e)}\|\pi_j^\e h''\|_2\nonumber\\[1ex]
 \leq &\frac{\mu}{2}\|\pi_j^\e h\|_{H^2}+K\|h\|_{H^1}.\label{m67}
\end{align}
The desired estimate \eqref{DE3} follows for $|j|\leq N-1$ from \eqref{m4}-\eqref{m6} and \eqref{m6''}, \eqref{m6'''}, \eqref{m6''''}, and \eqref{m67}.\medskip

\noindent{\em Step 4.\,\,}
 We are left  with the term $\|\pi_N^\e B_{0,1}(\tau f)[h'']-\bA^2_{N,\tau}[(\pi^\e_N h)']\|_2$, which we decompose as follows
\begin{align*}
\big(\pi_N^\e B_{0,1}(\tau f)[h'']-\bA^2_{N,\tau}[(\pi^\e_N h)']\big)(x) =&\pi_N^\e(x)\PV\int_\R\frac{ h''(x-y)}{y}\frac{1}{1+\tau^2 \big(\delta_{[x,y]}f/y\big)^2}\, dy\\[1ex]
&- \PV\int_\R\frac{(\pi_N^\e h)''(x-y)}{y}\, dy \\[1ex]
=:&  T_1[h](x)+ T_2[h](x)- T_3[h](x), 
\end{align*}
where
\begin{align*}
&T_1[h] :=-\tau^2B_{2,1}(f,f,\tau f)[\pi_N^\e h''],\qquad  T_3[h]:= B_{0,1}(0)[(\pi_N^\e)'' h+2(\pi_N^\e)' h'],\\[1ex]
 &T_2[h](x):=\PV\int_\R h''(x-y)\frac{ \delta_{[x,y]}\pi_N^\e}{y}\frac{1}{1+\tau^2 \big(\delta_{[x,y]}f/y\big)^2}\, dy .\\[1ex]
\end{align*}
For the difference $ T_2[h]-T_3[h]$ we find, as in the previous step (see \eqref{m6''} and \eqref{m6'''})
that 
\begin{align}
 \| T_2[h]-T_3[h]\|_{2} \leq K\|h\|_{H^1}. \label{m69a}
\end{align}
When dealing with $  T_1[h],$ we  introduce the function $F_N\in W^1_\infty(\R)$ by the formula
\[
F_N(x):=\left\{
\begin{array}{lll}
f(x)&,& |x|\geq x_N-\e, \\[1ex]
\cfrac{x+x_N-\e}{2(x_N-\e)}f(x_N-\e)+\cfrac{x_N-\e-x}{2(x_N-\e)}f(-x_N+\e)&,& |x|\leq x_N-\e.
\end{array}
\right.
\]
The relation  \eqref{ii} implies $\|F_N\|_\infty+\|F_N'\|_\infty\to0 $ for $\e\to0$.
Moreover,  it holds that 
 \begin{align*}
  T_1[h](x)=&-\tau^2\PV\int_\R\frac{(\chi_N^\e\pi_N^\e h'')(x-y)}{y}  \frac{ \big(\delta_{[x,y]}f/y\big)^2}{1+\tau^2 \big(\delta_{[x,y]}f/y\big)^2}=:T_{11}[h](x)-T_{12}[h](x), 
 \end{align*} 
 where
 \begin{align*}
&T_{11}[h](x):=\tau^2\PV\int_\R( \pi_N^\e h'')(x-y)  \frac{ \big(\delta_{[x,y]}f/y\big)^2\big(\delta_{[x,y]}\chi_N^\e/y\big)}{1+\tau^2 \big(\delta_{[x,y]}f/y\big)^2}\, dy,  \\[1ex]
 &T_{12}[h] :=\tau^2\chi_N^\e B_{2,1}(f,f,\tau f)[\pi_N^\e h'']. 
 \end{align*}
Recalling  that $\supp \pi_N^\e\subset \supp \chi_N^\e \subset[|x|\geq x_N-\e]$ and that $f=F_N$ on $\supp \chi_N^\e ,$ it follows with
Remark \ref{R2}  that 
\begin{align}\label{m69b}
\|T_{12}[h]\|_2=&\|\tau^2\chi_N^\e B_{2,1}(F_N,F_N,\tau f)[\pi_N^\e h'']\|_2 \leq\| B_{2,1}(F_N,F_N,\tau f)[\pi_N^\e h'']\|_2\leq C\|F_N'\|_\infty^2\|\pi_N^\e h''\|_2\nonumber\\[1ex]
\leq& \frac{\mu}{2}\|\pi_j^\e h\|_{H^2}+K\| h\|_{H^1} 
\end{align}
for small $\e$.
As $T_{11}[h]$ can be estimated in the same manner as the term $T_{112}[h]$ in the previous step, we obtain together with  \eqref{m69a} and \eqref{m69b} that
\begin{align}
\|\pi_N^\e B_{0,1}(\tau f)[h'']-\bA^2_{N,\tau}[(\pi^\e_N h)']\|_2\leq \frac{\mu}{2}\|\pi_j^\e h\|_{H^2}+K\|h\|_{H^1}\label{m11}
\end{align}
if $\e$ is sufficiently small. 
 The   claim \eqref{DE3} follows for $j=N$ from \eqref{m4}-\eqref{m5}, \eqref{m6'},  and  \eqref{m11}.
\end{proof}\medskip

The operators $\bA_{\tau,j}$  found in Theorem \ref{T1} are generators of strongly continuous analytic semigroups in $\kL(H^1(\R))$ and they satisfy resolvent estimates which are uniform with respect to $x_j^\e\in\R $ and  $\tau\in [0, 1] $,
see Proposition \ref{PP1} below.
To be more precise, in Proposition \ref{PP1} and in the proof of  Theorem \ref{T2}, the Sobolev spaces $H^k(\R), k\in\{1,2\},$ consist  of complex valued functions
and $\bA_{j,\tau}$ are the natural  extensions (complexifications) of the  operators
introduced in Theorem \ref{T1}.

\begin{prop}\label{PP1} 
Let  $f\in H^s(\R)$, $s\in(3/2,2),$ be fixed. Given $x_0\in \R$ and $\tau\in[0,1]$, let
\[
\bA_{x_0,\tau}:=\alpha_{\tau} \p_x-\beta_{\tau}(-\p_x^2)^{1/2},
\]
where
\begin{align*} 
\alpha_{\tau}\in\{0,a_\tau(x_0)\}
\qquad\text{and}\qquad  \beta_{\tau}\in\Big\{\pi,\cfrac{\pi}{1+(\tau f'(x_0))^2}\Big\},
\end{align*}
with $a_\tau$ denoting the function defined in Lemma \ref{LA1}. 
  Then, there exists a constant  $\kappa_0\geq1$  such that  
  \begin{align}\label{13}
&\lambda-\bA_{x_0,\tau}\in{\rm Isom}(H^2(\R), H^1(\R)),\\[1ex]
\label{14}
& \kappa_0\|(\lambda-\bA_{x_0,\tau})[h]\|_{H^1}\geq  |\lambda|\cdot\|h\|_{H^1}+\|h\|_{H^2}
\end{align}
for all $x_0\in\R$, $\tau\in[0,1],$    $\lambda\in\C$ with $\re \lambda\geq 1$, and $h\in H^2(\R)$.
\end{prop}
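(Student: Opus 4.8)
The plan is to pass to the Fourier side and treat $\bA_{x_0,\tau}$ as a Fourier multiplier with symbol
\[
m(\xi):=i\alpha_\tau \xi-\beta_\tau|\xi|,\qquad \xi\in\R,
\]
so that $\lambda-\bA_{x_0,\tau}$ corresponds to multiplication by $\lambda-m(\xi)$. Since $\alpha_\tau$ is real and $\beta_\tau>0$ is bounded away from $0$ and from $\infty$ uniformly in $x_0$ and $\tau$ (because $0<\pi/(1+(\tau f'(x_0))^2)\le\pi$ and because $|a_\tau(x_0)|\le\|a_\tau\|_\infty\le C$ by Lemma \ref{LA1}), one has a uniform bound
\[
\re\big(\lambda-m(\xi)\big)=\re\lambda+\beta_\tau|\xi|\ge \re\lambda+\underline\beta|\xi|,\qquad \underline\beta:=\inf_{x_0,\tau}\beta_\tau>0,
\]
for all $\xi\in\R$ whenever $\re\lambda\ge 1$. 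In particular $\lambda-m(\xi)$ never vanishes, and on the multiplier side the operator $(\lambda-\bA_{x_0,\tau})^{-1}$ is multiplication by $(\lambda-m(\xi))^{-1}$; since the Sobolev norms are $\|h\|_{H^k}^2=\int_\R(1+|\xi|^2)^k|\widehat h(\xi)|^2\,d\xi$, the bijectivity claim \eqref{13} reduces to checking that this multiplier maps $H^1(\R)$ boundedly onto $H^2(\R)$, which is immediate once \eqref{14} is established.

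The heart of the matter is therefore the pointwise symbol estimate
\[
|\lambda-m(\xi)|\ \ge\ \frac{1}{\kappa_0}\,\big(|\lambda|+(1+|\xi|^2)^{1/2}\big),\qquad \re\lambda\ge 1,\ \xi\in\R,
\]
with $\kappa_0$ independent of $x_0$, $\tau$, $\lambda$, $\xi$; given this, squaring, multiplying by $(1+|\xi|^2)\,|\widehat h(\xi)|^2$, and integrating yields \eqref{14} at once. To prove the pointwise bound I would split according to the size of $|\xi|$ relative to $|\lambda|$. When $\underline\beta|\xi|\ge |\lambda|$ one uses $|\lambda-m(\xi)|\ge \re(\lambda-m(\xi))\ge \beta_\tau|\xi|\ge\underline\beta|\xi|$ together with $\beta_\tau|\xi|\ge\frac12(\re\lambda+\beta_\tau|\xi|)\ge\frac12|\lambda|\cos(\arg\lambda)$... more simply, in this regime both $|\xi|$ and $|\lambda|$ are controlled by $\re(\lambda-m(\xi))$ up to the constant $\underline\beta$, because $\re(\lambda-m(\xi))=\re\lambda+\beta_\tau|\xi|\ge\max\{\re\lambda,\underline\beta|\xi|\}$ and, crucially, $\re\lambda\ge\frac{1}{\sqrt2}|\lambda|$ is \emph{false} in general — so instead I handle the opposite regime first: when $\underline\beta|\xi|\le |\lambda|$ I bound $|\lambda-m(\xi)|\ge|\lambda|-|m(\xi)|$ only if $|m(\xi)|$ is a fixed fraction of $|\lambda|$, which need not hold; the clean argument is to use $|\lambda-m(\xi)|\ge\re(\lambda-m(\xi))=\re\lambda+\beta_\tau|\xi|\ge 1+\underline\beta|\xi|\ge c(1+|\xi|^2)^{1/2}$ to get the $H^2$ part, and separately $|\lambda-m(\xi)|^2=(\re\lambda+\beta_\tau|\xi|)^2+(\im\lambda+\alpha_\tau\xi)^2$; writing $\im\lambda+\alpha_\tau\xi=\im\lambda\pm\alpha_\tau\xi$ and using $(\im\lambda+\alpha_\tau\xi)^2\ge\frac12(\im\lambda)^2-\alpha_\tau^2\xi^2$ together with the already-proven control $\beta_\tau^2|\xi|^2\ge\underline\beta^2|\xi|^2$ — and the fact that $\alpha_\tau^2/\underline\beta^2$ is a fixed constant $C_1$ — one obtains
\[
|\lambda-m(\xi)|^2\ \ge\ (\re\lambda)^2+\beta_\tau^2|\xi|^2+\tfrac12(\im\lambda)^2-\alpha_\tau^2|\xi|^2\ \ge\ \tfrac12(\re\lambda)^2+\tfrac12(\im\lambda)^2+(\underline\beta^2-\alpha_\tau^2)|\xi|^2,
\]
which is good provided $\underline\beta^2\ge 2\alpha_\tau^2$; if not, one instead absorbs the offending $\alpha_\tau^2|\xi|^2$ into a large multiple of $(\re\lambda+\beta_\tau|\xi|)^2\ge\beta_\tau^2|\xi|^2$ at the cost of enlarging $\kappa_0$, using that $\alpha_\tau^2/\beta_\tau^2$ is bounded. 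In all cases one arrives at $|\lambda-m(\xi)|^2\ge c(|\lambda|^2+|\xi|^2)\ge c'(|\lambda|+(1+|\xi|^2)^{1/2})^2$ with $c,c'$ uniform.

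The one genuine input from outside is that $\alpha_\tau=a_\tau(x_0)$ is bounded uniformly in $x_0$ and $\tau$: this is exactly the content of Lemma \ref{LA1}, which guarantees $a_\tau\in{\rm BC}^\alpha(\R)\cap C_0(\R)$ with a norm depending only on $\|f\|_{H^s}$, hence $\sup_{x_0,\tau}|a_\tau(x_0)|\le C(\|f\|_{H^s})$. The boundedness of $\beta_\tau$ away from zero and infinity is elementary. I expect the main obstacle to be purely organizational: keeping the constant $\kappa_0$ manifestly independent of $x_0$, $\tau$, $\lambda$, $\xi$ while carrying out the case split on $|\xi|$ versus $|\lambda|$, and in particular correctly handling the cross term $(\im\lambda+\alpha_\tau\xi)^2$, whose sign one cannot control — so one must always estimate it from below by $\frac12(\im\lambda)^2-\alpha_\tau^2\xi^2$ and then reabsorb the negative piece. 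Once \eqref{14} holds, \eqref{13} is a formal consequence (Fourier-multiplier inversion), completing the proof.
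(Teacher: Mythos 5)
Your proposal is correct and follows essentially the same Fourier-multiplier route as the paper: pass to the symbol $m_\tau(\xi)=-\beta_\tau|\xi|+i\alpha_\tau\xi$, get $\|h\|_{H^2}$ from $(\re\lambda+\beta_\tau|\xi|)^2\ge\min\{1,\beta_\tau^2\}(1+|\xi|^2)$, and get $|\lambda|\|h\|_{H^1}$ by controlling $(\im\lambda)^2$ against $(\im\lambda-\alpha_\tau\xi)^2+\alpha_\tau^2\xi^2$ with $\alpha_\tau^2\xi^2$ reabsorbed into $(\re\lambda+\beta_\tau|\xi|)^2$, uniformly thanks to the $BC^\alpha$-bound on $a_\tau$ from Lemma~\ref{LA1}. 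The paper organizes the second step a little more cleanly by bounding the ratio $|\lambda|^2/|\lambda-m_\tau(\xi)|^2$ directly (giving $\kappa_0$ at once, with no branching on whether $\underline\beta^2\ge 2\alpha_\tau^2$), but the underlying estimates are the same.
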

\begin{proof}  The  constants $\alpha_{\tau},\beta_{\tau} $ defined above satisfy, in view of \eqref{SE1},
\begin{align}\label{Bounds}
 |\alpha_{\tau}|\leq 4\Big(\|f\|_\infty^2+\frac{2\|f'\|_\infty[f']_{s-3/2}}{s-3/2}\Big)\qquad\text{and}\qquad \beta_{\tau}\in\Big[\frac{\pi}{1+\max |f'|^2},\pi\Big].
\end{align}
Furthermore, the operator  $\bA_{x_0,\tau}$ is a Fourier multiplier with symbol 
\[m_\tau(\xi):=-\beta_{\tau}|\xi|+i\alpha_{\tau} \xi, \qquad \xi\in\R.\]
 Given $\re\lambda\geq1,$ it is easy to see that the operator $R(\lambda,\bA_{x_0,\tau})$  defined by
 \[
\kF (R(\lambda,\bA_{x_0,\tau})[h])=\frac{1}{\lambda-m_\tau}\kF h,\qquad h\in H^1(\R),
 \]
 belongs to $\kL(H^1(\R), H^2(\R))$  and that it is the inverse to $\lambda-\bA_{x_0,\tau}$.
 Moreover,  for each $\re\lambda\geq1$ and $h\in H^2(\R)$, we have 
 \begin{align}
  \|(\lambda-\bA_{x_0,\tau})[h]\|_{H^1}^2=&\int_\R(1+|\xi|^2) |\kF((\lambda-\bA_{x_0,\tau})[h])|^2(\xi)\, d \xi=\int_\R(1+|\xi|^2) |\lambda-m_\tau(\xi)|^2|\kF h|^2(\xi)\, d \xi\nonumber\\[1ex]
  \geq&\min\{1,\beta_{\tau}^2\}\int_\R(1+|\xi|^2)^2|\kF h|^2(\xi)\, d \xi=\min\{1,\beta_{\tau}^2\}\|h\|_{H^2}^2.\label{*}
 \end{align}
Appealing to the inequality 
 \begin{align*}
\frac{|\lambda|^2}{|\lambda-m_\tau(\xi)|^2}=&  \frac{(\re\lambda)^2 }{(\re\lambda+\beta_{\tau}|\xi|)^2+(\im\lambda-\alpha_{\tau}\xi)^2} +\frac{(\im\lambda)^2 }{(\re\lambda+\beta_{\tau}|\xi|)^2+(\im\lambda-\alpha_{\tau}\xi)^2}\nonumber\\[1ex]
\leq & 1+\frac{2(\im\lambda-\alpha_{\tau}\xi)^2+2\alpha_{\tau}^2\xi^2}{(\re\lambda+\beta_{\tau}|\xi|)^2+(\im\lambda-\alpha_{\tau}\xi)^2}
\leq 1+ 2\Big[1+\Big(\frac{\alpha_{\tau}}{\beta_{\tau}}\Big)^2\Big]\leq 3\Big[1+\Big(\frac{\alpha_{\tau}}{\beta_{\tau}}\Big)^2\Big] 
 \end{align*}
for $\lambda\in\C$ with $\re\lambda\geq1,$ the   estimate \eqref{14} follows from the relations \eqref{Bounds} and \eqref{*}.
\end{proof}\medskip

We now establish the desired generation result.

\begin{thm}\label{T2}
 Let $f\in H^s(\R)$, $s\in(3/2,2),$ be given. Then 
 \begin{align}\label{AG}
  -\Phi(f)\in \kH(H^2(\R), H^1(\R)).
 \end{align}
\end{thm}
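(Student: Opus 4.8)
The plan is to combine the local approximation result of Theorem~\ref{T1} with the uniform resolvent estimates of Proposition~\ref{PP1}, using a standard localization-and-perturbation argument to transfer the generator property from the Fourier multipliers $\bA_{j,\tau}$ back to $\Phi(f)$. Since $-\Phi(f)\in\kH(H^2(\R),H^1(\R))$ is equivalent to the existence of constants $\omega\in\R$ and $\kappa\geq1$ such that $\lambda-\Phi(f)\in\Isom(H^2(\R),H^1(\R))$ and $\kappa\|(\lambda-\Phi(f))[h]\|_{H^1}\geq|\lambda|\,\|h\|_{H^1}+\|h\|_{H^2}$ for all $h\in H^2(\R)$ and all $\lambda\in\C$ with $\re\lambda\geq\omega$, it suffices to establish an a priori estimate of this form together with surjectivity (injectivity being immediate from the estimate).

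First I would fix $\mu>0$ small (to be chosen depending only on the constant $\kappa_0$ from Proposition~\ref{PP1}), apply Theorem~\ref{T1} with $\tau=1$ to obtain $\e\in(0,1)$, a finite $\e$-localization family $\{\pi_j^\e\}$, the constant $K=K(\e)$, and the operators $\bA_{j}:=\bA_{j,1}$ satisfying
\[
\|\pi_j^\e\Phi(f)[h]-\bA_{j}[\pi_j^\e h]\|_{H^1}\leq\mu\|\pi_j^\e h\|_{H^2}+K\|h\|_{H^{(11-2s)/4}}.
\]
For $\lambda\in\C$ with $\re\lambda\geq1$ and $h\in H^2(\R)$, write $\pi_j^\e(\lambda-\Phi(f))[h]=(\lambda-\bA_{j})[\pi_j^\e h]-\big(\pi_j^\e\Phi(f)[h]-\bA_j[\pi_j^\e h]\big)$, apply the estimate \eqref{14} of Proposition~\ref{PP1} to the first term and the Theorem~\ref{T1} bound to the second, and sum over $j$. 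Using the equivalence of norms from Lemma~\ref{L:EN2} one obtains, after absorbing the $\mu\|\pi_j^\e h\|_{H^2}$ terms into the left-hand side for $\mu$ small enough,
\[
C\|(\lambda-\Phi(f))[h]\|_{H^1}\geq|\lambda|\,\|h\|_{H^1}+\|h\|_{H^2}-K'\|h\|_{H^{(11-2s)/4}},
\]
with $C,K'$ independent of $\lambda$ and $h$. Since $(11-2s)/4<2$ (as $s>3/2$), interpolation together with Young's inequality gives $K'\|h\|_{H^{(11-2s)/4}}\leq\tfrac12\|h\|_{H^2}+K''\|h\|_{H^1}$, and then the $H^2$ part is again absorbed while the $K''\|h\|_{H^1}$ term is dominated by $\tfrac12|\lambda|\,\|h\|_{H^1}$ provided $\re\lambda\geq\omega$ for $\omega$ large enough. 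This yields the desired a priori estimate
\[
\kappa\|(\lambda-\Phi(f))[h]\|_{H^1}\geq|\lambda|\,\|h\|_{H^1}+\|h\|_{H^2},\qquad\re\lambda\geq\omega.
\]

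For surjectivity I would use the continuity (homotopy) argument for which the parameter $\tau\in[0,1]$ in Theorem~\ref{T1} was introduced. The point is that the a priori estimate above holds, \emph{with the same constants} $\kappa$ and $\omega$, for every operator $\Phi(\tau f)$, $\tau\in[0,1]$, because Theorem~\ref{T1} and Proposition~\ref{PP1} are uniform in $\tau$ (and in $x_j^\e$). At $\tau=0$ the operator $\Phi(0)=-\pi(-\p_x^2)^{1/2}$ is a Fourier multiplier, so $\lambda-\Phi(0)\in\Isom(H^2(\R),H^1(\R))$ for $\re\lambda\geq\omega$; in particular the set $\{\tau\in[0,1]: \lambda-\Phi(\tau f)\in\Isom(H^2(\R),H^1(\R))\}$ is nonempty. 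This set is open by the Neumann series / openness of the isomorphism set, and it is closed: if $\lambda-\Phi(\tau_n f)$ are isomorphisms with $\tau_n\to\tau$, then by Lemma~\ref{L31} $\Phi(\tau_n f)\to\Phi(\tau f)$ in $\kL(H^2,H^1)$ and the uniform lower bound prevents the inverses from blowing up, so the limit is again an isomorphism. By connectedness of $[0,1]$, $\lambda-\Phi(f)$ is an isomorphism for all $\re\lambda\geq\omega$. Combined with the a priori estimate this gives \eqref{AG}.

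The main obstacle is the bookkeeping in the localization estimate: one must verify that summing the local estimates over the finitely many $j$ (using \eqref{iv} and Lemma~\ref{L:EN2}) genuinely produces the global estimate with the correct powers of $|\lambda|$, and in particular that the commutator-type lower-order terms generated when passing $\pi_j^\e$ through $\Phi(f)$ — already accounted for on the right-hand side of \eqref{DE3} as $K\|h\|_{H^{(11-2s)/4}}$ — can be absorbed by interpolation. Everything else (the homotopy in $\tau$, the openness/closedness of the isomorphism set, the passage from the a priori estimate plus surjectivity to membership in $\kH$) is routine functional analysis once the uniform estimate is in hand.
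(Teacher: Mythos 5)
Your proposal is correct and follows essentially the same route as the paper: combine the local approximation from Theorem~\ref{T1} with the uniform resolvent estimates of Proposition~\ref{PP1}, sum via Lemma~\ref{L:EN2}, absorb the lower-order term by interpolation and Young's inequality to obtain the a priori estimate uniformly in $\tau\in[0,1]$, and then transfer the isomorphism property from $\Phi(0)=-\pi(-\p_x^2)^{1/2}$ to $\Phi(f)$ by the method of continuity. The only cosmetic difference is that you spell out the nonempty/open/closed connectedness argument inline, whereas the paper cites \cite[Theorem 5.2]{GT01} for the method of continuity and \cite[Corollary 2.1.3]{L95} for passing from the resolvent estimate plus one isomorphism to the generation of an analytic semigroup.
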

\begin{proof}
 Let $\kappa_0\geq1$ be the constant determined in Proposition \ref{PP1}.
 Setting $\mu:=1/2\kappa_0$, we deduce  from  Theorem \ref{T1} that there exists a constant $\e\in(0,1),$  a $\e$-localization family $\{\pi_j^\e\,:\, -N+1\leq j\leq N\}$ that satisfies \eqref{i}-\eqref{v}, a constant $K=K(\e)$, 
 and for each  $ -N+1\leq j\leq N$ and $\tau\in[0,1]$
  operators $\bA_{j,\tau}\in\kL(H^2(\R), H^{1}(\R))$
 such that 
 \begin{equation}\label{DE4}
  \|\pi_j^\e\Phi(\tau f)[h]-\bA_{j,\tau}[\pi^\e_j h]\|_{H^1}\leq \frac{1}{2\kappa_0}\|\pi_j^\e h\|_{H^2}+K\|  h\|_{H^{(11-2s)/4}}
 \end{equation}
 for all $-N+1\leq j\leq N$, $\tau\in[0,1],$ and  $h\in H^2(\R)$.
In view of Proposition \ref{PP1}, it holds that
  \begin{equation}\label{DE5}
    \kappa_0\|(\lambda-\bA_{j,\tau})[\pi^\e_jh]\|_{H^1}\geq |\lambda|\cdot\|\pi^\e_jh\|_{H^1}+ \|\pi^\e_j h\|_{H^2}
 \end{equation}
 for all $-N+1\leq j\leq N$, $\tau\in[0,1],$ $\lambda\in\C$ with $\re \lambda\geq 1$, and  $h\in H^2(\R)$.
 The relations \eqref{DE4}-\eqref{DE5} lead us to
 \begin{align*}
   \kappa_0\|\pi_j^\e(\lambda-\Phi(\tau f))[h]\|_{H^1}\geq& \kappa_0\|(\lambda-\bA_{j,\tau})[\pi^\e_jh]\|_{H^1}-\kappa_0\|\pi_j^\e\Phi(\tau f)[h]-\bA_{j,\tau}[\pi^\e_j h]\|_{H^1}\\[1ex]
   \geq& |\lambda|\cdot\|\pi^\e_jh\|_{H^1}+ \frac{1}{2}\|\pi^\e_j h\|_{H^2}-\kappa_0K\|  h\|_{H^{(11-2s)/4}}
 \end{align*}
for all $-N+1\leq j\leq N$, $\tau\in[0,1],$ $\lambda\in\C$ with $\re \lambda\geq 1$, and  $h\in H^2(\R)$.
Summing up  over $j\in\{-N+1,\ldots, N\},$  we infer from Lemma \ref{L:EN2} that there exists a constant  $C\geq1$   with the property that
  \begin{align*}
   C\|  h\|_{H^{(11-2s)/4}}+C\|(\lambda-\Phi(\tau f))[h]\|_{H^1}\geq |\lambda|\cdot\|h\|_{H^1}+ \| h\|_{H^2}
 \end{align*}
for all   $\tau\in[0,1],$ $\lambda\in\C$ with $\re \lambda\geq 1$, and  $h\in H^2(\R)$.
Using \eqref{IP} together with  Young's inequality, we may find constants  $\kappa\geq 1$ and $\omega>0$ such  that 
 \begin{align}\label{DED}
   \kappa\|(\lambda-\Phi(\tau f))[h]\|_{H^1}\geq |\lambda|\cdot\|h\|_{H^1}+ \| h\|_{H^2}
 \end{align}
for   all   $\tau\in[0,1],$ $\lambda\in\C$ with $\re \lambda\geq \omega$, and  $h\in H^2(\R)$.
Furthermore, combining the property 
$$(\omega-\Phi(\tau f))\big|_{\tau=0}=\omega-\Phi(0)=\omega+\pi(-\p_x^2)^{1/2}\in {\rm Isom}(H^2(\R), H^1(\R)),$$
with \eqref{DED}, the method of continuity, cf. e.g.   \cite[Theorem 5.2]{GT01},
 yields that 
\begin{align}\label{DED2}
   \omega-\Phi(f)\in {\rm Isom}(H^2(\R), H^1(\R)).
 \end{align}
 The relations \eqref{DED} (with $\tau=1$),  \eqref{DED2}, and \cite[Corollary 2.1.3]{L95} lead us to the desired claim \eqref{AG}.
\end{proof}\medskip

We are now in a position to prove the well-posedness result Theorem \ref{MT1}.
\begin{proof}[Proof of Theorem \ref{MT1}]
 Let $s\in(3/2,2)$ and  $\ov s\in (3/2,s)$ be given.
Combining Lemma \ref{L31} and Theorem \ref{T2}, yields
\[
-\Phi\in C^{1-}(H^{\ov s}(\R),\kH(H^2(\R), H^1(\R))).
\]
   Setting $\alpha:=s-1$ and $\beta:=\ov s-1,$ we have  $0<\beta<\alpha<1$ and \eqref{IP} yields
 \[
  H^{\ov s}(\R)=[H^1(\R), H^2(\R)]_{\beta} \qquad\text{and}\qquad H^s(\R)=[H^1(\R), H^2(\R)]_{\alpha}.
 \]
It  follows now from Theorem \ref{T:A}
that \eqref{P} (or equivalently \eqref{P1}) possesses a maximally defined solution
 \begin{equation*} 
 f:=f(\cdot; f_0)\in C([0,T_+(f_0)),H^s(\R))\cap C((0,T_+(f_0)), H^2(\R))\cap C^1((0,T_+(f_0)), H^1(\R))
 \end{equation*}
with \begin{equation*} 
 f\in C^{s-\ov s}([0,T],H^{\ov s}(\R)) \qquad\text{for all $T<T_+(f_0)$}.
 \end{equation*}

 Concerning  uniqueness, we now show that any classical solution
 \begin{equation*} 
 \wt f\in C([0,\wt  T),H^s(\R))\cap C((0,\wt T), H^2(\R))\cap C^1((0,\wt T)), H^1(\R)),\qquad \wt T\in(0,\infty],
 \end{equation*}  
 satisfies 
  \begin{equation} \label{T:EEE}
 \wt f\in C^\eta([0,T],H^{\ov s}(\R))\qquad \text{for all $T\in(0,\wt T)$,}
 \end{equation}
where $\eta:=(s-\ov s)/s\in(0,s-\ov s).$ This proves then the uniqueness claim of Theorem \ref{MT1}. 
 We pick thus $T\in(0,\wt T)$ arbitrarily. 
 Then it follows directly from Lemma \ref{L21} $(i)$, that 
 \[
 \sup_{(0,T]} \|\p_t \wt f\|_{2}\leq C,
 \]
 hence $ \wt f\in{\rm BC}^1((0,T], L_2(\R))$. 
 Since $\wt f\in C([0,T], H^s(\R)),$ we conclude form \eqref{IP}, the previous bound, and the mean value theorem, that 
 \begin{align*}
  \|\wt f(t)-\wt f(s)\|_{H^{\ov s}}\leq  \|\wt f(t)-\wt f(s)\|_{2}^{1-\ov s/s}\|\wt f(t)-\wt f(s)\|_{H^s}^{\ov s/s}\leq C|t-s|^{\eta},\qquad t,s\in[0,T],
 \end{align*}
which proves \eqref{T:EEE}.

Assume now that    $T_+(f_0)<\infty$ and  
\[
\sup_{[0,T_+(f_0))}\|f(t)\|_{H^s}<\infty,
\]
Arguing as above, we find that 
\begin{align*}
  \|f(t)-f(s)\|_{H^{(s+\ov s)/2}}\leq C|t-s|^{(s-\ov s)/2s},\qquad t,s\in[0,T_+(f_0)).
 \end{align*} 
The criterion for global existence in Theorem \ref{T:A}  applied for $ \alpha:=(s+\ov s-2)/2$ and $ \beta:=\ov s-1$  implies that
the solution can be continued in  on an interval $[0,\tau)$ with $\tau>T_+(f_0)$.  
Moreover, it holds that
\[
 f\in C^{(s-\ov s)/2}([0,T],H^{\ov s}(\R))\qquad \text{for all $T\in(0,\tau)$.}
\]
The uniqueness claim in Theorem \ref{T:A} leads us to a contradiction. 
Hence our assumption was false and $T_+(f_0)=\infty$.
\end{proof}

%%%%%%%%%%%%%%%%%%%%%%%%%%%%%%%%%%%%%%%%%%%%%%%
%%%%%%%%%%%%%%%%%%%%%%%%%%%%%%%%%%%%%%%%%%%%%%
%%%%%%%%%%%%%%%%%%%%%%%%%%%%%%%%%%%%%%%%%%%%%%%
%%%%%%%%%%%%%%%%%%%%%%%%%%%%%%%%%%%%%%%%%%%%%%
%%%%%%%%%%%%%%%%%%%%%%%%%%%%%%%%%%%%%%%%%%%%%%%
%%%%%%%%%%%%%%%%%%%%%%%%%%%%%%%%%%%%%%%%%%%%%%
%%%%%%%%%%%%%%%%%%%%%%%%%%%%%%%%%%%%%%%%%%%%%%%
%%%%%%%%%%%%%%%%%%%%%%%%%%%%%%%%%%%%%%%%%%%%%%
\section{Instantaneous real-analyticity}\label{S4}
%%%%%%%%%%%%%%%%%%%%%%%%%%%%%%%%%%%%%%%%%%%%%%%
%%%%%%%%%%%%%%%%%%%%%%%%%%%%%%%%%%%%%%%%%%%%%%
%%%%%%%%%%%%%%%%%%%%%%%%%%%%%%%%%%%%%%%%%%%%%%%
%%%%%%%%%%%%%%%%%%%%%%%%%%%%%%%%%%%%%%%%%%%%%%
%%%%%%%%%%%%%%%%%%%%%%%%%%%%%%%%%%%%%%%%%%%%%%%
%%%%%%%%%%%%%%%%%%%%%%%%%%%%%%%%%%%%%%%%%%%%%%
%%%%%%%%%%%%%%%%%%%%%%%%%%%%%%%%%%%%%%%%%%%%%%%
%%%%%%%%%%%%%%%%%%%%%%%%%%%%%%%%%%%%%%%%%%%%%%
In this section we improve the  regularity of the solutions found in Theorems \ref{MT1} and \ref{MT1K}.
To this end we first show that the mapping $\Phi$ defined by \eqref{opa} is actually real-analytic, cf. Proposition \ref{P14}. 
As $[f\mapsto\Phi(f)]$ is not a Nemytskij type operator, we cannot use  classical results for such operators, as presented e.g. in \cite{RS96}.
Instead, we  directly estimate the rest of the associated Taylor series.
     We conclude the section with the proof of Theorem \ref{MT2} which is obtained, via Proposition \ref{P14},  from the  real-analyticity  property of the semiflow as stated   in Theorem \ref{T:A},
     applied in the context of a nonlinear evolution problem related to \eqref{P}.

\begin{prop}\label{P14}
Given $s\in(3/2,2)$, it holds that
\begin{align}\label{ARe}
 \Phi\in C^\omega(H^s(\R),\kL(H^2(\R), H^1(\R))).
\end{align}
\end{prop}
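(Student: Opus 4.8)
The plan is to show that $\Phi$ coincides locally with a convergent power series whose terms are the multilinear operators $A_{m,n}$ from Lemma~\ref{L21}, and that the remainder of the series vanishes in the operator norm of $\kL(H^2(\R),H^1(\R))$. The starting point is the algebraic identity, valid pointwise for $f_0,g\in H^s(\R)$ and $h\in H^2(\R)$,
\begin{equation*}
\frac{1}{1+\big(\delta_{[x,y]}(f_0+g)/y\big)^2}=\sum_{n\ge0}(-1)^n\Big(\frac{\delta_{[x,y]}(f_0+g)}{y}\Big)^{2n},
\end{equation*}
which converges whenever $\|f_0'+g'\|_\infty<1$; combining this with a binomial expansion of $(\delta_{[x,y]}f_0+\delta_{[x,y]}g)^{2n}$ exhibits, at least formally, $\Phi(f_0+g)[h]$ as a series in $g$ with coefficients given by finite linear combinations of operators of the form $A_{m,n}(f_0,\ldots,f_0)[g,\ldots,g,h']$ (with $m\le 2n$ factors equal to $g$ and $h'\in H^{s-1}(\R)$ in the last slot). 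Because we only need a \emph{local} real-analyticity statement, it suffices to produce, around each fixed $f_0\in H^s(\R)$, a ball on which this series converges in $\kL(H^2(\R),H^1(\R))$ and sums to $\Phi$.

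The key analytic input is the uniform multilinear estimate. First I would record a cleaner form of Lemma~\ref{L21}: tracking the dependence of the constant $C$ there on the ``$a_i$'' arguments, one gets a bound of the type
\begin{equation*}
\big\|A_{m,n}(f_0,\ldots,f_0)[g,\ldots,g,c]\big\|_{2}\le C_0\,(1+\|f_0\|_{H^s})^{2(n+1)}\,R^{\,m}\,\|c\|_2
\end{equation*}
for $\|g\|_{H^s}\le R$, where $C_0$ is universal. (This is exactly the bookkeeping done inside the proof of Lemma~\ref{L21}: the $B_{n,m}$ operators have norm $\le C\prod\|a_i'\|_\infty$ times a constant depending polynomially on $\max\|a_i'\|_\infty$, and the $A^1$-part is estimated by the same kind of polynomial-in-$\|f_0\|_{H^r}$ bound.) The number of terms $A_{m,n}$ produced at order $n$ in the expansion is bounded by $\binom{2n}{m}\le 2^{2n}$, so summing over $m$ and over $n$ gives a geometric majorant $\sum_n C_0(1+\|f_0\|_{H^s})^{2(n+1)}(4R)^{2n}$, which converges once $R$ is small enough depending on $\|f_0\|_{H^s}$. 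The same argument applied to the derivative formula \eqref{N1}, i.e. to $(\Phi(f)[h])'=A_{0,0}(f)[h'']-2A_{2,1}(f,f)[f,f',h']$, together with Lemma~\ref{L21'} for the lower-order piece, upgrades the convergence from $\kL(H^2(\R),L_2(\R))$ to $\kL(H^2(\R),H^1(\R))$. Thus the series converges in operator norm on a neighbourhood of $f_0$.

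It then remains to identify the sum of the series with $\Phi(f_0+g)$ and to conclude analyticity. For the identification I would estimate the $N$-th remainder directly: writing $r_N(x,y)$ for the tail of the scalar expansion of $(1+(\delta_{[x,y]}(f_0+g)/y)^2)^{-1}$, one has an explicit closed form (the tail of a geometric series), and the resulting operator remainder is again of the form $A_{m,n}$ with one extra factor, hence controlled by the same majorant tail, which tends to $0$. This shows $\Phi(f_0+g)[h]=\sum_{n}(\text{terms of order }n)$ with convergence in $H^1(\R)$, uniformly for $h$ in the unit ball of $H^2(\R)$ and $g$ in a small ball. Since each term of order $n$ is a bounded $n$-homogeneous polynomial $H^s(\R)\to\kL(H^2(\R),H^1(\R))$ (being a finite sum of continuous multilinear maps, by Lemma~\ref{L21}(i) and Lemma~\ref{L21'}(i)) and the series of their norms converges, the standard characterization of real-analytic maps between Banach spaces (a locally bounded map given locally by a norm-convergent power series of bounded homogeneous polynomials is real-analytic) yields \eqref{ARe}. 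The main obstacle, and the only place requiring genuine care, is the first step: making the constant in Lemma~\ref{L21} explicitly polynomial in $\|f_0\|_{H^s}$ and the factor-count explicit in $n$, so that the majorant series genuinely converges; the convergence radius will unavoidably shrink as $\|f_0\|_{H^s}\to\infty$, but that is harmless since only local analyticity is claimed.
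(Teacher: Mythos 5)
Your plan has a genuine gap at the very first step, and it is a structural one rather than a bookkeeping issue. You expand
\begin{equation*}
\frac{1}{1+\big(\delta_{[x,y]}(f_0+g)/y\big)^2}=\sum_{n\ge0}(-1)^n\Big(\frac{\delta_{[x,y]}(f_0+g)}{y}\Big)^{2n},
\end{equation*}
which is the geometric series for $(1+u^2)^{-1}$ centered at $u=0$. Its radius of convergence is $1$, so it only converges pointwise in $(x,y)$ when $\|f_0'+g'\|_\infty<1$. For a general $f_0\in H^s(\R)$ — in particular any $f_0$ with $\|f_0'\|_\infty\geq1$ — this expansion fails for \emph{every} $g$, no matter how small; there is no neighbourhood of such an $f_0$ on which the formal series even makes sense. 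You acknowledge that the radius shrinks with $\|f_0\|_{H^s}$, but it does not merely shrink — it collapses to zero exactly on the set of $f_0$ with $\|f_0'\|_\infty\geq1$, so this route cannot establish analyticity at an arbitrary $f_0\in H^s(\R)$, which is what \eqref{ARe} requires. The paper circumvents this by treating $\phi(u):=(1+u^2)^{-1}$ as a globally real-analytic function of one variable and Taylor-expanding it around the \emph{moving base point} $\delta_{[x,y]}f_0/y$ rather than around $0$: one defines $\p^n\Phi(f_0)[f_1,\ldots,f_n][h]$ directly by inserting $\phi^{(n)}(\delta_{[x,y]}f_0/y)$ under the integral (with $\phi^{(n)}$ made explicit in Lemma~\ref{L:IA1}), and then proves real-analyticity by estimating the integral Taylor remainder. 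That centering choice is the key idea that your proposal is missing.

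A secondary issue is that your claimed bound
\begin{equation*}
\big\|A_{m,n}(f_0,\ldots,f_0)[g,\ldots,g,c]\big\|_{2}\le C_0\,(1+\|f_0\|_{H^s})^{2(n+1)}\,R^{m}\,\|c\|_2
\end{equation*}
is not at all a straightforward ``tracking'' of the constant in Lemma~\ref{L21}. In that proof the Lipschitz constant $L$ of the auxiliary map $A$ and the period of $\wt F$ both depend on $n$ and $m$, and the Fourier coefficients $\alpha_p$ must be shown to decay sufficiently fast with constants that do not blow up faster than $C^n$ times a polynomial in $n$. Getting that right is precisely the content of Lemma~\ref{L:IA3} (yielding $C^n n^4\max\{1,\|f_\tau\|_{H^s}^4\}\|f\|_{H^s}^{n+1}$) and Lemma~\ref{L:IA4}; the paper essentially has to redo the Fourier-series argument from scratch with explicit integration-by-parts bounds, not merely quote Lemma~\ref{L21}. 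So even restricted to the regime $\|f_0'\|_\infty<1$, the convergence of your majorant series is not a corollary of Lemma~\ref{L21} alone — it needs these $n$-uniform estimates.
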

\begin{proof}
Let $\phi:\R\to\R$ be the map defined by $\phi(x):=(1+x^2)^{-1},$ $ x\in\R.$
Then, given $f_0\in H^s(\R)$, it holds that
\begin{align*}
   \Phi(f_0)[h](x)=\PV\int_\R   \frac{\delta_{[x,y]}h'}{y} \phi\Big(\frac{\delta_{[x,y]}f_0}{y}\Big)\, dy,\qquad h\in H^2(\R).
 \end{align*}
 Given $n\in\N$, we let
 \begin{align*}
  \p^n\Phi(f_0)[f_1, \ldots, f_n][h](x):=&\PV\int_\R \frac{\delta_{[x,y]}h'}{y} \Big(\prod_{i=1}^n\frac{\delta_{[x,y]}f_i}{y}\Big)\phi^{(n)}\Big(\frac{\delta_{[x,y]}f_0}{y}\Big) \, dy\\[1ex]
  =&\sum_{k=0, n+k\in2\N}^n a_k^nA_{n+k,n}( f_0,\ldots,f_0 )[\underset{\text{$k$-times}}{\underbrace{f_0,\ldots,f_0}}, f_1,\ldots,f_n,h'](x), 
 \end{align*}
  for $ f_i\in H^s(\R), 1\leq i\leq n$, $ h\in H^2(\R),$ and $x\in\R$, where $a^ n_k, n\in\N, 0\leq k\leq n$ are defined in Lemma \ref{L:IA1}.
 Arguing as in the proof of Lemma \ref{L31}, it follows from the  Lemmas \ref{L21} and \ref{L21'} that $\p^n\Phi(f_0)\in \kL^n_{sym}(H^s(\R),\kL(H^2(\R), H^1(\R))),$  that is $\p^n\Phi(f_0)  $ is a bounded $n$-linear and symmetric operator.
   
   Moreover, given  $f_0,f\in H^s(\R)$, $n\in\N^*$, and $h\in H^2(\R)$,   Fubini's theorem combined with  Lebesgue's dominated convergence theorem and  the continuity of the mapping 
\begin{align*}
 \Big[\tau\mapsto \PV\int_\R \frac{\delta_{[\cdot,y]}h'}{y} \Big(\frac{\delta_{[\cdot,y]}f}{y}\Big)^{n+1} \phi^{(n+1)}\Big(\frac{\delta_{[\cdot,y]}(f_0+\tau f)}{y}\Big)\, dy\Big]:[0,1]\to H^1(\R),
\end{align*} 
yield that
   \begin{align*}
    &\hspace{-1cm}\Phi(f_0+f)[h](x)-\sum_{k=0}^n\frac{\p^k\Phi(f_0)[f]^k[h](x)}{k!}\\[1ex]
    &=\PV\int_\R\frac{\delta_{[x,y]}h'}{y} \Big(\frac{\delta_{[x,y]}f}{y}\Big)^{n+1}\int_0^1 \frac{(1-\tau)^n}{n!}\phi^{(n+1)}\Big(\frac{\delta_{[x,y]}(f_0+\tau f)}{y}\Big)\, d\tau\, dy\\[1ex]
     &=\int_0^1\frac{(1-\tau)^n}{n!} \PV\int_\R \frac{\delta_{[x,y]}h'}{y} \Big(\frac{\delta_{[x,y]}f}{y}\Big)^{n+1} \phi^{(n+1)}\Big(\frac{\delta_{[x,y]}(f_0+\tau f)}{y}\Big)\, dy\, d\tau,
   \end{align*}
and
   \begin{align}
    &\hspace{-1cm}\Big\|\Phi(f_0+f)[h] -\sum_{k=0}^n\frac{\p^k\Phi(f_0)[f]^k[h] }{k!}\Big\|_{H^1}\nonumber\\[1ex]
    \leq& \frac{1}{n!}\max_{\tau\in[0,1]}\Big\|\PV\int_\R \frac{\delta_{[\cdot,y]}h'}{y} \Big(\frac{\delta_{[\cdot,y]}f}{y}\Big)^{n+1} \phi^{(n+1)}\big(\delta_{[\cdot,y]}f_\tau/y\big) \, dy\Big\|_{H^1},\label{TBE}
   \end{align}
where $f_\tau:=f_0+\tau f, $ $0\leq \tau \leq1.$ In order to estimate the right-hand side of \eqref{TBE} we note that
\begin{align}
   &\hspace{-0.5cm} \Big\|\PV\int_\R \frac{\delta_{[\cdot,y]}h'}{y} \Big(\frac{\delta_{[\cdot,y]}f}{y}\Big)^{n+1} \phi^{(n+1)}\big(\delta_{[\cdot,y]}f_\tau/y\big) \, dy\Big\|_{H^1}\nonumber\\[1ex]
    &\leq\sum_{k=0, n+k+1\in2\N}^{n+1} |a_k^{n+1}|
    \big\|A_{n+k+1,n+1}( f_\tau,\ldots,f_\tau )[\underset{\text{$k$-times}}{\underbrace{f_\tau,\ldots,f_\tau}}, \underset{\text{$n+1$-times}}{\underbrace{f,\ldots,f}},h']\big\|_{H^1}\nonumber
    \end{align}
    \begin{align}
     &\leq\sum_{k=0, n+k+1\in2\N}^{n+1} |a_k^{n+1}|
    \big\|A_{n+k+1,n+1}( f_\tau,\ldots,f_\tau )[\underset{\text{$k$-times}}{\underbrace{f_\tau,\ldots,f_\tau}}, \underset{\text{$n+1$-times}}{\underbrace{f,\ldots,f}},h']\big\|_{2}\nonumber\\[1ex]
     &\hspace{0.5cm}+\sum_{k=0, n+k+1\in2\N}^{n+1} |a_k^{n+1}|
    \big\|A_{n+k+1,n+1}( f_\tau,\ldots,f_\tau )[\underset{\text{$k$-times}}{\underbrace{f_\tau,\ldots,f_\tau}}, \underset{\text{$n+1$-times}}{\underbrace{f,\ldots,f}},h'']\big\|_{2}\nonumber\\[1ex]
    &\hspace{0.5cm}+k\sum_{k=0, n+k+1\in2\N}^{n+1} |a_k^{n+1}|
    \big\|A_{n+k+1,n+1}( f_\tau,\ldots,f_\tau )[\underset{\text{$k-1$-times}}{\underbrace{f_\tau,\ldots,f_\tau}}, \underset{\text{$n+1$-times}}{\underbrace{f,\ldots,f}},f_\tau',h']\big\|_{2}\nonumber\\[1ex]
    &\hspace{0.5cm}-2(n+2)\sum_{k=0, n+k+1\in2\N}^{n+1} |a_k^{n+1}|
    \big\|A_{n+k+3,n+2}(f_\tau,\ldots,f_\tau)[\underset{\text{$k+1$-times}}{\underbrace{f_\tau,\ldots,f_\tau}}, \underset{\text{$n+1$-times}}{\underbrace{f,\ldots,f}},f_\tau',h']\big\|_{2}.\label{53}
   \end{align}
Combining the results of Lemmas \ref{L:IA1}-\ref{L:IA4}, we conclude that there exists an integer $p>0$ and a positive constant  $C$ (depending only on $\|f_0\|_{H^s}$) such that for all  $f\in H^s(\R)$ with $\|f\|_{H^s}\leq 1$  and all $n\geq 3$ we have
\begin{align*}
    \Big\|\Phi(f_0+f)  -\sum_{k=0}^n\frac{\p^k\Phi(f_0)[f]^k  }{k!}\Big\|_{\kL(H^2(\R), H^1(\R))}\leq C^{n+1}n^p\|f\|_{H^s}^{n+1}.
   \end{align*}
   The claim follows.
\end{proof}

The following technical results  are used in the proof of Proposition \ref{P14}.

\begin{lemma}\label{L:IA1}
 Let $\phi:\R\to\R$ be defined by $\phi(x):=(1+x^2)^{-1},$ $ x\in\R.$
 Given $n\in\N$, it holds that
 \begin{align*}
  \phi^{(n)}(x)=\frac{1}{(1+x^2)^{n+1}} \sum_{k=0}^na^n_kx^k,
 \end{align*}
where the coefficients  $a^n_k\in\R$ satisfy $|a^n_k|\leq 4^{n}(n+2)!$ for all $0\leq k\leq n.$
Moreover,  $a_k^n=0$ if $n+k\not\in 2\N.$
\end{lemma}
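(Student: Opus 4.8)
The plan is to establish the formula for $\phi^{(n)}$ by induction on $n$, simultaneously controlling the coefficients $a_k^n$ and the parity constraint. For $n=0$ we have $\phi(x)=(1+x^2)^{-1}$, so $a_0^0=1$ and all claims hold trivially. For the inductive step, assuming $\phi^{(n)}(x)=(1+x^2)^{-(n+1)}\sum_{k=0}^n a_k^n x^k$, I would differentiate once:
\[
\phi^{(n+1)}(x)=\frac{\sum_{k=1}^{n}k a_k^n x^{k-1}}{(1+x^2)^{n+1}}-\frac{2(n+1)x\sum_{k=0}^{n} a_k^n x^{k}}{(1+x^2)^{n+2}},
\]
and bring everything over the common denominator $(1+x^2)^{n+2}$. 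Multiplying the first term by $(1+x^2)$ gives $\sum_{k=1}^n k a_k^n x^{k-1}(1+x^2) = \sum_{k=1}^n k a_k^n x^{k-1}+\sum_{k=1}^n k a_k^n x^{k+1}$, and the second contributes $-2(n+1)\sum_{k=0}^n a_k^n x^{k+1}$. Collecting the coefficient of $x^j$ for $0\le j\le n+1$ yields the recursion
\[
a_j^{n+1}=(j+1)a_{j+1}^n+\big((j-1)-2(n+1)\big)a_{j-1}^n,
\]
with the convention that $a_i^n=0$ for $i<0$ or $i>n$. This shows $\phi^{(n+1)}$ has the claimed form.

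For the parity claim, I would note that the recursion expresses $a_j^{n+1}$ in terms of $a_{j+1}^n$ and $a_{j-1}^n$, i.e. it only connects indices $j$ and $n+1$ of the same parity as the indices $j\pm1$ and $n$: since $(j+1)+n$ and $(j-1)+n$ have the same parity as $j+n+1$, the hypothesis that $a_i^n=0$ whenever $i+n$ is odd propagates to $a_j^{n+1}=0$ whenever $j+(n+1)$ is odd. (Alternatively, $\phi$ is even, so $\phi^{(n)}$ has parity $(-1)^n$, forcing the stated vanishing; either argument works.)

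For the bound $|a_k^n|\le 4^n(n+2)!$, I would argue by induction using the recursion. From $|a_j^{n+1}|\le (j+1)|a_{j+1}^n|+\big(2(n+1)+1-j\big)|a_{j-1}^n|$ and the inductive bound $|a_i^n|\le 4^n(n+2)!$ for all $i$, together with $j+1\le n+2$ and $2(n+1)+1-j\le 2n+3$, we get
\[
|a_j^{n+1}|\le (n+2)\cdot 4^n(n+2)!+(2n+3)\cdot 4^n(n+2)!=(3n+5)\cdot 4^n(n+2)!.
\]
Since $3n+5\le 4(n+3)$ for $n\ge 0$, this is bounded by $4^{n+1}(n+3)!=4^{n+1}\big((n+1)+2\big)!$, completing the induction. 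The main (very mild) obstacle is simply bookkeeping the index shifts in the recursion carefully so that the parity propagation and the coarse coefficient bound line up; there is no real analytic difficulty.
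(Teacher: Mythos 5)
Your proof is correct and follows essentially the same route as the paper's: induction via the same differentiation-and-common-denominator recursion, the same parity propagation, and the same coarse coefficient bound landing on $4^{n+1}(n+3)!$. The only cosmetic difference is that you bound all coefficients $a_j^{n+1}$ in one stroke via $(3n+5)\cdot 4^n(n+2)!\le 4^{n+1}(n+3)!$, whereas the paper treats the index ranges $j\in\{0,1\}$, $2\le j\le n-1$, $j\in\{n,n+1\}$ separately (and starts the induction at $n=3$ purely so that the middle range is nonempty), which changes nothing of substance.
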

\begin{proof}
 The claim for $n\in\{0,1,2,3\}$ is obvious.
 Assume that the claim holds for some integer $n\geq3$. Since
  \begin{align*}
  (1+x^2)^{n+2}\phi^{(n+1)}(x)= (1+x^2) \sum_{k=1}^n k a^n_kx^{k-1}- 2(n+1)x \sum_{k=0}^na^n_kx^k,
 \end{align*}
 the coefficient $a^{n+1}_{k}, 0\leq k\leq n+1$, of $x^k$ satisfies
 \begin{align*}
  |a^{n+1}_{n+1}|\leq &n|a_n^n|+2(n+1)|a_n^n|\leq 4(n+1)|a_n^n|\leq 4^{n+1}(n+3)!,\\[1ex]
  |a^{n+1}_{n}|\leq &(n-1)|a_{n-1}^n|+2(n+1)|a_{n-1}^n|=0,
 \end{align*}
and for $n-1\geq k\geq 2$ we have
\begin{align*}
  |a^{n+1}_{k}|\leq &(k+1)|a_{k+1}^n|+(k-1)|a_{k-1}^n|+2(n+1)|a_{k-1}^n|\leq 4^{n+1}(n+3)!,
 \end{align*}
 while
  \begin{align*}
  |a^{n+1}_{1}|\leq &2|a_2^n|+2(n+1)|a_0^n|\leq  4^{n+1}(n+3)!,\\[1ex]
  |a^{n+1}_{0}|\leq & |a_{1}^n|\leq 4^{n+1}(n+3)!.
 \end{align*}
 The conclusion is now obvious.
\end{proof}

In the next lemma we estimate the first two terms that appear on the right-hand side of  \eqref{53}.

\begin{lemma}\label{L:IA3}
  Let $n,k\in\N$ satisfy $n\geq 3 $ and $0\leq k\leq n+1$,  and let $s\in(3/2,2).$ 
Given $f,f_\tau\in H^s(\R)$, it holds
 \begin{align}\label{esy2}
 \|A_{n+k+1,n+1}( f_\tau,\ldots,f_\tau )[\underset{\text{$k$-times}}{\underbrace{f_\tau,\ldots,f_\tau}}, \underset{\text{$n+1$-times}}{\underbrace{f,\ldots,f}},\,\cdot\,]\|_{\kL(L_2(\R))}\leq
 C^n n^{4}\max\{1, \|f_\tau\|^{4}_{H^s}\} \|f\|^{n+1}_{H^s}, 
 \end{align}
 with a   constant $C\geq 1$ independent of $n, k, f, $ and $ f_\tau$.
\end{lemma}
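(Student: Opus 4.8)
The plan is to reduce the claimed estimate \eqref{esy2} to the multilinear bounds of Remark \ref{R2}, namely to the operators $B_{n,m}$, and to track carefully how the constants depend on $n$ and $k$. First I would write the operator $A_{n+k+1,n+1}(f_\tau,\ldots,f_\tau)[f_\tau,\ldots,f_\tau,f,\ldots,f,\cdot\,]$ in the form of a $B$-operator: the denominator contributes $n+1$ factors of the shape $1+\big(\delta_{[x,y]}f_\tau/y\big)^2$, while the numerator carries $k$ factors $\delta_{[x,y]}f_\tau/y$ and $n+1$ factors $\delta_{[x,y]}f/y$; hence the operator equals $B_{(n+1)+k,\,n+1}$ evaluated at the tuple consisting of $k$ copies of $f_\tau$, $n+1$ copies of $f$, and $n+1$ copies of $f_\tau$ in the denominator slots. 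By the estimate quoted in Remark \ref{R2}, its $\kL(L_2(\R))$-norm is bounded by a constant depending only on the number of factors and on $\max\|f_\tau'\|_\infty$ in the denominator slots, times $\prod\|a_i'\|_\infty$ over the numerator factors; this product is $\|f_\tau'\|_\infty^{\,k}\,\|f'\|_\infty^{\,n+1}$. Using $H^s(\R)\hookrightarrow {\rm BC}^{s-1/2}(\R)\hookrightarrow W^1_\infty(\R)$ we bound each $\|f'\|_\infty\le C_0\|f\|_{H^s}$ and likewise for $f_\tau$.

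The one point requiring genuine care is the \emph{dependence of the $B$-constant on the number of factors}, since here that number grows linearly with $n$. I would revisit the proof of Lemma \ref{L21} (equivalently the argument sketched in Remark \ref{R2}): there one expands the smooth periodic function $\widetilde F$ in a Fourier series with rapidly decreasing coefficients $(\alpha_p)_p$ and sums the operator norms $C(1+|p|)$ of the resulting kernels $K_p$ of the type \eqref{FFF}. For the present $F(u_1,\ldots,v_1,\ldots)=\prod v_i/\prod(1+u_i^2)$ in $M:=(n+1)+k+(n+1)$ variables (of which $k+n+1$ are "numerator" variables $v_i$ and $n+1$ are "denominator" variables $u_i$), one must check that the $\ell^1$-type sum $\sum_p |\alpha_p|(1+|p|)$ grows at most polynomially in $M$, hence polynomially in $n$. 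This is where the explicit bound $|a^n_k|\le 4^n(n+2)!$ from Lemma \ref{L:IA1} and the structure of $\phi$ are implicitly used: the relevant smooth truncation and its derivatives have controlled size, so the Fourier coefficients decay fast enough that the sum is dominated by $C^M$ times a fixed power of $M$. Carrying this bookkeeping through yields a constant of the form $C^n n^p$ for some fixed $p$; the exponent $4$ in the statement is simply a convenient explicit value of $p$, together with the factor $\max\{1,\|f_\tau\|_{H^s}^4\}$ absorbing the Lipschitz-constant dependence of the $B$-estimate (which depends on $\max\|f_\tau'\|_\infty$, bounded by $\|f_\tau'\|_\infty$ itself, and in the Fourier-expansion step enters only through the period $L=\sqrt{\,\cdot\,+(n+1)\|f_\tau'\|_\infty^2}$, contributing at worst a power).

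The main obstacle, then, is \textbf{not} the singular-integral boundedness itself — that is already furnished by \eqref{FFF} and Remark \ref{R2} — but rather verifying that the implied constants grow only polynomially (indeed, are absorbed into $C^n n^4$) as the multilinearity degree $M\sim 2n$ increases. Concretely I would: (1) fix a smooth compactly supported cutoff $\rho$ with $\rho\equiv 1$ on a neighbourhood of the relevant box of radius $L$ and bounded, together with its derivatives, independently of the data, and set $\widetilde F := \rho\cdot F$ extended periodically with period $4L$; (2) estimate the Fourier coefficients $\alpha_p$ of $\widetilde F$ by repeated integration by parts, obtaining $|\alpha_p|\le C_N L^{N}(1+|p|)^{-N}\|\widetilde F\|_{C^N}$ for every $N$, where $\|\widetilde F\|_{C^N}$ is bounded using $|\phi^{(j)}|\le C 4^j j!$ and the product rule over $M$ factors, giving a bound of the form $C^M M^N$; (3) choose $N=N(M)$ (e.g. $N=M+2$) to make $\sum_p|\alpha_p|(1+|p|)$ convergent with total bound $C^M M^{CM}$ — and here one must be slightly cleverer, choosing $N$ fixed (independent of $M$, say $N=3$) and instead exploiting that $\|\widetilde F\|_{C^3}\le C^M$ only, which suffices since $\sum_p(1+|p|)^{-3}(1+|p|)<\infty$ with an absolute constant; the combinatorial growth in the number of variables then contributes only the $C^M=C^{2n+k+2}$ factor and a fixed power of $M$ from counting multi-indices, which is dominated by $C^n n^4$ after renaming $C$. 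Combining (1)–(3) with $\|f'\|_\infty^{n+1}\le (C_0\|f\|_{H^s})^{n+1}$ gives \eqref{esy2}.
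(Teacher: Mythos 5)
You have correctly identified the crux of the matter -- the constant in Remark~\ref{R2} depends on the multilinearity degree, so one must track how the Fourier--series argument behind \eqref{FFF} scales with $n$ -- but the specific route you propose to resolve it does not close. In step (3) you suggest taking a \emph{fixed} number $N=3$ of derivatives and invoking $\sum_{p}(1+|p|)^{-3}(1+|p|)<\infty$, but that sum is over $p\in\Z^{M}$ with $M=(2n+k+2)$ growing linearly in $n$, and $\sum_{p\in\Z^M}(1+|p|)^{-2}$ already diverges as soon as $M\geq 2$. To make the lattice sum converge one needs $N>M+1$, and then the term $\|\wt F\|_{C^N}$ contributes factors growing super-exponentially in $n$ (roughly like $M^M$), which is not absorbed into $C^n n^4$. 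So the argument as written has a genuine gap. (One could salvage a high-dimensional approach by exploiting that $F$ is a tensor product $\prod_i v_i\cdot\prod_i\phi(u_i)$, so that the Fourier coefficients factorize and the lattice sum becomes a product of one-dimensional sums, each bounded by a fixed constant; but you do not invoke this structure.)

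The paper avoids the difficulty altogether by a much cheaper observation: since the $n+1$ arguments in the denominator slots and the $n+1$ numerator slots occupied by $f$ are all the \emph{same} function, the kernel of $A_{n+k+1,n+1}^2$ can be written as $K(x,y)=y^{-1}F(\delta_{[x,y]}A/y)$ with a \emph{two-variable} function
\[
F(x_1,x_2)=\frac{x_1^{n+1}x_2^{k}}{(1+x_2^2)^{n+2}},\qquad A=(f,f_\tau).
\]
The periodized extension $\wt F$ is then expanded in a Fourier series indexed by $\Z^2$, not $\Z^M$; a fixed four-fold integration by parts in each of the two variables gives $|\alpha_p|\lesssim 2^n n^4\max\{1,a_2^4\}a_1^{n+1}(1+|p|)^{-4}$, the hypothesis $n\geq3$ is used precisely to ensure $x_1^{n+1}\varphi(x_1/a_1)$ is $C^4$, and $\sum_{p\in\Z^2}(1+|p|)^{-3}<\infty$ with an absolute constant closes the estimate. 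Together with the direct bound on the multiplication part $A^1_{n+k+1,n+1}$ this gives exactly $C^n n^4\max\{1,\|f_\tau\|_{H^s}^4\}\|f\|_{H^s}^{n+1}$. This two-variable reduction is the essential idea your proposal is missing; you correctly diagnosed the obstacle but did not find the device that makes the bookkeeping come out.
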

\begin{proof}
Similarly as in the proof of Lemma \ref{L21} we write   
\[
A_{n+k+1,n+1}( f_\tau,\ldots,f_\tau )[ f_\tau,\ldots,f_\tau ,  f,\ldots,f ,\cdot]=M-S,
\]
where $M$ is the multiplication operator
\[
M[h](x):=h(x)\PV\int_\R\frac{1}{y}\Big(\frac{\delta_{[x,y]}f}{y}\Big)^{n+1}  \frac{\big(\delta_{[x,y]}f_\tau/y\big) ^{k} }{ \big[1+\big(\delta_{[x,y]}f_\tau/y\big)^2\big]^{n+2}}\, dy
\]
and $S$ is the singular integral operator
\[
S[h](x):=\PV\int_\R \Big(\frac{\delta_{[x,y]}f}{y}\Big)^{n+1}  \frac{\big(\delta_{[x,y]}f_\tau/y\big) ^{k} }{ \big[1+\big(\delta_{[x,y]}f_\tau/y\big)^2\big]^{n+2}}\frac{h(x-y)}{y}\, dy 
\]
for   $h\in L_2(\R)$.
Arguing as in proof of Lemma \ref{L21}, it follows that
\begin{align}\label{esy2a}
 \|M\|_{\kL(L_2(\R))}\leq nC^n\max\{1, \|f_\tau\|_{H^s}\} \|f\|^{n+1}_{H^s} 
 \end{align}
 with a   constant $C\geq1$ independent of $n, k, f, $ and $ f_\tau$.
 
 In order to deal with the operator $S$ we consider the functions $F:\R^{2}\to\R$ and $A:\R\to \R^{2}$ defined by
  \[
 F(x_1,x_2):=\cfrac{x_1^{n+1} x_2^{k}}{(1+x_2^2)^{n+2}},\qquad A:=(A_{1}, A_2):=(f,f_\tau).
 \]
 The function $F$  is   smooth,  $A$  is Lipschitz continuous, and we set
 \[
 a_j:=\|A_j'\|_\infty, \qquad 1\leq j\leq 2.
 \]
Since  $S$ is the singular integral operator with kernel
 \[
 K(x,y):=\frac{1}{y}F\Big(\frac{\delta_{[x,y]}A}{y}\Big),\qquad x\in\R,\, y\neq0,
 \]
 and $|\delta_{[x,y]}A_j/y|\leq a_j  $ for $1\leq j\leq 2$, it is natural to  introduce a smooth periodic function $\wt F$ on $\R^2$, which is  $4a_j$-periodic in the variable $x_j$, $1\leq j\leq 2$, and which matches $F$ on $\prod_{j=1}^2[-a_j,a_j]$.
 More precisely, we choose $\varphi\in C^\infty_0(\R,[0,1])$   with $\varphi=1 $ on $[|x|\leq1]$ and  $\varphi=0 $ on $[|x|\geq 2]$
 and we define $\wt F$ to be the periodic extension of 
 \[
  \Big[(x_1,x_2)\mapsto  F(x_1,x_2)\prod_{j=1}^{2}\varphi\big(\frac{x_j}{a_j}\big)\Big]:Q\to\R,
 \]
 where $Q:= \prod_{j=1}^2[-2a_j,2a_j]$.
We now expend $\wt F$ by its   Fourier series 
\[
\wt F(x_1,x_2)=\sum_{p\in \Z^{2}}\alpha_p \exp\Big(i\sum_{j=1}^2\frac{p_jx_j}{T_j}\Big),
\]
where
\[
T_j:=\frac{2a_j}{\pi},\qquad \alpha_p:=\frac{1}{4^2 a_1a_2}\int_{Q} \wt F(x_1,x_2)\exp\Big(-i\sum_{j=1}^2\frac{p_jx_j}{T_j}\Big)\, d(x_1,x_2), \qquad p\in\Z^2,
\]
and observe that
\[
K(x,y)=\frac{1}{y}\wt F\Big(\frac{\delta_{[x,y]}A}{y}\Big)=\sum_{p\in \Z^{2}} \alpha_p K_p(x,y), \qquad x\in\R,\,  y\neq 0,
\]
with 
\[
K_p(x,y):=\frac{1}{y}\exp\Big(i\frac{\delta_{[x,y]}\big(\sum_{j=1}^2\frac{p_j}{T_j}A_j\big)}{y}\Big) ,\qquad x\in\R,\, y\neq0,\, p\in \Z^{2}.
\]
The kernels $K_p $ define   operators in $\kL(L_2(\R))$ of type \eqref{FFF} and the norms of these operators can be estimated from above by 
$$C\Big(1+  \Big\|\sum_{j=1}^2\frac{p_j}{T_j}A_j'\Big\|_\infty\Big)\leq C(1+|p|) ,\qquad   p\in \Z^{2}.$$
Since  $\sum_{p\in\Z^2} (1+|p|^3)^{-1}<\infty$ we get 
\[
\|S\|_{\kL(L_2(\R))}\leq C\sum_{p\in \Z^{2}} |\alpha_p| (1+|p|)\leq C\sup_{p\in\Z^2}\big[(1+|p|^{4})|\alpha_p|\big],
\]
We estimate next the quantity $\sup_{p\in\Z^2}(1+|p|^{4})|\alpha_p|$.
To this end we write  
\begin{align*}
 \alpha_p=\frac{1}{4^2}\prod_{j=1}^2  \frac{I_j}{a_j},
\end{align*}
where
\begin{align*}
 & I_1:=\int_{-2a_1}^{2a_1} x_1^{n+1}\varphi\big(\frac{x_1}{a_1}\big)e^{-i \frac{p_1x_1}{T_1}}\, dx_1,\qquad I_2:=\int_{-2a_2}^{2a_2} \frac{x_2^{k}}{(1+x_2^2)^{n+2}}\varphi\big(\frac{x_2}{a_2}\big)e^{-i \frac{p_2x_2}{T_2}}\, dx_2.
 \end{align*}
Since $\varphi=0$ in $[|x|\geq 2] $ and $n\geq3$, integration by parts leads us, in the case when $p_1\neq0, $ to
\begin{align}\label{PP11}
   |I_1|\leq\Big(\frac{T_1}{|p_1|}\Big)^{4}\int_{-2a_1}^{2a_1} \Big|\Big( x_1^{n+1}\varphi\big(\frac{x_1}{a_1}\big)\Big)^{(4)}\Big|\, dx_1  \leq C  \frac{2^n  n^{4} a_1^{n+2}}{p_1 ^{4}} ,  
    \end{align}
    and similarly, since $x_2\leq  1+x_2^2 ,$ we find for $p_2\neq0$ that
    \begin{align}\label{PP12}
   |I_2|\leq& C   \frac{n^{4}\max\{a_2,a_2^{5}\}}{p_2^{4}}.    
 \end{align}
 The estimates 
 \begin{align}\label{PP14}
  |I_1|\leq C     2^n  a_1^{n+2},\qquad  |I_2|\leq C a_2, 
 \end{align}
are valid for all $p\in\Z^2$.
Combining \eqref{PP11}-\eqref{PP14}, we arrive at  
 \[
 \sup_{p\in\Z^2}(1+|p|^4)|\alpha_p |\leq C 2^n n^{4}\max\{1, a_2^4\} a_1^{n+1}, 
 \]
 which leads us to
 \[
 \|S\|_{\kL(L_2(\R))}\leq C 2^n n^{4}\max\{1, \|f_\tau'\|^{4}_\infty\} \|f'\|^{n+1}_\infty\leq   n^4 C^n\max\{1, \|f_\tau\|_{H^s}^4\} \|f\|^{n+1}_{H^s}.
 \]
This inequality together with  \eqref{esy2a} proves the desired claim.
\end{proof}\medskip

In the next lemma we estimate the last  two terms on the right-hand side of \eqref{53} in the proof of Proposition \ref{P14}.
\begin{lemma}\label{L:IA4}
  Let $n,k\in\N$ satisfy $n\geq1$ and $0\leq k\leq n+1$. Let further $l\in\{0,1\} $ and  $s\in(3/2,2).$ 
Given $f,f_\tau \in H^s(\R)$, it holds
 \begin{align}\label{esy2b}
 \|A_{n+k+1+2l,n+1+l}
 ( f_\tau,\ldots,f_\tau )[\underset{\text{$k-1+2l$-times}}{\underbrace{f_\tau,\ldots,f_\tau}}, \underset{\text{$n+1$-times}}{\underbrace{f,\ldots,f}},f_\tau',\,\cdot\,]\|_{\kL(H^1(\R),L_2(\R))}\leq C^n \|f_\tau\|_{H^s} \|f\|^{n+1}_{H^s}, 
 \end{align}
 with a   constant $C\geq 1$ independent of $n, k, f, $ and $ f_\tau$.
\end{lemma}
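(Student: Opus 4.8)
The plan is to estimate the operator directly by Minkowski's integral inequality, along the lines of the proof of Lemma~\ref{L21'}, after one crucial preliminary reduction that keeps all constants under control.

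I would first set $j:=k-1+2l$ and $N:=n+2+l$, so that $[1+(\delta_{[x,y]}f_\tau/y)^{2}]^{N}$ is precisely the denominator occurring in $A_{n+k+1+2l,\,n+1+l}(f_\tau,\dots,f_\tau)$. One may assume $j\ge0$: the only excluded case is $l=0,\ k=0$, and in \eqref{53} the corresponding summand carries the prefactor $k=0$ and hence vanishes. The constraints $0\le k\le n+1$ and $l\in\{0,1\}$ then give $0\le j\le n+2\le 2N$. The heart of the matter is the elementary pointwise bound
\[
\Big|\frac{t^{j}}{(1+t^{2})^{N}}\Big|\le1\qquad\text{for all }t\in\R\text{ and }0\le j\le 2N,
\]
which follows at once by distinguishing $|t|\le1$ (so $|t|^{j}\le1\le(1+t^{2})^{N}$) from $|t|\ge1$ (so $|t|^{j}\le|t|^{2N}\le(1+t^{2})^{N}$). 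Taking $t=\delta_{[x,y]}f_\tau/y$, this shows that the kernel
\[
K(x,y):=\frac{(\delta_{[x,y]}f_\tau/y)^{j}\,(\delta_{[x,y]}f/y)^{n+1}\,(\delta_{[x,y]}f_\tau'/y)}{[1+(\delta_{[x,y]}f_\tau/y)^{2}]^{N}}\,\frac{\delta_{[x,y]}c}{y}
\]
of $A_{n+k+1+2l,\,n+1+l}(f_\tau,\dots,f_\tau)[f_\tau,\dots,f_\tau,f,\dots,f,f_\tau',c]$, with $c\in H^{1}(\R)$ in the last slot, obeys $|K(x,y)|\le\|f'\|_{\infty}^{n+1}\,|\delta_{[x,y]}f_\tau'/y|\,|\delta_{[x,y]}c/y|$. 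The point is that the $j$ numerator copies of $\delta_{[x,y]}f_\tau/y$ are absorbed by the large power of the denominator rather than being estimated by $\|f_\tau'\|_{\infty}$: a black-box application of Lemma~\ref{L21'} would instead leave behind a factor $\|f_\tau'\|_{\infty}^{j}$, i.e.\ a power of $\|f_\tau\|_{H^{s}}$ growing with $n$, which would be incompatible with the asserted bound. I expect this observation to be the one thing that needs care; the rest is routine.

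From here I would mimic the proof of Lemma~\ref{L21'}: Minkowski's integral inequality gives $\|A_{n+k+1+2l,\,n+1+l}(f_\tau,\dots,f_\tau)[\dots,f_\tau',c]\|_{2}\le\int_{\R}\big(\int_{\R}|K(x,y)|^{2}\,dx\big)^{1/2}\,dy$. Since $s-1>1/2$, the embedding $H^{s-1}(\R)\hookrightarrow{\rm BC}^{s-3/2}(\R)$ yields $|\delta_{[x,y]}f_\tau'/y|\le C\|f_\tau'\|_{H^{s-1}}|y|^{s-5/2}$ uniformly in $x$, while $\int_{\R}|\delta_{[x,y]}c/y|^{2}\,dx=y^{-2}\int_{\R}|\mathcal{F}c(\xi)|^{2}|e^{iy\xi}-1|^{2}\,d\xi\le\|c\|_{H^{1}}^{2}\big({\bf 1}_{(-1,1)}(y)+4y^{-2}{\bf 1}_{[|y|\ge1]}(y)\big)$. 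Combining these,
\[
\Big(\int_{\R}|K(x,y)|^{2}\,dx\Big)^{1/2}\le C\|f'\|_{\infty}^{n+1}\|f_\tau'\|_{H^{s-1}}\|c\|_{H^{1}}\Big(|y|^{s-5/2}{\bf 1}_{(-1,1)}(y)+2|y|^{s-7/2}{\bf 1}_{[|y|\ge1]}(y)\Big),
\]
and the function of $y$ on the right is integrable because $s-5/2>-1$ and $s-7/2<-1$ for $s\in(3/2,2)$. Performing the $y$-integration and then using $\|f_\tau'\|_{H^{s-1}}\le\|f_\tau\|_{H^{s}}$ together with $\|f'\|_{\infty}\le C\|f\|_{H^{s}}$ (whose $(n+1)$-th power can be absorbed, for $n\ge1$, into a factor $C^{n}$) gives the claim, with all constants manifestly independent of $n$, $k$, $l$, $f$ and $f_\tau$.
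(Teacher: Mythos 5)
Your proof is correct and fills in the details that the paper elides with its one-line ``similar to Lemma~\ref{L21'}'': Minkowski's integral inequality followed by an $L^2_x$ kernel estimate. The one non-routine step, which you rightly flag, is that the $k-1+2l$ numerator copies of $\delta_{[x,y]}f_\tau/y$ must be absorbed into the denominator via the elementary bound $|t|^{j}/(1+t^{2})^{N}\le 1$ for $0\le j\le 2N$; estimating them by $\|f_\tau'\|_{\infty}$, as a black-box invocation of Lemma~\ref{L21'} would do, leaves an $n$-growing power of $\|f_\tau\|_{H^s}$, which is incompatible with the claimed independence of $C$ from $n$, $k$, and $f_\tau$. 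This mirrors what the proof of Lemma~\ref{L:IA3} does implicitly for the operator $S$, where $x_2^{k}$ is paired against $(1+x_2^{2})^{n+2}$ inside the Fourier coefficient estimate for $I_2$ so that only a bounded power of $a_2$ survives. Your disposal of the degenerate index $k=0,\ l=0$ --- noting that the corresponding summand in \eqref{53} carries the prefactor $k$ and vanishes --- is also correct, and the range check $j\le n+2\le 2(n+2+l)=2N$ for $n\ge1$ is what makes the pointwise bound applicable.
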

\begin{proof}
 The proof is similar to that of Lemma \ref{L21'}.
\end{proof}

We are now in a position to prove Theorem \ref{MT2} when $\sigma=0$, where we use a parameter trick  which  appears, in other forms, also in  \cite{An90, ES96, PSS15}. 
We present a new idea  which uses only  the abstract result Theorem \ref{T:A} in the context of an  evolution problem related to \eqref{P}, and not explicitly the maximal regularity property as in \cite{An90, ES96, PSS15}. 
The proof when $\sigma>0$ is  almost identical and is also discussed below, but it relies on some properties established in Section \ref{S6}.

\begin{proof}[Proof of Theorem \ref{MT2}] 
Assume first that $\sigma=0$.
We  then pick $f_0\in H^s(\R), $ $s\in (3/2,2),$ and we let $f=f(\cdot;f_0):[0,T_+(f_0))\to H^s(\R)$ denote the unique maximal solution to \eqref{P}, whose existence is guaranteed by Theorem \ref{MT1}. 
We further  choose $\lambda_1,\lambda_2\in(0,\infty)$  and   we define
\[
f_{\lambda_1,\lambda_2}(t,x):=f(\lambda_1 t,x+\lambda_2 t), \qquad   x\in\R, \, 0\leq t<T_+:=T_+(f_0)/\lambda_1.
\]
Classical arguments show that
\[f_{\lambda_1,\lambda_2}\in C([0,T_+),H^s(\R))\cap C((0,T_+), H^2(\R))\cap C^1((0,T_+), H^1(\R)).\]
We next  introduce the  function $u :=(u_1,u_2,u_3):[0,T_+)\to\R^2\times H^s(\R)$, where 
\[(u_1,u_2)(t):=(\lambda_1,\lambda_2),\qquad u_3(t):=f_{\lambda_1,\lambda_2}(t),\qquad 0\leq t<T_+,\] 
and we note that $u$ solves the quasilinear evolution problem
\begin{align}\label{QC}
\dot u= \Psi(u)[u],\quad t>0,\qquad u(0)=(\lambda_1,\lambda_2,f_0),
\end{align}
with $ \Psi:(0,\infty)^2\times H^s(\R)\to\kL(\R^2\times H^2(\R), \R^2\times H^1(\R))$ denoting the operator defined by
\begin{align}\label{QO}
  \Psi((v_1,v_2,v_3))[(u_1,u_2,u_3)]:=(0,0,v_1\Phi(v_3)[u_3]+v_2\p_x u_3).
\end{align}
 Proposition \ref{P14}  immediately yields 
\begin{align*}
 \Psi\in C^\omega((0,\infty)^2\times H^s(\R),\kL(\R^2\times H^2(\R), \R^2\times H^1(\R)))\qquad\text{for all $s\in(3/2,2)$.} 
\end{align*}

Given $v:=(v_1,v_2,v_3)\in (0,\infty)^2\times H^s(\R),$ the operator $\Psi(v)$ can be represented as a matrix
\[
\Psi(v)=\begin{pmatrix}
         0&0\\[1ex]
         0&v_1\Phi(v_3) +v_2\p_x
        \end{pmatrix}:\R^2\times H^2(\R)\to \R^2\times H^1(\R),
\]
and we infer from \cite[Corollary I.1.6.3]{Am95} that $-\Psi(v)\in\kH(\R^2\times H^2(\R), \R^2\times H^1(\R)) $ if and only if 
\begin{align}\label{GPq}
-(v_1\Phi(v_3)+v_2\p_x)\in\kH( H^2(\R), H^1(\R)).
\end{align}
We note that $v_2\p_x$ is a first order Fourier multiplier  and its symbol is purely imaginary.
Therefore, obvious modifications of the arguments presented in the proofs of Theorem \ref{T1} and   Proposition \ref{PP1} enable us to conclude that the property \eqref{GPq} is 
satisfied for each $(v_1,v_2,v_3)\in (0,\infty)^2\times H^s(\R)$ and   $s\in(3/2,2)$.  
Setting $\F_0:=\R^2\times H^1(\R)  $ and $\F_1:=\R^2\times H^2(\R),$ it holds that
\[
[\F_0,\F_1]_\theta=\R^2\times H^{1+\theta}(\R),\qquad\theta\in(0,1),
\]
and we may now apply  Theorem \ref{T:A} in the context of the quasilinear parabolic problem \eqref{QC} to conclude (similarly as in the proof of Theorem \ref{MT1}), 
for each $u_0=(\lambda_1,\lambda_2,f_0)\in (0,\infty)^2\times H^{s}(\R), $ $s\in(3/2,2)$, the existence of  a unique maximal solution  
\[ u:=u(\cdot; u_0)\in C([0,T_+(u_0)),(0,\infty)^2\times H^{s}(\R))\cap C((0,T_+(u_0)), \F_1)\cap C^1((0,T_+(u_0)), \F_0).\]
Additionally, the set
\[
\0:=\{(\lambda_1,\lambda_2,f_0,t)\,:\, t\in(0,T_+((\lambda_1,\lambda_2,f_0)))\}
\]
is open in $(0,\infty)^2\times H^s(\R)\times(0,\infty)$ and 
\[
[(\lambda_1,\lambda_2,f_0,t)\mapsto u(t;(\lambda_1,\lambda_2,f_0))]:\0\to \R^2\times H^s(\R)
\]
is a real-analytic map.

So, if we fix $f_0\in H^s(\R),$ then we may conclude that  
\[
\frac{T_+(f_0)}{\lambda_1}=T_+((\lambda_1,\lambda_2,f_0)) \qquad\text{for all $(\lambda_1,\lambda_2)\in(0,\infty)^2.$}
\]
As we want to prove that $f=f(\cdot;f_0)$ is real-analytic in $(0,T_+(f_0))\times\R$, it suffices to  establish the real-analyticity property in a small ball around $(t_0,x_0)$ for each $x_0\in\R$ and $t_0\in(0,T_+(f_0))$.
Let thus $(t_0,x_0)\in (0,T_+(f_0))\times\R$ be arbitrary.
For  $(\lambda_1,\lambda_2)\in\bB ((1,1),\e)\subset(0,\infty)^2$, with $\e$ chosen suitably small,  we have that
\[
t_0<T_+((\lambda_1,\lambda_2,f_0)) \qquad\text{for all $(\lambda_1,\lambda_2)\in\bB ((1,1),\e),$}
\]
and therewith
\[
\bB ((1,1),\e)\times\{f_0\}\times\{t_0\}\subset \0. 
\]
Moreover, since $u_3(\,\cdot\,; u_0)=f_{\lambda_1,\lambda_2}$, the restriction  
\begin{align}\label{IQ}
[(\lambda_1,\lambda_2)\mapsto f_{\lambda_1,\lambda_2}(t_0)]:\bB((1,1),\e)\to H^s(\R)
\end{align}
is  a real-analytic map.
Since $[h\mapsto h(x_0-t_0)]:H^s(\R)\to \R$ is a linear operator,   the composition
\begin{align}\label{11}
[(\lambda_1,\lambda_2)\mapsto f(\lambda_1t_0,x_0-t_0+\lambda_2t_0)]:\bB ((1,1),\e)\to \R
\end{align}
is real-analytic too.
  Furthermore, for $\delta>0$  small,   the mapping $\varphi:\bB ((t_0,x_0),\delta)\to\bB ((1,1),\e)$ with
  \begin{align}\label{12}
  \varphi(t,x):= \Big(\frac{t}{t_0},\frac{x-x_0+t_0}{t_0}\Big)
  \end{align}
is well-defined and real-analytic, and therefore the composition of the functions defined by \eqref{11} and \eqref{12}, that is the mapping 
$$  \big[(t,x)\mapsto  f(t,x)\big]:\bB ((t_0,x_0),\delta)\to\R,$$
is also real-analytic. This proves the first claim. 

Finally, the property   $f\in C^\omega ((0,T_+(f_0)), H^k(\R))$, for arbitrary $k\in\N$, is an immediate consequence of \eqref{IQ}. 

The arguments presented above carry over to the case when $\sigma>0$ (with the obvious modifications).
If $\sigma>0$, the operator $v_2\p_x$ appearing in \eqref{GPq} can be regarded as being a lower order perturbation and therefore the generator property of $\Psi(v)$
 follows in this case directly from the corresponding property of the original operator, cf. Theorem \ref{TK2}.
\end{proof}

%%%%%%%%%%%%%%%%%%%%%%%%%%%%%%%%%%%%%%%%%%%%%%%
%%%%%%%%%%%%%%%%%%%%%%%%%%%%%%%%%%%%%%%%%%%%%%
%%%%%%%%%%%%%%%%%%%%%%%%%%%%%%%%%%%%%%%%%%%%%%%
%%%%%%%%%%%%%%%%%%%%%%%%%%%%%%%%%%%%%%%%%%%%%%
%%%%%%%%%%%%%%%%%%%%%%%%%%%%%%%%%%%%%%%%%%%%%%%
%%%%%%%%%%%%%%%%%%%%%%%%%%%%%%%%%%%%%%%%%%%%%%
%%%%%%%%%%%%%%%%%%%%%%%%%%%%%%%%%%%%%%%%%%%%%%%
%%%%%%%%%%%%%%%%%%%%%%%%%%%%%%%%%%%%%%%%%%%%%%
\section{The Muskat problem with surface tension and gravity effects}\label{S6}
%%%%%%%%%%%%%%%%%%%%%%%%%%%%%%%%%%%%%%%%%%%%%%%
%%%%%%%%%%%%%%%%%%%%%%%%%%%%%%%%%%%%%%%%%%%%%%
%%%%%%%%%%%%%%%%%%%%%%%%%%%%%%%%%%%%%%%%%%%%%%%
%%%%%%%%%%%%%%%%%%%%%%%%%%%%%%%%%%%%%%%%%%%%%%
%%%%%%%%%%%%%%%%%%%%%%%%%%%%%%%%%%%%%%%%%%%%%%%
%%%%%%%%%%%%%%%%%%%%%%%%%%%%%%%%%%%%%%%%%%%%%%
%%%%%%%%%%%%%%%%%%%%%%%%%%%%%%%%%%%%%%%%%%%%%%%
%%%%%%%%%%%%%%%%%%%%%%%%%%%%%%%%%%%%%%%%%%%%%%

We now consider  surface tension forces acting  at the interface between the fluids, that is we take $\sigma>0$. 
The motion of the fluids may  also be influenced  by gravity, but we make no restrictions on  $\Delta_\rho$ which is now an arbitrary real number.
If we model flows in a vertical  Hele-Shaw cell  this means in particular that the lower fluid may be less dense than the fluid above.
Since $\Delta_\rho$ can be zero, \eqref{P} is also a model for fluid motions in a horizontal Hele-Shaw cell as for these flows the effects due to gravity are usually neglected, that is $g=0$.  
Again, we rescale the time appropriately and rewrite \eqref{P} as the following system 
\begin{equation}\label{Pest}
\left\{
\begin{array}{rlll}
 \p_tf(t,x)\!\!&=&\!\!\displaystyle f'(t,x)\PV\int_\R\frac{f(t,x)-f(t,x-y)}{ y^2+(f(t,x)-f(t,x-y))^2 }(\kappa(f))'(t,x-y)\, dy\\[2ex]
 &&+\displaystyle \PV\int_\R\frac{y}{ y^2+(f(t,x)-f(t,x-y))^2 }(\kappa(f))'(t,x-y)\, dy\\[2ex]
 &&+\displaystyle\Theta\PV\int_\R\frac{y(f'(t,x)-f'(t,x-y))}{ y^2+(f(t,x)-f(t,x-y))^2 }\, dy\qquad \text{for $ t>0$, $x\in\R$},\\[2ex]
  f(0,\cdot)\!\!&=&\!\!f_0,
\end{array}
\right.
\end{equation} 
with 
$$\Theta:=\frac{\Delta\rho}{\sigma}\in\R.$$
Since
\[
(\kappa(f))'=\frac{f'''}{(1+f'^2)^{3/2}}-3\frac{f'{f''}^2}{(1+f'^2)^{5/2}},
\]
we observe that the first equation of \eqref{Pest} is again  quasilinear, but now this property is due to the fact that  $(\kappa(f))'$ is an affine function in the variable $f'''$.

To be more precise we set
\begin{align}
\Phi_\sigma(f)[h](x):=&\displaystyle f'(x)\PV\int_\R\frac{\delta_{[x,y]}f}{ y^2+(\delta_{[x,y]}f)^2 }\Big(\frac{h'''}{(1+f'^2)^{3/2}}-3\frac{f'f''h''}{(1+f'^2)^{5/2}}\Big)(x-y)\, dy\nonumber\\[1ex]
 &+ \displaystyle\PV\int_\R\frac{y}{ y^2+(\delta_{[x,y]}f)^2 }\Big(\frac{h'''}{(1+f'^2)^{3/2}}-3\frac{f'f''h''}{(1+f'^2)^{5/2}}\Big)(x-y)\, dy\nonumber\\[1ex]
&+\Theta\PV\int_\R\frac{y(\delta_{[x,y]}h')}{ y^2+(\delta_{[x,y]}f)^2 }\, dy,\label{Def1}
\end{align}
and we recast the problem \eqref{Pest}  in the following compact form
\begin{equation}\label{Pest1}
  \dot f= \Phi_\sigma(f)[f],\quad t>0,\qquad f(0)=f_0.
\end{equation}
We emphasize that there are also other ways to write \eqref{Pest} as a quasilinear problem. For example the terms containing  only $f^{(l)}, $ $0\leq l\leq 2$, can be 
viewed as a nonlinear function $[f\mapsto F(f)]$ which would appear as an additive term to the right-hand side of \eqref{Pest1}, with $\Phi_\sigma(f)[h]$ modified accordingly.
However, the formulation \eqref{Def1}-\eqref{Pest1} appears to us as being optimal as it allows us to consider the largest set of initial data among  all  formulations. 
To be more precise,   the operator introduced by \eqref{Def1}  satisfies, with the notation in Lemma \ref{L21} and Remark \ref{R2}, 
the following relation
\begin{align*} 
  \Phi_\sigma(f)[h]=&f'B_{1,1}(f,f)\Big[\frac{h'''}{(1+f'^2)^{3/2}}-3\frac{f'f''h''}{(1+f'^2)^{5/2}}\Big]+B_{0,1}(f)\Big[\frac{h'''}{(1+f'^2)^{3/2}}-3\frac{f'f''h''}{(1+f'^2)^{5/2}}\Big]\nonumber\\[1ex]
  &+\Theta A_{0,0}(f)[h'],  
\end{align*}
and we  now claim, based on the results in Section \ref{S4}, that 
\begin{align}\label{reg}
  \Phi_\sigma\in C^{\omega}  (H^2(\R),\kL(H^3(\R), L_2(\R))) .
\end{align}
Indeed, arguing as  in Section \ref{S4}, it follows that
\begin{equation}\label{reg12}
\begin{aligned}
&\left[f\mapsto \Big[h\mapsto\PV\int_\R\frac{\delta_{[\cdot,y]}f}{ y^2+(\delta_{[\cdot,y]}f)^2 }h(\cdot-y)\, dy\Big]\right]\in C^\omega(H^2(\R),\kL(L_2(\R))), \\[1ex]
& \left[f\mapsto \Big[h\mapsto \PV\int_\R\frac{y}{ y^2+(\delta_{[\cdot,y]}f)^2 }h(\cdot-y)\, dy\Big]\right]\in C^\omega(H^2(\R),\kL(L_2(\R))),\\[1ex]
& \left[f\mapsto \Big[h\mapsto\PV\int_\R\frac{y(\delta_{[\cdot,y]}h')}{ y^2+(\delta_{[\cdot,y]}f)^2 }\, dy\Big]\right]\in C^\omega(H^2(\R),\kL(H^3(\R),L_2(\R))).
\end{aligned}
\end{equation}
Moreover,  classical arguments, see e.g. \cite[Theorem 5.5.3/4]{RS96}, yield  that
\begin{align}\label{reg22}
&\left[f\mapsto \Big[h\mapsto\frac{h'''}{(1+f'^2)^{3/2}}-3\frac{f'f''h''}{(1+f'^2)^{5/2}}\Big]\right]\in C^\omega(H^2(\R),\kL(H^3(\R),L_2(\R))).
\end{align}
The  relations \eqref{reg12}-\eqref{reg22}  immediately imply \eqref{reg}.

In the following, we prove that  $\Phi_\sigma(f)$ is, for each $f\in H^2(\R)$,    the generator of a strongly continuous and analytic semigroup in $\kL( L_2(\R))$, that is
\begin{align*}
  -\Phi_\sigma(f)\in \kH  (H^3(\R), L_2(\R)).
\end{align*}
To this end we write
\begin{align} 
  \Phi_\sigma =&\Phi_{\sigma,1}+\Phi_{\sigma,2}, \label{MP2}
\end{align}
where
\begin{align}
 \Phi_{\sigma,1}(f)[h]:=&f'B_{1,1}(f,f)\Big[\frac{h'''}{(1+f'^2)^{3/2}}\Big]+B_{0,1}(f)\Big[\frac{h'''}{(1+f'^2)^{3/2}} \Big],\label{FT1}\\[1ex]
   \Phi_{\sigma,2}(f)[h]:=&-3f'B_{1,1}(f,f)\Big[\frac{f'f''h''}{(1+f'^2)^{5/2}}\Big]-3B_{0,1}(f)\Big[\frac{f'f''h''}{(1+f'^2)^{5/2}}\Big]+\Theta A_{0,0}(f)[h']
\end{align}
for $f\in H^2(\R)$,  $h\in H^3(\R)$. 
Since $\Phi_{\sigma,2}(f)\in\kL(H^{8/3}(\R), L_2(\R)) $ and $[L_2(\R), H^3(\R)]_{8/9}=H^{8/3}(\R),$ we can view $\Phi_{\sigma,2}(f)$ as being a lower order perturbation, cf. 
\cite[Proposition 2.4.1]{L95}, and we only need to establish the generator property for the leading order term  $\Phi_{\sigma,1}(f).$ 
Similarly as in Section \ref{S3}, we consider    a continuous mapping
\[
 [\tau\mapsto \Phi_{\sigma,1}(\tau f)]:[0,1]\to\kL(H^3(\R), L_2(\R)),
\]
which  transforms the operator $\Phi_{\sigma,1}(f)$ into the operator
\[
\Phi_{\sigma,1}(0)=B_{0,1}(0)\circ\p_x^3=-\pi (\p_x^4)^{3/4},
\]
where $(\p_x^4)^{3/4}$ is the Fourier multiplier with symbol $[\xi\mapsto |\xi|^3].$
We now establish the following result.

\begin{thm}\label{TK1} 
Let  $f\in H^2(\R)$   and     $\mu>0$ be given.

Then, there exist $\e\in(0,1)$, a finite $\e$-localization family  $\{\pi_j^\e\,:\, -N+1\leq j\leq N\} $ satisfying \eqref{i}-\eqref{v}, a constant $K=K(\e)$, and for each  $ j\in\{-N+1,\ldots,N\}$ and $\tau\in[0,1]$ there 
exist   operators $$\bA_{ j,\tau}\in\kL(H^3(\R), L_2(\R))$$
 such that 
 \begin{equation}\label{DEK1}
  \|\pi_j^\e \Phi_{\sigma,1}(\tau f)[h]-\bA_{j,\tau}[\pi^\e_j h]\|_{2}\leq \mu \|\pi_j^\e h\|_{H^3}+K\|  h\|_{H^2}
 \end{equation}
 for all $ j\in\{-N+1,\ldots,N\}$, $\tau\in[0,1],$ and  $h\in H^3(\R)$. The operators $\bA_{j,\tau}$ are defined  by 
  \begin{align} 
 \bA_{j,\tau }:=&- \frac{ \pi}{(1+\tau^2 f'^2(x_j^\e) )^{3/2}}   (\p_x^4 )^{3/4}, \qquad |j|\leq N-1,\label{FMK1}
 \end{align}
 where $x_j^\e$ is a point belonging to  $\supp  \pi_j^\e$, respectively
 \begin{align} 
 \bA_{N,\tau }:=& - \pi (\p_x^4)^{3/4}.\label{FMK2}
 \end{align}
\end{thm}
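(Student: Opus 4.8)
The plan is to adapt the scheme of the proof of Theorem~\ref{T1}. Writing $\psi_\tau:=(1+\tau^2f'^2)^{-3/2}$ we have, by \eqref{FT1},
\[
\Phi_{\sigma,1}(\tau f)[h]=\tau f'\,B_{1,1}(\tau f,\tau f)[\psi_\tau h''']+B_{0,1}(\tau f)[\psi_\tau h'''],
\]
and, since $-\pi(\p_x^4)^{3/4}=B_{0,1}(0)\circ\p_x^3$, also $\bA_{j,\tau}=\beta_{j,\tau}\,B_{0,1}(0)\circ\p_x^3$ with $\beta_{j,\tau}=(1+\tau^2f'^2(x_j^\e))^{-3/2}$ for $|j|\le N-1$ and $\beta_{N,\tau}=1$. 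I would fix a finite $\e$-localization family $\{\pi_j^\e:-N+1\le j\le N\}$ satisfying \eqref{i}--\eqref{v} together with an associated family $\{\chi_j^\e\}$ satisfying \eqref{c1}--\eqref{c3}, keeping $\e\in(0,1)$ free until the end. The guiding heuristic is that, near $x_j^\e$, the operators $B_{0,1}(\tau f)$ and $B_{1,1}(\tau f,\tau f)$ act like the Fourier multipliers $(1+\tau^2f'^2(x_j^\e))^{-1}\pi H$ and $\tau f'(x_j^\e)(1+\tau^2f'^2(x_j^\e))^{-1}\pi H$ applied to their argument, and that the argument $\psi_\tau h'''$ is there close to $(1+\tau^2f'^2(x_j^\e))^{-3/2}h'''$; the two contributions then add up, via $1=(1+\tau^2f'^2(x_j^\e))^{-1}+\tau^2f'^2(x_j^\e)(1+\tau^2f'^2(x_j^\e))^{-1}$, to $\beta_{j,\tau}B_{0,1}(0)[h''']=\bA_{j,\tau}[h]$. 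This collapse of the power $5/2$ back to $3/2$ is the surface-tension counterpart of the gravity identity used in the proof of Proposition~\ref{PE}.

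To turn the heuristic into estimates I would, for $|j|\le N-1$, split $\pi_j^\e\Phi_{\sigma,1}(\tau f)[h]-\bA_{j,\tau}[\pi_j^\e h]$ into commutator terms and frozen main terms. The commutator terms---those in which a derivative falls on $\pi_j^\e$ or $\chi_j^\e$, in particular $[\pi_j^\e,B_{0,1}(\tau f)]$- and $[\pi_j^\e,B_{1,1}(\tau f,\tau f)]$-type contributions and the terms generated when $(\pi_j^\e h)'''$ is traded for $\pi_j^\e h'''$---are treated exactly as $T_{12}$ and $T_{112}$ in the proof of Theorem~\ref{T1}: one integrates by parts once in $y$ so as to lower the order of the $h$-derivative, and then recognises the result as a finite sum of operators $B_{n,m}$ applied to $L_2$-functions built from $\psi_\tau$ (or from $\psi_\tau'\in L_2(\R)$), from $h''$, and from bounded factors; each such operator is bounded on $L_2$ by Remark~\ref{R2}, ultimately by the estimate for the operator in \eqref{FFF}, with norm controlled by $\|(\pi_j^\e)'\|_\infty$, $\|(\chi_j^\e)'\|_\infty$ and $\|f'\|_\infty$, so these terms are bounded by $K\|h\|_{H^2}$. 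In the frozen main terms one replaces $\delta_{[x,y]}(\tau f)/y$ and $\tau f'(x)$ by $\tau f'(x_j^\e)$ and $\psi_\tau(x-y)$ by $\psi_\tau(x_j^\e)$: the first replacement is done, as in Theorem~\ref{T1}, through the auxiliary function $F_j$ of \eqref{Func} and a $B_{2,1}$-expansion of the difference, and the $\psi_\tau$-replacement through an auxiliary function constructed from $\psi_\tau$ exactly as $F_j$ is constructed from $f$ (it matches $\psi_\tau$ on $\supp\chi_j^\e$ and is constant elsewhere). Since $f\in H^2(\R)$---so that $f'$ is bounded, $1/2$-H\"older continuous and vanishes at infinity---and $\psi_\tau-1\in H^1(\R)$, the quantities $\|f'-f'(x_j^\e)\|_{L_\infty(\supp\chi_j^\e)}$, $\|\psi_\tau-\psi_\tau(x_j^\e)\|_{L_\infty(\supp\chi_j^\e)}$ and $\|\psi_\tau'\|_{L_2(\supp\chi_j^\e)}$ all tend to $0$ as $\e\to0$ (the last one by absolute continuity of the integral $E\mapsto\int_E|\psi_\tau'|^2$, since $|\supp\chi_j^\e|\le3\e$); combining this with the interpolation inequality $\|h''\|_\infty\le C\|h\|_{H^2}^{1/2}\|h\|_{H^3}^{1/2}$, with $\|\pi_j^\e h'''\|_2\le\|\pi_j^\e h\|_{H^3}+K\|h\|_{H^2}$, and with the norm equivalence of Lemma~\ref{L:EN2}, the frozen-main-term errors are seen to be of the form $\mu\|\pi_j^\e h\|_{H^3}+K\|h\|_{H^2}$ once $\e$ is small enough. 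Adding the finitely many pieces gives \eqref{DEK1} for $|j|\le N-1$. The case $j=N$ is simpler: one uses \eqref{ii} to replace $f'(x_N^\e)$ by $0$ and $\bA_{N,\tau}$ by $B_{0,1}(0)\circ\p_x^3$, the roles of $F_j$ and of the $\psi_\tau$-auxiliary function are played by the far-field function $F_N$ of the proof of Theorem~\ref{T1} (with $\|F_N\|_\infty+\|F_N'\|_\infty\to0$) and by the fact that $f$, $f'$ and hence $\psi_\tau-1$ are small on $\supp\chi_N^\e$, and here one exploits heavily, as in Theorem~\ref{T1}, that $f$ and $f'$ vanish at infinity.

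The step I expect to be the main obstacle is the commutator bookkeeping in the presence of the coefficient $\psi_\tau$. Because $f$ lies only in $H^2(\R)$, the second derivative $\psi_\tau''$ already involves $f'''\notin L_2(\R)$, so throughout the integrations by parts one must make sure that no more than one derivative is ever shifted onto $\psi_\tau$, organising the computation so that every surviving term is an operator $B_{n,m}$ applied to a product of $\psi_\tau$, or of $\psi_\tau'\in L_2(\R)$, with $h''$ and bounded factors. This is precisely where the choice in \eqref{Def1} to keep $\psi_\tau$ inside the singular integrals pays off---distributing the derivatives differently would force a smaller range of admissible initial data---and it is also what makes an $F_j$-type auxiliary function for $\psi_\tau$ preferable to a naive Taylor expansion. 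A secondary and more routine point is the verification of the symbol identity sketched in the first paragraph, with the principal values at zero handled as in Theorem~\ref{T1}.
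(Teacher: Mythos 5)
Your proposal is correct and follows essentially the same route as the paper: localize, freeze the kernel via the auxiliary function $F_j$ (and $F_N$ at infinity), freeze the coefficient $\psi_\tau=(1+\tau^2 f'^2)^{-3/2}$, control the commutator terms by Remark~\ref{R2} and the estimate \eqref{FFF}, and absorb the intermediate-order contributions by interpolation together with Young's inequality; your observation that the two pieces of $\Phi_{\sigma,1}$ recombine because $1=(1+\tau^2f'^2(x_j^\e))^{-1}+\tau^2f'^2(x_j^\e)(1+\tau^2f'^2(x_j^\e))^{-1}$ is exactly the mechanism the paper exploits. The only cosmetic differences are that the paper treats both terms at once through the unified operator $K_a(\tau f)$ with $f_{a,\tau}:=(1-a)\tau f+a\,{\rm id}_\R$, $a\in\{0,1\}$, which automatizes this recombination, and that the paper freezes $\psi_\tau$ not through a second $F_j$-type auxiliary function but by factoring $\psi_\tau-\psi_\tau(x_j^\e)=\tau^2 Q\,(f'(x_j^\e)-f')$ inside the argument of $B_{1,1}$, where the $\pi_j^\e$ already localizes; since your auxiliary function matches $\psi_\tau$ on $\supp\chi_j^\e$, the two devices yield the same estimate. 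Your flagged concern---that at most one derivative may ever land on $\psi_\tau$ because $f\in H^2(\R)$ only---is indeed the bookkeeping constraint governing the integrations by parts in the paper's expansion of $T_1[h]$ and $S_1[h]$.
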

\begin{proof} 
 Let   $\{\pi_j^\e\,:\, -N+1\leq j\leq N\} $ be a $\e$-localization family satisfying  the properties \eqref{i}-\eqref{v} and $\{\chi_j^\e\,:\, -N+1\leq j\leq N\} $ be an associated family satisfying \eqref{c1}-\eqref{c3}, with $\e\in(0,1)$
which will be fixed below.

To deal with both terms of $\Phi_{\sigma,1}(\tau f)$, cf. \eqref{FT1}, at once, we consider the  operator
\[
K_{a}(\tau f)[h]:=f_{a,\tau}'B_{1,1}(f_{a,\tau},\tau f)\Big[\frac{h'''}{(1+\tau^2 f'^2)^{3/2}}\Big],
\]
where, for $a\in\{0,1\}$, we set
\[
f_{a,\tau}:=(1-a)\tau f+a{\, \rm id}_\R.
\] 
For $a=0$ we recover the first term in the definition of $\Phi_{\sigma,1}(\tau f)[h]$,
while for $a=1$  the expression matches the second one.

In the following $h\in H^3(\R)$ is  arbitrary.
Again, constants which are
independent of $\e$ (and, of course, of $h\in H^3(\R)$, $\tau\in [0,1]$, $a\in\{0,1\}$, and $j \in \{-N+1, \ldots, N\}$) are denoted by $C$, while the constants that we denote by $K$ may depend only upon $\e.$  
We further let 
\[
\bA^a_{j,\tau} :=-\pi \frac{  f_{a,\tau}'^2(x_j^\e)}{(1+\tau^2 f'^2(x_j^\e) )^{5/2}}   (\p_x^4 )^{3/4} \qquad\text{for $|j|\leq N-1$},
\]
respectively
\[
\bA^a_{N,\tau} :=-\pi  a^2   (\p_x^4 )^{3/4}.
\]
We analyze the cases $j=N$ and $|j|\leq N-1$ separately.\medskip

\noindent{\em The case $|j|\leq N-1$.\,\,} For $|j|\leq N-1$     we  write
\begin{align}\label{mm1}
 \pi_j^\e K_{a}(\tau f)[h]-\bA^a_{j,\tau}[\pi_j^\e h]:=T_1[h]+T_2[h]+T_3[h],
\end{align}
where 
\begin{align*}
T_1[h]:=& \pi_j^\e K_{a}(\tau f)[h]- f_{a,\tau}'(x_j^\e)B_{1,1}(f_{a,\tau},\tau f)\Big[\frac{\pi_j^\e h'''}{(1+\tau^2 f'^2)^{3/2}}\Big], \\[1ex]
T_2[h]:=&f_{a,\tau}'(x_j^\e)B_{1,1}(f_{a,\tau},\tau f)\Big[\frac{\pi_j^\e h'''}{(1+\tau^2 f'^2)^{3/2}}\Big]-\frac{f_{a,\tau}'(x_j^\e)}{ (1+\tau^2 f'^2(x_j^\e))^{3/2}}B_{1,1}(f_{a,\tau},\tau f)[ \pi_j^\e h''' ],\\[1ex]
T_3[h]:=&\frac{f_{a,\tau}'(x_j^\e)}{ (1+\tau^2 f'^2(x_j^\e))^{3/2}}B_{1,1}(f_{a,\tau},\tau f)[ \pi_j^\e h''' ] - \bA^a_{j,\tau}[\pi_j^\e h].
\end{align*}

We consider first the term $T_1[h].$   The identity $\chi_j^\e\pi_j^\e=1$ on $\supp \pi_j^\e$   and integration by parts lead us to the following relation
\begin{align*} 
T_1[h] =&\chi_j^\e (f_{a,\tau}' -f_{a,\tau}'(x_j^\e))B_{1,1}(f_{a,\tau}, \tau f)\Big[\frac{\pi_j^\e h'''}{(1+\tau^2 f'^2)^{3/2}}\Big]  \\[1ex]
&+(1-\chi_j^\e )(f_{a,\tau}'(x_j^\e)-f_{a,\tau}' ) B_{1,1}(f_{a,\tau},\tau f)\Big[\frac{   {\pi_j^\e}'h''}{(1+\tau^2f'^2)^{3/2}}  -3\tau^2 \frac{\pi_j^\e f'f''h''}{(1+\tau^2f'^2)^{5/2}}\Big] \\[1ex]
&+(f_{a,\tau}'(x_j^\e)-f_{a,\tau}' )\Big\{ B_{1,1}(\chi_j^\e,\tau f)\Big[\frac{\pi_j^\e f_{a,\tau}' h''}{(1+\tau^2 f'^2)^{3/2}}\Big]-2B_{2,1}(f_{a,\tau},\chi_j^\e,\tau f)\Big[\frac{\pi_j^\e h''}{(1+\tau^2 f'^2)^{3/2}}\Big]\Big\}\\[1ex]
&-2\tau^2(f_{a,\tau}'(x_j^\e)-f_{a,\tau}' )B_{3,2}(f_{a,\tau},\chi_j^\e, f,\tau f,\tau f)\Big[\frac{\pi_j^\e f'h''}{(1+\tau^2 f'^2)^{3/2}}\Big]\\[1ex]
&+2\tau^2(f_{a,\tau}'(x_j^\e)-f_{a,\tau}' )B_{4,2}(f_{a,\tau},\chi_j^\e,f, f,\tau f,\tau f)\Big[\frac{\pi_j^\e h''}{(1+\tau^2 f'^2)^{3/2}}\Big] \\[1ex]
&+3\tau^2f_{a,\tau}'  \Big\{\pi_j^\e B_{1,1}(f_{a,\tau},\tau f)\Big[\frac{    f'f''h'' }{(1+\tau^2f'^2)^{5/2}}\Big]  
-B_{1,1}(f_{a,\tau},\tau f)\Big[\frac{ \pi_j^\e   f'f''h'' }{(1+\tau^2f'^2)^{5/2}}\Big] \Big\}\\[1ex]
&+  f_{a,\tau}'  \Big\{B_{1,1}(\pi_j^\e,\tau f)\Big[\frac{ f_{a,\tau}' h''}{(1+\tau^2 f'^2)^{3/2}}\Big] +B_{1,1}(f_{a,\tau},\tau f)\Big[\frac{ {\pi_j^\e}' h''}{(1+\tau^2 f'^2)^{3/2}}\Big] \Big\}\end{align*}
\begin{align*}
&-  2f_{a,\tau}'   B_{2,1}(\pi_j^\e,f_{a,\tau},\tau f)\Big[\frac{ h''}{(1+\tau^2 f'^2)^{3/2}}\Big]  -  2\tau^2 f_{a,\tau}'   B_{3,2}(\pi_j^\e,f_{a,\tau},f,\tau f,\tau f)\Big[\frac{ f'h''}{(1+\tau^2 f'^2)^{3/2}}\Big]  \\[1ex]
&+ 2\tau^2 f_{a,\tau}'   B_{4,2}(\pi_j^\e,f_{a,\tau},f,f,\tau f,\tau f)\Big[\frac{ h''}{(1+\tau^2 f'^2)^{3/2}}\Big] . 
\end{align*}
Using Remark \ref{R2}, the interpolation property \eqref{IP}, Young's inequality, and the H\"older  continuity of $f_{a,\tau}'$, it follows that
\begin{align}
 \|T_1[h]\|_2\leq&C \big[\|\chi_j^\e(f_{a,\tau}'-f_{a,\tau}'(x_j^\e))\|_\infty\|\pi_j^\e h'''\|_2+\|\pi_j^\e h''\|_\infty\big]+K\|h\|_{H^2}\nonumber\\[1ex]
 \leq &\frac{\mu}{3}\|\pi_j^\e h\|_{H^3}+K\|h\|_{H^2}\label{mm2}
\end{align}
provided that $\e$ is sufficiently small.

Furthermore, we  have
\begin{align*}
 T_2[h]=\frac{\tau^2f_{a,\tau}'(x_j^\e)}{ (1+\tau^2 f'^2(x_j^\e))^{3/2}}B_{1,1}(f_{a,\tau},\tau f)\Big[ Q(f'(x_j^\e)-f') \pi_j^\e h''' \Big],
\end{align*}
where
\[
Q:=\frac{(f'(x_j^\e)+f')\big[(1+\tau^2f'^2)^{2}+(1+\tau^2f'^2)(1+\tau^2 f'^2(x_j^\e))+(1+\tau^2 f'^2(x_j^\e))^2\big]}{(1+\tau^2f'^2)^{3/2}\big[(1+\tau^2f'^2)^{3/2}+(1+\tau^2f'^2(x_j^\e))^{3/2}\big]},
\]
and therewith
\begin{align}
 \|T_2[h]\|_2\leq C   \|\chi_j^\e(f_{a,\tau}'-f_{a,\tau}'(x_j^\e))\|_\infty\|\pi_j^\e h'''\|_2 \leq \frac{\mu}{3}\|\pi_j^\e h\|_{H^3}+K\|h\|_{H^2}\label{mm3}
\end{align}
if $\e$ is sufficiently small.

Finally, arguing as in {\em Step 3} of othe proof of Theorem \ref{T2}, we deduce that   for $\e$ sufficiently small we have
\begin{align}\label{mm4}
 \|T_3[h]\|_2\leq \frac{\mu}{3}\|\pi_j^\e h\|_{H^3}+K\|h\|_{H^2}.
\end{align}
Gathering \eqref{mm1}-\eqref{mm4}, we have established the desired estimate \eqref{DEK1}  for  $|j|\leq N-1.$\medskip

\noindent{\em The case $j=N$.\,\,}  For $j=N$  we write
\begin{align}\label{mmm1}
 \pi_N^\e K_{a}(\tau f)[h]-\bA^a_{N,\tau}[\pi_N^\e h]=:S_1[h]+S_2[h]+S_3[h]+S_4[h],
\end{align}
where 
\begin{align*}
S_1[h]:=& \pi_N^\e K_{a}(\tau f)[h]- a B_{1,1}(f_{a,\tau},\tau f)\Big[\frac{\pi_N^\e h'''}{(1+\tau^2 f'^2)^{3/2}}\Big], \\[1ex]
S_2[h]:=&aB_{1,1}(f_{a,\tau},\tau f)\Big[\frac{\pi_N^\e h'''}{(1+\tau^2 f'^2)^{3/2}}\Big]-a B_{1,1}(f_{a,\tau},\tau f)[ \pi_N^\e h''' ],\\[1ex]
S_3[h]:=&a B_{1,1}(f_{a,\tau},\tau f)[ \pi_N^\e h''' ]-a^2B_{0,1}(0)[ \pi_N^\e h''' ], \\[1ex]
S_4[h]:=&a^2B_{0,1}(0)[ \pi_N^\e h''' ]- \bA^a_{N,\tau}[\pi_N^\e h].
\end{align*}
Similarly as for $T_1[h],$ we derive the following identity 
\begin{align*} 
S_1[h] =&\tau(1-a)\chi_N^\e f' B_{1,1}(f_{a,\tau}, \tau f)\Big[\frac{\pi_N^\e h'''}{(1+\tau^2 f'^2)^{3/2}}\Big] \\[1ex]
&-\tau(1-a)f'(1-\chi_N^\e ) B_{1,1}(f_{a,\tau},\tau f)\Big[\frac{   {\pi_N^\e}'h''}{(1+\tau^2f'^2)^{3/2}}-3\tau^2 \frac{\pi_N^\e f'f''h''}{(1+\tau^2f'^2)^{5/2}}\Big] \\[1ex]
&-\tau(1-a)f' \Big\{B_{1,1}(\chi_N^\e,\tau f)\Big[\frac{\pi_N^\e f_{a,\tau}' h''}{(1+\tau^2 f'^2)^{3/2}}\Big] -2B_{2,1}(f_{a,\tau},\chi_N^\e,\tau f)\Big[\frac{\pi_N^\e h''}{(1+\tau^2 f'^2)^{3/2}}\Big]\Big\} \\[1ex]
&+2\tau^3(1-a)f'B_{3,2}(f_{a,\tau},\chi_N^\e, f,\tau f,\tau f)\Big[\frac{\pi_N^\e f'h''}{(1+\tau^2 f'^2)^{3/2}}\Big] \\[1ex]
&-2\tau^3(1-a)f'B_{4,2}(f_{a,\tau},\chi_N^\e,f, f,\tau f,\tau f)\Big[\frac{\pi_N^\e h''}{(1+\tau^2 f'^2)^{3/2}}\Big]\\[1ex]
&+3\tau^2f_{a,\tau}'  \Big\{\pi_N^\e B_{1,1}(f_{a,\tau},\tau f)\Big[\frac{    f'f''h'' }{(1+\tau^2f'^2)^{5/2}}\Big]  
-B_{1,1}(f_{a,\tau},\tau f)\Big[\frac{ \pi_N^\e   f'f''h'' }{(1+\tau^2f'^2)^{5/2}}\Big] \Big\}\\[1ex]
&+  f_{a,\tau}'  \Big\{B_{1,1}(\pi_N^\e,\tau f)\Big[\frac{ f_{a,\tau}' h''}{(1+\tau^2 f'^2)^{3/2}}\Big] +B_{1,1}(f_{a,\tau},\tau f)\Big[\frac{ {\pi_N^\e}' h''}{(1+\tau^2 f'^2)^{3/2}}\Big] \Big\}\\[1ex]
&-  2f_{a,\tau}'   B_{2,1}(\pi_N^\e,f_{a,\tau},\tau f)\Big[\frac{ h''}{(1+\tau^2 f'^2)^{3/2}}\Big] -  2\tau^2 f_{a,\tau}'   B_{3,2}(\pi_N^\e,f_{a,\tau},f,\tau f,\tau f)\Big[\frac{ f'h''}{(1+\tau^2 f'^2)^{3/2}}\Big]  \\[1ex]
&+ 2\tau^2 f_{a,\tau}'   B_{4,2}(\pi_N^\e,f_{a,\tau},f,f,\tau f,\tau f)\Big[\frac{ h''}{(1+\tau^2 f'^2)^{3/2}}\Big].  
\end{align*}
Recalling that $f'$ vanishes at infinity, we obtain  in virtue of Remark \ref{R2}, the interpolation property \eqref{IP}, and Young's inequality  that
\begin{align}
 \|S_1[h]\|_2\leq \frac{\mu}{3}\|\pi_j^\e h\|_{H^3}+K\|h\|_{H^2}\label{mmm2}
\end{align}
provided that $\e$ is sufficiently small.
Furthermore,   Remark \ref{R2} implies that  for $\e$ sufficiently small 
\begin{align}
 \|S_2[h]\|_2=&a\Big\|B_{11}(f_{a,\tau},\tau f)\Big[\frac{\pi_N^\e h'''}{(1+\tau^2 f'^2)^{3/2}}\big[1-(1+\tau^2 f'^2)^{3/2}\big]\Big]\Big\|_2\nonumber\\[1ex]
 \leq& C\|\pi_N^\e h'''\|_2\|\chi_N^\e\big[1-(1+\tau^2 f'^2)^{3/2}\big]\|_\infty\leq \frac{\mu}{3}\|\pi_j^\e h\|_{H^3}+K\|h\|_{H^2}\label{mmm3}.
\end{align}
Since $a(1-a)=0,$ we compute that
\[
S_3[h]=-a^2B_{2,1}(f,f,\tau f)[\pi_\e^N h'''],
\]
and the  arguments presented in {\em Step 4} of the proof of Theorem \ref{T1}  yield 
\begin{align}
 \|S_3[h]\|_2 \leq \frac{\mu}{3}\|\pi_j^\e h\|_{H^3}+K\|h\|_{H^2}\label{mmm4}
\end{align}
for   $\e$ sufficiently small.
 Finally,
\begin{align}
 \|S_4[h]\|_2=a^2\|B_{0,1}(0)[3(\pi_N^\e)' h''+3(\pi_N^\e)''h'+(\pi_N^\e )'''h] \|_2\leq K\|h\|_{H^2}\label{mmm5},
\end{align}
and combining \eqref{mmm1}-\eqref{mmm5} we obtain the estimate \eqref{DEK1} for $j=N$. This completes the proof.
\end{proof}

The Fourier multipliers defined by 
\eqref{FMK1}-\eqref{FMK2} are   generators of strongly continuous analytic semigroups in $\kL(L_2(\R))$ and they satisfy resolvent estimates which 
are uniform with respect to $x_j^\e\in\R $ and  $\tau\in [0, 1].$
More precisely, we have the following result.

\begin{prop}\label{PPK1} 
Let  $f\in H^2(\R)$  be fixed. Given $x_0\in \R$ and $\tau\in[0,1]$, let
\[
\bA_{x_0,\tau}:=-\frac{\pi}{(1+\tau^2 f'^2(x_0))^{3/2}}(\p_x^4)^{3/4}.
\]
  Then, there exists a constant  $\kappa_0\geq1$  such that  
  \begin{align}\label{13K}
&\lambda-\bA_{x_0,\tau}\in{\rm Isom}(H^3(\R), L_2(\R)),\\[1ex]
\label{14K}
& \kappa_0\|(\lambda-\bA_{x_0,\tau})[h]\|_{2}\geq  |\lambda|\cdot\|h\|_{2}+\|h\|_{H^3}
\end{align}
for all $x_0\in\R$, $\tau\in[0,1],$    $\lambda\in\C$ with $\re \lambda\geq 1$, and $h\in H^3(\R)$.
\end{prop}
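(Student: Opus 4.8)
The plan is to follow the proof of Proposition \ref{PP1}; in the present case the argument is in fact shorter, because the symbol of $\bA_{x_0,\tau}$ is real and has a sign. First I would set $b_\tau:=\pi(1+\tau^2 f'^2(x_0))^{-3/2}$, so that $\bA_{x_0,\tau}$ is the Fourier multiplier with symbol $m_\tau(\xi):=-b_\tau|\xi|^3$, $\xi\in\R$. Since $H^2(\R)\hookrightarrow{\rm BC}^1(\R)$, the derivative $f'$ is bounded, and I would record the two-sided bound $\pi(1+\|f'\|_\infty^2)^{-3/2}\leq b_\tau\leq\pi$, which holds uniformly in $x_0\in\R$ and $\tau\in[0,1]$; this is the only point at which the structure of $f$ enters, and it is what makes all subsequent constants uniform. (The same bounds, now with $b_\tau$ replaced by the constant $\pi$, also cover the operator $\bA_{N,\tau}=-\pi(\p_x^4)^{3/4}$ appearing in Theorem \ref{TK1}.)

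Next, fixing $\lambda\in\C$ with $\re\lambda\geq1$, I would note that $\re(\lambda-m_\tau(\xi))=\re\lambda+b_\tau|\xi|^3\geq1+b_\tau|\xi|^3>0$, so $\lambda-m_\tau$ has no zeros, and define the candidate resolvent $R(\lambda,\bA_{x_0,\tau})$ as the Fourier multiplier with symbol $(\lambda-m_\tau)^{-1}$. From the lower bound $|\lambda-m_\tau(\xi)|\geq1+b_\tau|\xi|^3\geq c(1+|\xi|^2)^{3/2}$, with $c>0$ depending only on $\min\{1,b_\tau\}$, one sees that this multiplier maps $L_2(\R)$ boundedly into $H^3(\R)$, and it is immediate that it is a two-sided inverse of $\lambda-\bA_{x_0,\tau}$; this establishes \eqref{13K}.

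For the resolvent estimate \eqref{14K} I would argue on the Fourier side. The Plancherel theorem together with $|\lambda-m_\tau(\xi)|^2\geq(1+b_\tau|\xi|^3)^2\geq C^{-1}(1+|\xi|^2)^3$ (the constant again depending only on $\min\{1,b_\tau^2\}$) gives $\|(\lambda-\bA_{x_0,\tau})[h]\|_2\geq C^{-1/2}\|h\|_{H^3}$ for all $h\in H^3(\R)$. On the other hand, writing $|\lambda|^2=(\re\lambda)^2+(\im\lambda)^2$ and using $(\re\lambda+b_\tau|\xi|^3)^2+(\im\lambda)^2\geq\max\{(\re\lambda)^2,(\im\lambda)^2\}$, one obtains $|\lambda|^2|\lambda-m_\tau(\xi)|^{-2}\leq2$ for every $\xi\in\R$, hence $|\lambda|\,\|h\|_2\leq\sqrt2\,\|(\lambda-\bA_{x_0,\tau})[h]\|_2$. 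Adding the two inequalities yields \eqref{14K} with a constant $\kappa_0$ depending only on $\|f\|_{H^2}$ (through $\|f'\|_\infty$), as claimed.

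I do not expect a genuine obstacle here: in contrast with Proposition \ref{PP1}, the first-order drift term is absent, so the symbol is purely real and there is no competition between its different homogeneous parts to control; the whole proof reduces to the two elementary scalar inequalities above together with the uniform two-sided bound on $b_\tau$. The only point requiring care is to keep every constant independent of $x_0$ and $\tau$, and this is automatic once $\|f'\|_\infty$ has been fixed.
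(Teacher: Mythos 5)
Your argument is correct and is precisely the adaptation of Proposition~\ref{PP1} that the paper has in mind: the paper in fact omits the proof of Proposition~\ref{PPK1}, stating only that it is analogous to that of Proposition~\ref{PP1}. You carry out that adaptation cleanly, and your observation that the symbol $-b_\tau|\xi|^3$ being real (no first-order drift) removes the need for the extra term-by-term estimate used in Proposition~\ref{PP1} is accurate; the two-sided bound on $b_\tau$ via $H^2(\R)\hookrightarrow{\rm BC}^1(\R)$ is exactly what makes $\kappa_0$ uniform in $x_0$ and $\tau$.
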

\begin{proof}
 The proof is similar to that of Proposition \ref{PP1} and therefore we omit it.
\end{proof}

We now conclude with the  following generation result.

\begin{thm}\label{TK2}
 Let $f\in H^2(\R)$  be given. Then 
 \begin{align*} 
  -\Phi_\sigma(f)\in \kH(H^3(\R), L_2(\R)).
 \end{align*}
\end{thm}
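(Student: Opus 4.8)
The plan is to follow exactly the strategy that was used to prove Theorem \ref{T2} in Section \ref{S3}, now in the surface-tension setting. First I would reduce the claim to the leading-order operator $\Phi_{\sigma,1}(f)$: since $\Phi_{\sigma,2}(f)\in\kL(H^{8/3}(\R),L_2(\R))$ and $H^{8/3}(\R)=[L_2(\R),H^3(\R)]_{8/9}$ by \eqref{IP}, the operator $\Phi_{\sigma,2}(f)$ is a lower-order perturbation in the sense of \cite[Proposition 2.4.1]{L95}, so it suffices to show
\[
-\Phi_{\sigma,1}(f)\in\kH(H^3(\R),L_2(\R)).
\]
This is the decomposition \eqref{MP2} already set up in the text.

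Next I would combine the local approximation result Theorem \ref{TK1} with the resolvent estimates Proposition \ref{PPK1}. Fix the constant $\kappa_0\geq1$ from Proposition \ref{PPK1} and apply Theorem \ref{TK1} with $\mu:=1/2\kappa_0$ to obtain $\e$, a finite $\e$-localization family $\{\pi_j^\e\,:\,-N+1\leq j\leq N\}$, a constant $K=K(\e)$, and operators $\bA_{j,\tau}\in\kL(H^3(\R),L_2(\R))$ with
\[
\|\pi_j^\e\Phi_{\sigma,1}(\tau f)[h]-\bA_{j,\tau}[\pi_j^\e h]\|_{2}\leq\frac{1}{2\kappa_0}\|\pi_j^\e h\|_{H^3}+K\|h\|_{H^2}.
\]
By Proposition \ref{PPK1}, $\kappa_0\|(\lambda-\bA_{j,\tau})[\pi_j^\e h]\|_2\geq|\lambda|\cdot\|\pi_j^\e h\|_2+\|\pi_j^\e h\|_{H^3}$ for $\re\lambda\geq1$. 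The triangle inequality then gives, for each $j$,
\[
\kappa_0\|\pi_j^\e(\lambda-\Phi_{\sigma,1}(\tau f))[h]\|_2\geq|\lambda|\cdot\|\pi_j^\e h\|_2+\tfrac12\|\pi_j^\e h\|_{H^3}-\kappa_0 K\|h\|_{H^2}.
\]
Summing over $j$ and using the equivalence of norms from Lemma \ref{L:EN2} (with $k\in\{0,3\}$), followed by \eqref{IP} and Young's inequality to absorb the lower-order term $\|h\|_{H^2}$, I obtain constants $\kappa\geq1$ and $\omega>0$ with
\[
\kappa\|(\lambda-\Phi_{\sigma,1}(\tau f))[h]\|_2\geq|\lambda|\cdot\|h\|_2+\|h\|_{H^3},\qquad\re\lambda\geq\omega,\ \tau\in[0,1],\ h\in H^3(\R).
\]

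Finally I would run the continuity-method argument: at $\tau=0$ we have $\Phi_{\sigma,1}(0)=-\pi(\p_x^4)^{3/4}$, so $\omega-\Phi_{\sigma,1}(0)=\omega+\pi(\p_x^4)^{3/4}\in{\rm Isom}(H^3(\R),L_2(\R))$; combining this with the uniform-in-$\tau$ estimate above and the method of continuity (\cite[Theorem 5.2]{GT01}) yields $\omega-\Phi_{\sigma,1}(f)\in{\rm Isom}(H^3(\R),L_2(\R))$. Then the estimate at $\tau=1$ together with \cite[Corollary 2.1.3]{L95} gives $-\Phi_{\sigma,1}(f)\in\kH(H^3(\R),L_2(\R))$, and adding back the lower-order perturbation $\Phi_{\sigma,2}(f)$ via \cite[Proposition 2.4.1]{L95} gives the claim. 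The only genuinely new input needed is Theorem \ref{TK1}, which is already proved above; given that, this final proof is essentially a verbatim repetition of the proof of Theorem \ref{T2}, and the main (minor) obstacle is just keeping track of the perturbation step for $\Phi_{\sigma,2}$ and verifying that the $H^2$-term can indeed be absorbed by interpolation between $L_2$ and $H^3$.
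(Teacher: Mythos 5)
Your proposal is correct and follows essentially the same route as the paper: reduce to $\Phi_{\sigma,1}(f)$ by treating $\Phi_{\sigma,2}(f)\in\kL(H^{8/3}(\R),L_2(\R))$ as a lower-order perturbation, feed Theorem \ref{TK1} (with $\mu=1/2\kappa_0$) and Proposition \ref{PPK1} into the localization/summation argument of Lemma \ref{L:EN2}, absorb the $\|h\|_{H^2}$-term by interpolation and Young's inequality, and finish with the method of continuity starting from $\Phi_{\sigma,1}(0)=-\pi(\p_x^4)^{3/4}$. The paper invokes Proposition 2.4.1 and Corollary 2.1.3 of \cite{L95} in exactly the same way at the end, so your write-up matches the intended proof.
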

\begin{proof}
As mentioned in the discussion preceding Theorem \ref{TK1}, we only need to prove the claim for the leading order term $\Phi_{\sigma,1}(f).$
 Let  $\kappa_0\geq1$ be the constant determined in Proposition \ref{PPK1} and let $\mu:=1/2\kappa_0$.
 In virtue of Theorem \ref{TK1} there exist   constants $\e\in(0,1) $    and $K=K(\e)>0$, a $\e$-localization family $\{\pi_j^\e\,:\, -N+1\leq j\leq N\}$ that satisfies \eqref{i}-\eqref{v}, 
 and for each  $ -N+1\leq j\leq N$ and $\tau\in[0,1]$
  operators $\bA_{j,\tau}\in\kL(H^3(\R), L_{2}(\R))$ such that
 \begin{equation}\label{DEK4}
  \|\pi_j^\e\Phi_{\sigma,1}(\tau f)[h]-\bA_{j,\tau}[\pi^\e_j h]\|_{2}\leq \frac{1}{2\kappa_0}\|\pi_j^\e h\|_{H^3}+K\|  h\|_{H^2}
 \end{equation}
 for all $-N+1\leq j\leq N$, $\tau\in[0,1],$ and  $h\in H^3(\R)$.
Furthermore, Proposition \ref{PPK1} implies
  \begin{equation}\label{DEK5}
    \kappa_0\|(\lambda-\bA_{j,\tau})[\pi^\e_jh]\|_{2}\geq |\lambda|\cdot\|\pi^\e_jh\|_{2}+ \|\pi^\e_j h\|_{H^3}
 \end{equation}
 for all $-N+1\leq j\leq N$, $\tau\in[0,1],$ $\lambda\in\C$ with $\re \lambda\geq 1$, and  $h\in H^3(\R)$.
 Combining \eqref{DEK4}-\eqref{DEK5}, we find
 \begin{align*}
   \kappa_0\|\pi_j^\e(\lambda-\Phi_{\sigma,1}(\tau f))[h]\|_{2}\geq& \kappa_0\|(\lambda-\bA_{j,\tau})[\pi^\e_jh]\|_{2}-\kappa_0\|\pi_j^\e\Phi_{\sigma,1}(\tau f)[h]-\bA_{j,\tau}[\pi^\e_j h]\|_{2}\\[1ex]
   \geq& |\lambda|\cdot\|\pi^\e_jh\|_{2}+ \frac{1}{2}\|\pi^\e_j h\|_{H^3}-\kappa_0K\|  h\|_{H^2}
 \end{align*}
for all $-N+1\leq j\leq N$, $\tau\in[0,1],$ $\lambda\in\C$ with $\re \lambda\geq 1$, and  $h\in H^3(\R)$.
 Summing up  over $j\in\{-N+1,\ldots, N\},$  we infer from Lemma \ref{L:EN2} that there exists a constant  $C\geq1$   with the property that
  \begin{align*}
   C\|  h\|_{H^2}+C\|(\lambda-\Phi(\tau f))[h]\|_{2}\geq |\lambda|\cdot\|h\|_{2}+ \| h\|_{H^3}
 \end{align*}
for all   $\tau\in[0,1],$ $\lambda\in\C$ with $\re \lambda\geq 1$, and  $h\in H^3(\R)$.
Using \eqref{IP} and Young's inequality, it follows that  there exist constants  $\kappa\geq1$ and $\omega> 0$ with the property that
 \begin{align}\label{KDED}
   \kappa\|(\lambda-\Phi_{\sigma,1}(\tau f))[h]\|_{2}\geq |\lambda|\cdot\|h\|_{2}+ \| h\|_{H^3}
 \end{align}
for  all  $\tau\in[0,1],$ $\lambda\in\C$ with $\re \lambda\geq \omega$,    $h\in H^3(\R)$.
Since $(\omega-\Phi_{\sigma,1}(\tau f))\big|_{\tau=0}\in {\rm Isom}(H^3(\R), L_2(\R)),$
 the method of continuity together with \eqref{KDED} yields that 
\begin{align}\label{DEDK2}
   \omega-\Phi_{\sigma,1}(f)\in {\rm Isom}(H^3(\R), L_2(\R)).
 \end{align}
 The claim follows from  \eqref{KDED} (with $\tau=1$),  \eqref{DEDK2}, and \cite[Proposition 2.4.1 and Corollary 2.1.3]{L95}.
\end{proof}\medskip

We are now in a position to prove the well-posedness for the Muskat problem with surface tension.
\begin{proof}[Proof of Theorem \ref{MT1K}]
 Let $s\in(2,3)$, $\ov s=2$, $1>\alpha:=s/3>\beta:=2/3>0$.
Combining \eqref{reg} and Theorem \ref{TK2}, it follows that
\[
-\Phi_\sigma\in C^{\omega}(H^{2}(\R),\kH(H^3(\R), L_2(\R))).
\]
Since
 \[
  H^{2}(\R)=[L_2(\R), H^3(\R)]_{\beta} \qquad\text{and}\qquad H^s(\R)=[L_2(\R), H^3(\R)]_{\alpha},
 \]
we  now infer from Theorem \ref{T:A}
that \eqref{P} (or equivalently \eqref{Pest1}) possesses a maximally defined solution
 \begin{equation*} 
 f:=f(\cdot; f_0)\in C([0,T_+(f_0)),H^s(\R))\cap C((0,T_+(f_0)), H^3(\R))\cap C^1((0,T_+(f_0)), L_2(\R))
 \end{equation*}
with \begin{equation*} 
 f\in C^{(s-2)/3}([0,T],H^{2}(\R)) \qquad\text{for all $T<T_+(f_0)$}.
 \end{equation*}

 Concerning  the uniqueness of solutions, we next show  that any classical solution 
 \begin{equation*} 
 \wt f\in C([0,\wt  T),H^s(\R))\cap C((0,\wt T), H^3(\R))\cap C^1((0,\wt T)), L_2(\R)),\qquad \wt T\in(0,\infty],
 \end{equation*}  
to \eqref{Pest1}   satisfies 
  \begin{equation} \label{T:EEE2}
 \wt f\in C^\eta([0,T],H^{2}(\R))\qquad \text{for all $T\in(0,\wt T)$,}
 \end{equation}
where $\eta:=(s-2)/(s+1).$ 
To this end, we recall that
\begin{align}\label{PPP}
 \Phi_\sigma(f)[f]=f'B_{1,1}(f,f)[(\kappa(f))']+B_{0,1}(f)[(\kappa (f))']+\Theta A_{0,0}(f)[f']\qquad\text{for $f\in H^3(\R)$.}
\end{align}
Let $T\in(0,\wt T)$ be fixed. 
 Lemma \ref{L21} $(i)$ implies that
 \begin{align}\label{L:BV1}
 \sup_{ [0,T]} \|A_{0,0}(\wt f)[\wt f']\|_{2}\leq C.
 \end{align}
 
We now consider the highest order terms in \eqref{PPP}.  
Arguing as in Lemma \ref{L31}, it follows  form Remark \ref{R2} that $B_{0,1}(f)[\kappa (f)],\, B_{1,1}(f,f)[\kappa(f)]\in H^1(\R)$ for all  $f\in H^3(\R),$ with
 \begin{align*}
  (B_{0,1}(f)[\kappa (f)])'=&B_{0,1}(f)[(\kappa (f))']-2B_{2,2}(f',f,f,f)[\kappa(f)]\\[1ex]
  (B_{1,1}(f,f)[\kappa (f)])'=&B_{1,1}(f,f)[(\kappa (f))']+B_{1,1}(f',f)[\kappa (f)]-2B_{3,2}(f',f,f,f,f)[\kappa(f)].
 \end{align*}
Furthermore, given $t\in(0,T]$ and $\varphi\in H^1(\R)$, integration by parts together with $\wt f\in C([0,T], H^s(\R))$ leads us to
\begin{align*}
\Big| \int_\R \wt f'(t)(B_{1,1}(\wt f(t),\wt f(t))[\kappa (\wt f(t))])'\varphi\, dx\Big|=&\Big| \int_\R \wt f''(t)B_{1,1}(\wt f(t),\wt f(t))[\kappa (\wt f(t))]\varphi\, dx\Big|\\[1ex]
&+\Big| \int_\R \wt f'(t) B_{1,1}(\wt f(t),\wt f(t))[\kappa (\wt f(t))] \varphi'\, dx\Big|\\[1ex]
\leq& C\|\varphi\|_{H^1},
\end{align*}
so  that 
\begin{align}\label{L:BV3}
 \sup_{ (0,T]} \|\wt f'(B_{1,1}(\wt f,\wt f)[\kappa (\wt f)])'\|_{H^{-1}}\leq C,
 \end{align}
and similarly
\begin{align}\label{L:BV4}
 \sup_{ (0,T]} \|(B_{0,1}(\wt f)[\kappa (\wt f)])'\|_{H^{-1}}\leq C.
 \end{align}
 
We now estimate the term $ f'B_{1,1}(f',f)[\kappa (f)]$ with $f\in H^3(\R)$ in the $H^{-1}(\R)$-norm.
To this end, we rely on the formula
\[
B_{1,1}(f',f)[\kappa (f)]=T_1(f)-T_2(f)-T_3(f), 
\]
 where
 \begin{align*}
  T_1(f)(x)&:=\int_0^\infty\frac{\kappa(f)(x-y)-\kappa(f)(x+y)}{y}\frac{f'(x)-f'(x-y)}{y}\frac{1}{1+\big(\delta_{[x,y]}f/y\big)^2}\, dy,\\[1ex]
  T_2(f)(x)&:=\int_0^\infty\frac{\kappa(f)(x+y)}{y}\frac{f'(x+y)-2f'(x)+f'(x-y)}{y}\frac{1}{1+\big(\delta_{[x,y]}f/y\big)^2}\, dy,\\[1ex]
  T_3(f)(x)&:=\int_0^\infty\frac{\kappa(f)(x+y)}{y}\frac{f'(x)-f'(x+y)}{y}\frac{1}{\big[1+\big(\delta_{[x,y]}f/y\big)^2\big]\big[1+\big(\delta_{[x,-y]}f/y\big)^2\big]}\\[1ex]
  &\hspace{1.5cm}\times\frac{f(x+y)-f(x-y)}{y}\frac{f(x+y)-2f(x)+f(x-y)}{y}\, dy.
 \end{align*}
We estimate the terms $\wt f'T_i(\wt f),$ $1\leq i\leq 3$, separately.
Given $t\in(0,T]$ and $\varphi\in H^{1}(\R)$, we compute
\begin{align*}
 &\Big| \int_\R \wt f'(t)T_1(\wt f(t))\varphi\, dx\Big|\\[1ex]
 &\hspace{1cm}\leq C\|\varphi\|_\infty\int_0^\infty\int_\R\frac{|\kappa(\wt f(t))(x-y)-\kappa(\wt f(t))(x+y)|}{y}\frac{|\wt f'(t,x)-\wt f'(t,x-y)|}{y}\, dx\, dy\\[1ex]
 &\hspace{1cm}\leq C\|\varphi\|_\infty\int_0^\infty \frac{1}{y^{2}}\Big(\int_\R |\kappa(\wt f(t))(x-y)-\kappa(\wt f(t))(x+y)|^2 \, dx\Big)^{1/2}\\[1ex]
 &\hspace{4.25cm}\times\Big(\int_\R |\wt f'(t,x)-\wt f'(t,x-y)|^2 \, dx\Big)^{1/2}\, dy\\[1ex]
 &\hspace{1cm}= C\|\varphi\|_\infty\int_0^\infty \frac{1}{y^{2}}\Big(\int_\R |\kF (\kappa(\wt f(t)))|^2(\xi)|e^{i2\xi y}-1|^2 \, d\xi\Big)^{1/2}\Big(\int_\R |\kF(\wt f'(t))|^2(\xi)|e^{iy\xi}-1|^2 \, d\xi\Big)^{1/2}\, dy,
\end{align*}
and since 
\[\begin{aligned}
&|e^{iy\xi}-1|^2\leq  C(1+|\xi|^2)\big[y^{2} {\bf 1}_{(0,1)}(y)+ {\bf 1}_{[y\geq1]}(y)\big],\\[1ex]
&|e^{i2y\xi}-1|^2\leq  C(1+|\xi|^2)^{s-2}\big[y^{2(s-2)} {\bf 1}_{(0,1)}(y)+ {\bf 1}_{[y\geq1]}(y)\big],
\end{aligned}\qquad y>0,\,\xi\in\R,\\[1ex]
\]
it follows that 
\begin{align}
 \Big| \int_\R \wt f'(t)T_1(\wt f(t))\varphi\, dx\Big|\leq& C\|\varphi\|_\infty \|\kappa(\wt f(t))|_{H^{s-2}}\|\wt f(t)\|_{H^1}
 \int_0^\infty y^{s-3} {\bf 1}_{(0,1)}(y)+ y^{-2}{\bf 1}_{[y\geq1]}(y)\, dy\nonumber\\[1ex]
 \leq& C\|\varphi\|_{H^1}.\label{EHH1}
\end{align}
To bound the curvature term in the $H^{s-2}(\R)$-norm we have use the inequality
 \[
 \|\kappa (f)\|_{H^{s-2}}\leq C\big\|(1+f'^2)^{-3/2}\big\|_{BC^{s-3/2}}\|f\|_{H^s}\qquad\text{for all $f\in H^s(\R)$.}
 \]
 
 Similarly we have 
 \begin{align}
 &\Big| \int_\R \wt f'(t)T_2(\wt f(t))\varphi\, dx\Big|\nonumber\\[1ex]
 &\hspace{1cm}\leq C\|\varphi\|_\infty\int_0^\infty \frac{1}{y^{2}}\Big(\int_\R |\kappa(\wt f(t))(x+y)|^2 \, dx\Big)^{1/2}\nonumber\\[1ex]
 &\hspace{4.25cm}\times\Big(\int_\R |\wt f'(t,x+y)-2\wt f'(t,x)+\wt f'(t,x-y)|^2 \, dx\Big)^{1/2}\, dy\nonumber\\[1ex]
 &\hspace{1cm}\leq C\|\varphi\|_\infty\int_0^\infty \frac{1}{y^{2}} \Big(\int_\R |\kF(\wt f'(t))|^2(\xi)|e^{iy\xi}-2+e^{-iy\xi}|^2 \, d\xi\Big)^{1/2}\, dy\nonumber\\[1ex]
 &\hspace{1cm}\leq C\|\varphi\|_\infty\int_0^\infty y^{ s-3} {\bf 1}_{(0,1)}(y)+ y^{-2}{\bf 1}_{[y\geq1]}(y)\, dy\nonumber\\[1ex]
 &\hspace{1cm}\leq C\|\varphi\|_{H^1}\label{EHH2}
\end{align}
by virtue of
 \[
 |e^{iy\xi}-2+e^{-iy\xi}|^2\leq  C(1+|\xi|^2)^{s-1}\big[y^{2(s-1)} {\bf 1}_{(0,1)}(y)+ {\bf 1}_{[y\geq1]}(y)\big],\qquad y>0,\,\xi\in\R.
\]
Lastly, since $H^{s-1}(\R)\hookrightarrow {\rm BC}^{s-3/2}(\R)$ for $s\neq 5/2$ (the estimate \eqref{EHH3} holds though also for $s=5/2$) and $H^{s}(\R)\hookrightarrow {\rm BC}^{1}(\R)$  the inequality
 \[
 |e^{iy\xi}-2+e^{-iy\xi}|^2\leq  C(1+|\xi|^2)^{2}\big[y^{4} {\bf 1}_{(0,1)}(y)+ {\bf 1}_{[y\geq1]}(y)\big],\qquad y>0,\,\xi\in\R,
\]
leads us to
\begin{align}
 &\Big| \int_\R \wt f'(t)T_3(\wt f(t))\varphi\, dx\Big|\nonumber\\[1ex]
 &\hspace{1cm}\leq C\|\varphi\|_\infty\int_0^\infty \frac{y^{\min\{1,s-3/2\}}}{y^{3}} \Big(\int_\R |\kF(\wt f(t))|^2(\xi)|e^{iy\xi}-2+e^{-iy\xi}|^2 \, d\xi\Big)^{1/2}\, dy\nonumber\\[1ex]
 &\hspace{1cm}\leq C\|\varphi\|_\infty\int_0^\infty  y^{\min\{0,s-5/2\}}  {\bf 1}_{(0,1)}(y)+ y^{-2}{\bf 1}_{[y\geq1]}(y)\, dy\nonumber\\[1ex]
 &\hspace{1cm}\leq C\|\varphi\|_{H^1}.\label{EHH3}
\end{align}
 Gathering \eqref{EHH1}-\eqref{EHH3}, we conclude that
 \begin{align}\label{L:BV5}
 \sup_{ (0,T]} \|\wt f'B_{1,1}(\wt f',\wt f)[\kappa (\wt f)]\|_{H^{-1}}\leq C,
 \end{align}
 and similarly we obtain
 \begin{align}\label{L:BV6}
 \sup_{ (0,T]} \big[\|\wt f'B_{3,2} (\wt f',\wt f, \wt f,\wt f,\wt f)[\kappa (\wt f)]\|_{H^{-1}}+\|B_{2,2} (\wt f', \wt f,\wt f,\wt f)[\kappa (\wt f)]\|_{H^{-1}}\big]\leq C.
 \end{align}
Combining \eqref{L:BV1}-\eqref{L:BV4}, \eqref{L:BV5}, and \eqref{L:BV6}, it follows that $\wt f\in {\rm BC}^1((0,T], H^{-1}(\R)).$
Recalling that $\eta=(s-2)/(s+1)$, \eqref{IP} together with the mean value theorem yields 
 \begin{align*}
  \|\wt f(t)-\wt f(s)\|_{H^{2}}\leq  \|\wt f(t)-\wt f(s)\|_{H^{-1}}^{\eta}\|\wt f(t)-\wt f(s)\|_{H^s}^{1-\eta}\leq C|t-s|^{\eta},\qquad t,s\in[0,T],
 \end{align*}
which proves \eqref{T:EEE2} and the uniqueness claim in Theorem \ref{MT1K}. 

Finally, let us assume that $T_+(f_0)<\infty$ and that
\[
\sup_{[0,T_+(f_0))}\|f(t)\|_{H^s}<\infty,
\]
Arguing as above, we find that 
\begin{align*}
  \|f(t)-f(s)\|_{H^{(s+2)/2}}\leq C|t-s|^{(s-2)/(2s+2)},\qquad t,s\in[0,T_+(f_0)).
 \end{align*} 
The criterion for global existence in Theorem \ref{T:A} applied for $ \alpha:=(s+2)/6$ and $ \beta:=2/3$ implies that
the solution can be continued on an interval $[0,\tau)$ with $\tau>T_+(f_0)$ and that
\[
 f\in C^{(s-2)/6}([0,T],H^{2}(\R))\qquad \text{for all $T\in(0,\tau)$.}
\]
The uniqueness claim in Theorem \ref{T:A} leads us to a contradiction. 
Hence our assumption was false and $T_+(f_0)=\infty$.

\end{proof}

%%%%%%%%%%%%%%%%%%%%%%%%%%%%%%%%%%%%%%%%%%%%%%%
%%%%%%%%%%%%%%%%%%%%%%%%%%%%%%%%%%%%%%%%%%%%%%
%%%%%%%%%%%%%%%%%%%%%%%%%%%%%%%%%%%%%%%%%%%%%%%
%%%%%%%%%%%%%%%%%%%%%%%%%%%%%%%%%%%%%%%%%%%%%%
%%%%%%%%%%%%%%%%%%%%%%%%%%%%%%%%%%%%%%%%%%%%%%%
%%%%%%%%%%%%%%%%%%%%%%%%%%%%%%%%%%%%%%%%%%%%%%
%%%%%%%%%%%%%%%%%%%%%%%%%%%%%%%%%%%%%%%%%%%%%%%
%%%%%%%%%%%%%%%%%%%%%%%%%%%%%%%%%%%%%%%%%%%%%%
\appendix
\section{Some technical results}\label{S:A}
 %%%%%%%%%%%%%%%%%%%%%%%%%%%%%%%%%%%%%%%%%%%%%%%
%%%%%%%%%%%%%%%%%%%%%%%%%%%%%%%%%%%%%%%%%%%%%%
%%%%%%%%%%%%%%%%%%%%%%%%%%%%%%%%%%%%%%%%%%%%%%%
%%%%%%%%%%%%%%%%%%%%%%%%%%%%%%%%%%%%%%%%%%%%%%
%%%%%%%%%%%%%%%%%%%%%%%%%%%%%%%%%%%%%%%%%%%%%%%
%%%%%%%%%%%%%%%%%%%%%%%%%%%%%%%%%%%%%%%%%%%%%%
%%%%%%%%%%%%%%%%%%%%%%%%%%%%%%%%%%%%%%%%%%%%%%%
%%%%%%%%%%%%%%%%%%%%%%%%%%%%%%%%%%%%%%%%%%%%%%

 The following lemma is used in the proof of Theorem \ref{T1}.
\begin{lemma}\label{LA1}
 Given $f\in H^s(\R),$ $ s\in(3/2,2),$ and $\tau\in[0,1]$, let 
 $a_{\tau}:\R\to\R$ be defined by
  \begin{align*}
a_{\tau}(x) :=&\PV\int_\R \frac{y}{y^2+\tau^2(\delta_{[x,y]}f )^2}\, dy,\qquad x\in\R.
\end{align*}
Let further $\alpha:=s/2-3/4\in(0,1).$
Then, $a_\tau\in{\rm BC}^{\alpha}(\R)\cap C_0(\R)$, 
\begin{align}\label{UET1}
 \sup_{\tau\in[0,1]}\|a_\tau \|_{{\rm BC}^{\alpha}}<\infty,
 \end{align}
and, given $\e_0>0$, there exists $\eta>0$ such that 
\begin{align}\label{UET2}
 \sup_{\tau\in[0,1]}\sup_{|x|\geq\eta}|a_\tau(x)|\leq \e_0.
\end{align}
\end{lemma}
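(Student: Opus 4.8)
The plan is to symmetrise the principal-value integral so as to reduce $a_\tau$ to a single absolutely convergent integral, and then to read off all three conclusions from a pair of pointwise estimates on the integrand. Writing $K_\tau(x,y):=y(y^2+\tau^2(\delta_{[x,y]}f)^2)^{-1}$ and substituting $y\mapsto-y$ on the negative half-line (using $(\delta_{[x,-y]}f)^2=(f(x+y)-f(x))^2$), one obtains, for every $\delta\in(0,1)$,
\[
\int_{\delta<|y|<1/\delta}K_\tau(x,y)\,dy=\int_\delta^{1/\delta}G_\tau(x,y)\,dy,\qquad
G_\tau(x,y):=\frac{y}{y^2+\tau^2u^2}-\frac{y}{y^2+\tau^2v^2}=\frac{\tau^2yPQ}{D_1D_2},
\]
where $u:=f(x)-f(x-y)$, $v:=f(x+y)-f(x)$, $P:=v-u=f(x+y)-2f(x)+f(x-y)$, $Q:=v+u=f(x+y)-f(x-y)$, $D_1:=y^2+\tau^2u^2$, $D_2:=y^2+\tau^2v^2$. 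Using $H^s(\R)\hookrightarrow{\rm BC}^{s-1/2}(\R)$ (so that $\|f'\|_\infty,[f']_{s-3/2}\le C\|f\|_{H^s}$), the bounds $y^4\le D_1D_2\le Cy^4$, and the second-difference inequality \eqref{MM}, I will prove the uniform estimate
\[
|G_\tau(x,y)|\le C\min\{y^{s-5/2},\,y^{-3}\}\qquad(x\in\R,\ y>0,\ \tau\in[0,1]),\quad C=C(\|f\|_{H^s}),
\]
by bounding $|P|\le Cy^{s-1/2}$ and $|Q|\le Cy$ for small $y$ and $|P|,|Q|\le C$ for large $y$. Since $s-5/2\in(-1,-1/2)$ this majorant is integrable on $(0,\infty)$; hence $\int_\delta^{1/\delta}G_\tau(x,y)\,dy\to\int_0^\infty G_\tau(x,y)\,dy$ as $\delta\to0$, the principal value exists, $a_\tau(x)=\int_0^\infty G_\tau(x,y)\,dy$, and $\sup_{\tau\in[0,1]}\|a_\tau\|_\infty<\infty$.

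For the H\"older bound fix $x_1,x_2$, put $h:=|x_1-x_2|$, and assume $h<1$ (the case $h\ge1$ follows from the uniform $L_\infty$ bound). I split $a_\tau(x_1)-a_\tau(x_2)=\int_0^\infty[G_\tau(x_1,y)-G_\tau(x_2,y)]\,dy$ over $y\le h$, $h\le y\le1$ and $y\ge1$. On $y\le h$ the crude estimate $|G_\tau(x_1,y)-G_\tau(x_2,y)|\le|G_\tau(x_1,y)|+|G_\tau(x_2,y)|\le Cy^{s-5/2}$ integrates to $Ch^{s-3/2}$. On the other two ranges I expand the difference through the product and quotient rules applied to $G_\tau=\tau^2yPQ/(D_1D_2)$, reducing matters to the factor differences $P(x_1,\cdot)-P(x_2,\cdot)$, $Q(x_1,\cdot)-Q(x_2,\cdot)$, $u(x_1,\cdot)^2-u(x_2,\cdot)^2$, $v(x_1,\cdot)^2-v(x_2,\cdot)^2$; each is controlled by writing it as $\varphi(y)-2\varphi(0)+\varphi(-y)$ or $\varphi(0)-\varphi(-y)$ with $\varphi(t):=f(x_1+t)-f(x_2+t)$ and combining $\|\varphi'\|_\infty\le[f']_{s-3/2}h^{s-3/2}$, $\|\varphi\|_\infty\le\|f'\|_\infty h$ with the H\"older continuity of $f'$. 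Together with $y^4\le D_1D_2\le Cy^4$ this yields $\int_h^1|G_\tau(x_1,y)-G_\tau(x_2,y)|\,dy\le Ch^{s-3/2}$ (up to a logarithmic factor, harmless since $t^{s-3/2}\log(1/t)$ is bounded on $(0,1]$) and $\int_1^\infty|G_\tau(x_1,y)-G_\tau(x_2,y)|\,dy\le Ch$. Since $\alpha=s/2-3/4\le\min\{s-3/2,1\}$ and $h<1$, all three contributions are $\le C(\|f\|_{H^s})h^\alpha$, uniformly in $\tau\in[0,1]$; together with the $L_\infty$ bound this proves \eqref{UET1} and $a_\tau\in{\rm BC}^\alpha(\R)$.

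For \eqref{UET2} and $a_\tau\in C_0(\R)$: given $\e_0>0$, the pointwise majorant provides $R>1$ with $\int_{(0,1/R)\cup(R,\infty)}|G_\tau(x,y)|\,dy<\e_0/2$ for all $x$ and $\tau$. On $1/R\le y\le R$ I use the Sobolev embedding $H^s(\R)\hookrightarrow C_0(\R)$ (valid for $s>1/2$): there $|G_\tau(x,y)|\le y^{-3}|P(x,y)||Q(x,y)|\le CR^3\sup_{|z|\ge|x|-R}|f(z)|$, whence $\int_{1/R}^R|G_\tau(x,y)|\,dy\le CR^4\sup_{|z|\ge|x|-R}|f(z)|\to0$ as $|x|\to\infty$, uniformly in $\tau\in[0,1]$. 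Choosing $\eta$ large enough that this last quantity is $<\e_0/2$ for $|x|\ge\eta$ gives \eqref{UET2}, and the same estimate shows that $a_\tau$ vanishes at infinity.

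The step I expect to be the main obstacle is the difference estimate of the second paragraph: one must organise the expansion of $G_\tau(x_1,y)-G_\tau(x_2,y)$ so that every factor-difference is assigned the right powers of $h$ and $y$, which integrate to the required $h^\alpha$ on $\{y\ge h\}$, while keeping the denominators bounded below by $y^4$. Everything else is routine bookkeeping with $\|f'\|_\infty,[f']_{s-3/2}\le C\|f\|_{H^s}$, the second-difference inequality \eqref{MM}, and the embedding $H^s(\R)\hookrightarrow C_0(\R)$.
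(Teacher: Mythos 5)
Your proposal is correct and follows essentially the same route as the paper: symmetrize the principal value to obtain the absolutely convergent kernel $I(x,y)=\tau^2\,P\,Q\,y/\Pi$, derive the pointwise majorant $\min\{y^{s-5/2},y^{-3}\}$ from \eqref{MM} and the embedding $H^s(\R)\hookrightarrow{\rm BC}^{s-1/2}(\R)$, control $a_\tau(x_1)-a_\tau(x_2)$ by a product-rule decomposition of the kernel difference combined with interpolation between $\|f'\|_\infty$ and $[f']_{s-3/2}$, and obtain the decay at infinity by combining the integrable tails with the fact that $Q$ vanishes uniformly on compact $y$-ranges as $|x|\to\infty$. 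The only cosmetic differences are that the paper organizes the H\"older estimate as a three-term split $T_1+T_2+T_3$ of the integrand (using an explicit identity for the second difference to avoid logarithms) rather than a split of the $y$-range, and it establishes \eqref{UET2} via a $\beta$-power interpolation on $|Q|$ rather than a cutoff at $1/R$, but both variants yield the stated bounds.
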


\begin{proof}
It holds that
  \begin{align*}
a_{\tau}(x) 
=&\tau^2\lim_{\delta \to 0 }\int_\delta^{1/\delta} \frac{f(x+y)-2f(x)+f(x-y)}{y^{2}}\frac{f(x+y)-f(x-y)}{y}\frac{y^4}{\Pi(x,y)}\, dy,\qquad x\in\R,
\end{align*}
 with
\[\Pi(x,y):= [y^2+\tau^2(\delta_{[x,-y]}f)^2][y^2+\tau^2(\delta_{[x,y]}f)^2].\]
 Letting
\[
I(x,y):=\tau^2\frac{f(x+y)-2f(x)+f(x-y)}{y^{2}}\frac{f(x+y)-f(x-y)}{y}\frac{y^4}{\Pi(x,y)}, \qquad\text{$(x,y)\in\R\times (0,\infty)$},
\]
it follows that
  \begin{align}\label{SE1}
   |I(x,y)|\leq 8\Big(\|f\|_\infty ^2 \frac{1}{y^3}{\bf 1}_{[1,\infty)}(y) +\|f'\|_\infty[f']_{s-3/2}  \frac{1}{y^{5/2-s}}{\bf 1}_{(0,1)}(y)\Big),\qquad\text{$(x,y)\in\R\times (0,\infty)$.}
\end{align}
The latter estimate was obtained by using the fact that  $f\in {\rm BC}^{s-1/2}(\R)$, $s-1/2\in (1,2)$, together with \eqref{MM}.
Hence,
  \begin{align*}
a_\tau(x)= \int_0^{\infty} I(x,y)\, dy,\qquad x\in\R,
\end{align*}
and $\sup_{\tau\in[0,1]}\|a_\tau \|_{\infty}<\infty$.
To estimate the H\"older seminorm of $a_\tau$, we compute for $x,x'\in\R$ that
 \begin{align}\label{Tip0}
|a_\tau(x)-a_\tau(x')|\leq \int_0^{\infty} |I(x,y)-I(x',y)|\, dy\leq T_1+T_2+T_3,
\end{align}
where
\begin{align*}
 T_1:=&\int_0^\infty \frac{|f(x+y)-2f(x)+f(x-y)|}{y^{2}}\frac{|[f(x+y)-f(x-y)]-[f(x'+y)-f(x'-y)]|}{y}\frac{y^4}{\Pi(x,y)}\, dy,\\[1ex]
  T_2:=&\int_0^\infty \frac{|[f(x+y)-2f(x)+f(x-y)]-[f(x'+y)-2f(x')+f(x'-y)]|}{y^{2}}\\[1ex]
  &\hspace{1cm}\times\frac{| f(x'+y)-f(x'-y)|}{y}\frac{y^4}{\Pi(x,y)}\, dy,\\[1ex]
   T_3:=&\int_0^\infty \frac{|f(x'+y)-2f(x')+f(x'-y)|}{y^{2}}\frac{|f(x'+y)-f(x'-y)|}{y}\frac{\big|\Pi(x,y)-\Pi(x',y)\big|}{y^4}\, dy.
\end{align*}
Using the mean value theorem, we have
\begin{align*}
 \frac{|[f(x+y)-f(x-y)]-[f(x'+y)-f(x'-y)]|}{y}\leq&2\int_0^1|f'(x+(2\tau-1)y)-f'(x'+(2\tau-1)y)|\, d\tau\\[1ex]
 \leq &2[f']_{s-3/2}|x-x'|^{s-3/2},\qquad y>0,
\end{align*}
and, similarly as above, we find that
\begin{align}\label{Tip1}
|T_1|\leq C\|f\|_{H^s}^2|x-x'|^{2\alpha}.
\end{align}

To deal with the second term we appeal to the formula
\begin{align*}
 f(x+y)-2f(x)+f(x-y)=&y[f'(x+y)-f'(x-y)]+y\int_0^{1}f'(x+\tau y)-f'(x+y)\, d\tau\\[1ex]
 &-y\int_{0}^1f'(x-\tau y)-f'(x-y)\, d\tau\qquad\text{for $x,$ $y\in\R$,}
\end{align*}
and we get
\begin{align*}
 \frac{|[f(x+y)-2f(x)+f(x-y)]-[f(x'+y)-2f(x')+f(x'-y)]|}{y^{2}}\leq T_{2a}+T_{2b}+T_{2c},
\end{align*}
where
\begin{align*}
 T_{2a}:=&\frac{| [f'(x+y)-f'(x-y)]-[f'(x'+y)-f'(x'-y)]|}{y}\\[1ex]
 \leq& 2[f']_{2\alpha}\Big(\frac{1}{y}{\bf 1}_{[1,\infty)}(y)|x-x'|^{2\alpha}+2\frac{1}{y^{1-\alpha}}{\bf 1}_{(0,1)}(y)|x-x'|^\alpha\Big), \\[1ex]
 T_{2b}:=&\frac{1}{y} \int_0^{1}\big|[f'(x+\tau y)-f'(x+y)]-[f'(x'+\tau y)-f'(x'+y)]\big|\, d\tau \\[1ex]
 \leq & 2[f']_{2\alpha}\Big(\frac{1}{y}{\bf 1}_{[1,\infty)}(y)|x-x'|^{2\alpha}+\frac{1}{y^{1-\alpha}}{\bf 1}_{(0,1)}(y)|x-x'|^\alpha\Big),\end{align*}
\begin{align*}
 T_{2c}:=&\frac{1}{y} \int_0^{1}\big|[f'(x-\tau y)-f'(x-y)]-[f'(x'-\tau y)-f'(x'-y)]\big|\, d\tau  \\[1ex]
 \leq & 2[f']_{2\alpha}\Big(\frac{1}{y}{\bf 1}_{[1,\infty)}(y)|x-x'|^{2\alpha}+\frac{1}{y^{1-\alpha}}{\bf 1}_{(0,1)}(y)|x-x'|^\alpha\Big),
\end{align*}
and therewith
\begin{align}\label{Tip2}
|T_2|\leq C\|f\|_{H^s}^2(|x-x'|^\alpha+|x-x'|^{2\alpha}).
\end{align}
Finally, since
\begin{align*}
  \frac{\big|\Pi(x,y)-\Pi(x',y)\big|}{y^4}\leq 4\|f'\|_\infty(1+\|f'\|_\infty^2)[f']_{2\alpha}|x-x'|^{2\alpha},
\end{align*}
we infer from \eqref{SE1} that 
\begin{align}\label{Tip3}
|T_3|\leq C\|f\|_{H^s}^4(1+\|f\|_{H^s}^2)(|x-x'|^\alpha+|x-x'|^{2\alpha}).
\end{align}
The relation \eqref{UET1} is a simple consequence of \eqref{Tip0}-\eqref{Tip3} and of     $\sup_{\tau\in[0,1]}\|a_\tau \|_{\infty}<\infty$.

To prove that $a_\tau$ vanishes at infinity let $\e_0>0$ be arbitrary. 
We write
\[
a_\tau(x)=\int_0^MI(x,y)\, dy+\int_M^\infty I(x,y)\, dy,\qquad x\in\R, 
\]
for some $M>1$ with
\[
\frac{4\|f\|_\infty^2}{M^2}\leq \frac{ \e_0}{2}.
\]
Recalling \eqref{SE1},  it follows that for all $x\in\R$ we have 
\[
\int_M^\infty |I(x,y)|\, dy\leq 8\|f\|_\infty^2\int_M^\infty\frac{1}{y^3}\, dy= \frac{4\|f\|_\infty^2}{M^2}\leq\frac{\e_0}{2}. 
\]
Let $\beta\in(0,1)$ be chosen such that $3/2+\beta<s$.  Since $f\in C_0(\R),$ there exists $\eta>M$ with  
$$|f(y)|\leq \Big[\frac{(s-3/2-\beta)\e_0 M^{3/2+\beta-s}}{32([f']_{s-3/2}\|f'\|_\infty^{1-\beta}+1)}\Big]^{1/\beta} \qquad\text{for all $|y|\geq \eta-M.$}$$ 
Using this inequality, it follows   that for all $|x|\geq \eta$ we have
\begin{align*}
|a_\tau(x)|\leq \int_0^M|I(x,y)|\, dy+ \frac{\e_0}{2}\leq\frac{\e_0}{2} +8[f']_{s-3/2}\|f'\|_\infty^{1-\beta}\int_0^M\frac{|f(x+y)-f(x-y)|^{\beta}}{y^{5/2+\beta-s}}\, dy\leq \e_0,
\end{align*}
hence $a_\tau\in C_0(\R) $ and \eqref{UET2} holds true. 
\end{proof}

The next result is used in Proposition \ref{PE}.

\begin{lemma}\label{L:A2} Given $f\in H^5(\R)$ and $\ov\omega\in H^2(\R)$, set
 \begin{equation}\label{V1'}
 \wt v(x,y):=\frac{1}{2\pi} \int_\R\frac{(-(y-f(s),x-s)}{ (x-s)^2+(y-f(s))^2 }\ov\omega(s)\, ds\qquad \text{in $\R^2\setminus[y=f(x)]$.}
\end{equation}
Let further  $\0_-:=[y<f(x)]$, $\0_+:=[y>f(x)]$, and $\wt v_\pm:=\wt v\big|_{\0_\pm}$. Then,     $\wt v_\pm\in C (\ov{\0_\pm})\cap C^1({\0_\pm})$ and
 \begin{equation}\label{LA2}
\wt v_\pm(x,y)\to0 \qquad\text{for \, $|(x,y)|\to\infty$}.
\end{equation} 
\end{lemma}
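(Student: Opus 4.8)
The plan is to prove Lemma~\ref{L:A2} by treating the singular behaviour near the curve $[y=f(x)]$ and the decay at infinity separately, exploiting the smoothing properties of the Cauchy–Biot–Savart kernel together with the Sobolev regularity of $f$ and $\ov\omega$. First I would reparameterize: the curve $[y=f(x)]$ is the graph of an $H^5$-function, hence a regular (Lipschitz, in fact $C^{3+}$) curve whose arclength element is comparable to $dx$; after the change of variables $s\mapsto x-s$ the integrand in \eqref{V1'} becomes
\[
\wt v(x,y)=\frac{1}{2\pi}\int_\R\frac{\big(-(y-f(x-s)),\,s\big)}{s^2+(y-f(x-s))^2}\,\ov\omega(x-s)\,ds .
\]
For a point $(x_0,y_0)\in\0_\pm$ with $\dist((x_0,y_0),[y=f(x)])\geq\rho>0$, the denominator $s^2+(y_0-f(x_0-s))^2$ is bounded below by $c\,(s^2+\rho^2)$ uniformly for $(x,y)$ in a small ball around $(x_0,y_0)$, so the kernel and all its $(x,y)$-derivatives are dominated by an $L_1\cap L_\infty$ function of $s$ times $|\ov\omega(x-s)|$; differentiation under the integral sign and dominated convergence then give $\wt v\in C^\infty$ away from the curve, in particular $\wt v_\pm\in C^1(\0_\pm)$.

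The delicate point is continuity of $\wt v_\pm$ \emph{up to} the boundary $[y=f(x)]$. Here I would invoke the classical theory of Cauchy-type integrals on regular curves (the reference \cite{JKL93} already cited in the proof of Proposition~\ref{PE}): writing $z=x+iy$ and $w=s+if(s)$, the field $\wt v$ is, up to a harmless rotation/conjugation, the Cauchy integral $\frac{1}{2\pi i}\int \frac{\ov\omega(s)\,(1+if'(s))}{w-z}\,dw$ plus a remainder with a weakly singular ($\log$-type) kernel. Since $f\in H^5(\R)\hookrightarrow {\rm BC}^{4}(\R)$ the curve is $C^{1,\alpha}$ and $\ov\omega\in H^2(\R)\hookrightarrow {\rm BC}^{3/2}(\R)$ is Hölder continuous, so the Plemelj–Sokhotski formulas apply: the non-tangential limits $\wt v_\pm(x,f(x))$ exist, are Hölder continuous in $x$, and are given precisely by the principal-value expression in \eqref{V2} of the main text. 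A short additional argument (splitting the $s$-integral into $|s|<\delta$, where one uses the Hölder modulus of continuity of $f'$ and $\ov\omega$ to control the symmetrized integrand, and $|s|\geq\delta$, which is continuous by dominated convergence) upgrades the non-tangential limit to a genuine limit, yielding $\wt v_\pm\in C(\ov{\0_\pm})$.

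For the decay \eqref{LA2}, fix $\e>0$. Using $\ov\omega\in H^2(\R)\subset L_1(\R)$, choose $R$ with $\int_{|s|\geq R}|\ov\omega|\,ds<\e$; the contribution of $|s|\geq R$ to $\wt v(x,y)$ is bounded by $\tfrac{1}{2\pi}\sup_{s}\big|\tfrac{(-(y-f(s)),x-s)}{(x-s)^2+(y-f(s))^2}\big|\int_{|s|\geq R}|\ov\omega|\,ds$, and since $|f|\leq\|f\|_\infty$ the kernel is uniformly bounded, so this term is $O(\e)$. For the compact part $|s|\leq R$, when $|(x,y)|\to\infty$ with $(x,y)\in\R^2\setminus[y=f(x)]$ one has $(x-s)^2+(y-f(s))^2\to\infty$ uniformly in $|s|\leq R$ (because $|f|$ is bounded), so the kernel tends to $0$ uniformly there and the integral over $|s|\leq R$ is $O(\e)$ for $|(x,y)|$ large. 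Hence $\wt v(x,y)\to 0$, which restricts to $\wt v_\pm$.

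The main obstacle I expect is the boundary-continuity claim $\wt v_\pm\in C(\ov{\0_\pm})$: one must be careful that the limit is taken in the full (not merely non-tangential) sense and that the principal value at the origin is well-defined, which is exactly where the Hölder regularity coming from $f\in H^5$ and $\ov\omega\in H^2$ is essential (a merely Lipschitz curve with merely bounded density would not suffice for the pointwise Plemelj formula in this form). Everything else — the interior $C^1$-regularity and the decay at infinity — is a routine application of differentiation under the integral sign and dominated convergence, using only $f\in L_\infty$ and $\ov\omega\in L_1\cap L_2$.
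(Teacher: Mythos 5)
Your treatment of the interior $C^1$-regularity and the boundary limits via the Plemelj formula is in the same spirit as the paper's proof (which likewise quotes Plemelj and Privalov from \cite{JKL93}). The genuine gap is in the decay argument. For the tail $|s|\geq R$ you claim that ``since $|f|\leq\|f\|_\infty$ the kernel is uniformly bounded,'' but the magnitude of the kernel is $\big((x-s)^2+(y-f(s))^2\big)^{-1/2}$, which is of order $1/\dist\big((x,y),[y=f(x)]\big)$ near $s=x$ and hence blows up as $(x,y)$ approaches the interface. Since $|(x,y)|\to\infty$ in $\ov{\0_\pm}$ includes sequences that hug the curve (e.g.\ $x\to\infty$ with $y-f(x)\to 0$), the bound you use is not uniform and the tail contribution is not $O(\e)$ there. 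Your argument only proves decay along sequences that stay a fixed distance from the curve.

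To close the gap one must control the boundary behaviour at infinity separately, which is what the paper does: it shows (i) the trace values $\wt v_\pm(x,f(x))$ tend to zero as $|x|\to\infty$, because by Plemelj they equal $\pm\tfrac12\tfrac{(1,f')\ov\omega}{1+f'^2}-\tfrac{1}{2\pi}B_{1,1}(f,f)[\ov\omega]+\tfrac{i}{2\pi}B_{0,1}(f)[\ov\omega]$ and the $B$-operators map $\ov\omega\in H^2(\R)$ into $H^1(\R)\hookrightarrow C_0(\R)$; (ii) a Privalov-type estimate $|\wt v_\pm(x,y)-\wt v_\pm(x,f(x))|\leq C|y-f(x)|^{1/2}$ with $C$ \emph{uniform in} $x$, so closeness to the curve forces $\wt v_\pm$ close to a trace value that itself decays; and then combines this with the interior decay you do have. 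Without the quantitative, $x$-uniform Hölder control near the curve (your continuity argument only gives pointwise continuity up to $\ov{\0_\pm}$, not uniformity) and the $H^1$-mapping property of the singular integral operators applied to $\ov\omega$, the decay claim \eqref{LA2} is not proved.
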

\begin{proof}
 It is easy to see that $\wt v_\pm  \in C^1({\0_\pm}).$ Plemelj's formula further shows that $\wt v_\pm \in  C (\ov{\0_\pm})$ and
\begin{align*}
\wt v_\pm(x,f(x))=\frac{1}{2\pi}\PV\int_\R\frac{(-(f(x)-f(x-s)),s)}{ s^2+(f(x)-f(x-s))^2 }\ov\omega(x-s)\, ds \mp\frac{1}{2}\frac{(1,f'(x))\ov\omega(x)}{1+{f'}^2(x)},\qquad x\in\R,
\end{align*}
 or equivalently, with the notation in Remark \ref{R2}, 
 \[
 \wt v_\pm|_{[y=f(x)]}=\pm\frac{1}{2}\frac{(1,f' )\ov\omega }{1+{f'}^2 } -\frac{1}{2\pi}B_{1,1}(f,f)[\ov\omega]+\frac{i}{2\pi}B_{0,1}(f)[\ov\omega].
 \]
 
 Recalling that  $f\in H^5(\R)$ and $\ov\omega\in H^2(\R)$, the arguments in the proof of Lemma \ref{L31} show that $B_{1,1}(f,f)[\ov\omega] $ and  $ B_{0,1}(f)[\ov\omega]$ belong to $ H^1(\R)$,
 thus    
 \begin{equation}\label{s1}
 \wt v_\pm(x,f(x))\to0\qquad\text{for $|x|\to\infty.$}
\end{equation} 
 Furthermore,  since $f$ and $\ov\omega$ vanish at infinity, we find, similarly as in the proof of \eqref{UET2},  that
 \begin{equation}\label{s2}
 \sup_{[ y\geq n]}|\wt v_+| +\sup_{[ y\leq -n]}|\wt v_-|\to 0
 \qquad\text{for  $n\to\infty$}
\end{equation} 
and that, for arbitrary $0<a<b$, 
\begin{equation}\label{s3}
 \sup_ {[a\leq y\leq b]\cap  [|x|\geq n]}| \wt v_+|+\sup_{[-b\leq y\leq -a]\cap [ |x|\geq n]}| \wt v_-|\to 0
\qquad\text{for  $n\to \infty$}.
\end{equation}

Finally, arguing along the lines of the proof of Privalov's theorem, cf. \cite[Theorem 3.1.1]{JKL93} (the lengthy details, which for $\ov\omega\in W^1_\infty(\R)$ are simpler than in \cite{JKL93}, are left to the interesting reader), 
it follows that there exists a constant $C$, which depends only on $f$ and $\ov\omega,$ such 
that for each $z=(x,y)\in  \R^2\setminus[y=f(x)]$  with 
$y\in [-\|f\|_\infty-1,\|f\|_\infty+1]$ the following inequalities hold
\begin{equation}\label{s4}
\begin{aligned}
 & |\wt v_+(z)-\wt v_+(x,f(x))|\leq C|y-f(x)|^{1/2} \qquad\text{if $y>f(x)$,}\\[1ex]
 & |\wt v_-(z)-\wt v_-(x,f(x))|\leq C|y-f(x)|^{1/2} \qquad\text{if $y<f(x)$.}
\end{aligned}
\end{equation}
The relation \eqref{LA2} is an obvious consequence of \eqref{s1}-\eqref{s4}.
\end{proof}

%%%%%%%%%%%%%%%%%%%%%%%%%%%%%%%%%%%%%%%%%%%%%%%
%%%%%%%%%%%%%%%%%%%%%%%%%%%%%%%%%%%%%%%%%%%%%%
%%%%%%%%%%%%%%%%%%%%%%%%%%%%%%%%%%%%%%%%%%%%%%%
%%%%%%%%%%%%%%%%%%%%%%%%%%%%%%%%%%%%%%%%%%%%%%
%%%%%%%%%%%%%%%%%%%%%%%%%%%%%%%%%%%%%%%%%%%%%%%
%%%%%%%%%%%%%%%%%%%%%%%%%%%%%%%%%%%%%%%%%%%%%%
%%%%%%%%%%%%%%%%%%%%%%%%%%%%%%%%%%%%%%%%%%%%%%%
%%%%%%%%%%%%%%%%%%%%%%%%%%%%%%%%%%%%%%%%%%%%%%

\section{The proof of Theorem \ref{T:A}}\label{S:B}
 %%%%%%%%%%%%%%%%%%%%%%%%%%%%%%%%%%%%%%%%%%%%%%%
%%%%%%%%%%%%%%%%%%%%%%%%%%%%%%%%%%%%%%%%%%%%%%
%%%%%%%%%%%%%%%%%%%%%%%%%%%%%%%%%%%%%%%%%%%%%%%
%%%%%%%%%%%%%%%%%%%%%%%%%%%%%%%%%%%%%%%%%%%%%%
%%%%%%%%%%%%%%%%%%%%%%%%%%%%%%%%%%%%%%%%%%%%%%%
%%%%%%%%%%%%%%%%%%%%%%%%%%%%%%%%%%%%%%%%%%%%%%
%%%%%%%%%%%%%%%%%%%%%%%%%%%%%%%%%%%%%%%%%%%%%%%
%%%%%%%%%%%%%%%%%%%%%%%%%%%%%%%%%%%%%%%%%%%%%%
This section is dedicated to the proof of Theorem \ref{T:A}.
In the following $\E_0$ and $\E_1$ denote complex Banach spaces\footnote{The proof of Theorem \ref{T:A} in the  context
of real Banach spaces is identical.} and we assume that the embedding  $\E_1\hookrightarrow\E_0$ is dense.
In view of \cite[Theorem I.1.2.2]{Am95}, we may represent the set $\kH(\E_1,\E_0)$ of negative analytic generators
as  
\[
\kH(\E_1,\E_0) =\underset{\tiny\begin{array}{lll}\kappa\geq1\\[-0.1ex] \omega>0\end{array}}\bigcup\kH(\E_1,\E_0,\kappa,\omega),
\]
where, given $\kappa\geq 1$ and $\omega>0$,  the class  $\kH(\E_1,\E_0,\kappa,\omega)$ consists of the operators $\bA\in\kL(\E_1,\E_0)$ having the properties that
\begin{align*}
\bullet &\quad \text{$\omega+\bA\in{\rm Isom}(\E_1,\E_0),$ and}\\[1ex]
\bullet &\quad \text{$\kappa^{-1}\leq \frac{\|(\lambda+\bA)x\|_0}{|\lambda|\cdot\|x\|_0+\|x\|_1}\leq\kappa$\quad for all $0\neq x\in\E_1 $ and all $ \re\lambda\geq\omega.$}
\end{align*}
Given $\bA\in\kH(\E_1,\E_0,\kappa,\omega)$ and $r\in(0, \kappa^{-1})$, it follows from \cite[Theorem I.1.3.1 $(i)$]{Am95}  that 
  \begin{align}\label{OPEN}
  \bA+B\in\kH(\E_1,\E_0,\kappa/(1-\kappa r),\omega) \qquad\text{for all $  \|B\|_{\kL(\E_1,\E_0)}\leq r.$}
  \end{align}
This property shows in particular that    $\kH(\E_1,\E_0)$ is an open subset of $\kL(\E_1,\E_0)$.

The proof of Theorem \ref{T:A} uses to a large extent the powerful theory of parabolic evolution operators developed in \cite{Am95}.
The following result is a direct consequence of \cite[Theorem II.5.1.1, Lemma  II.5.1.3,  Lemma II.5.1.4]{Am95}.

\begin{prop}\label{T:Aa} Let $T>0$, $\rho\in(0,1) $, $L \geq0$, $\kappa\geq 1$, and $\omega>0 $ be given constants.
 Moreover, let $\mathcal{A}\subset  C^\rho([0,T], \kH(\E_1,\E_0))$  be a family satisfying  
\begin{align*}
\bullet &\quad \text{$[\bA]_{\rho,[0,T]}:=\sup_{t\neq s\in[0,T]}\frac{\|\bA(t)-\bA(s)\|}{|t-s|^\rho}\leq L $ for all $\bA\in\mathcal{A},$ and}\\[1ex]
\bullet &\quad \text{$\bA(t)\in \kH(\E_1,\E_0,\kappa,\omega)$ for all $\bA\in\mathcal{A}$ and $t\in[0,T]$.}
\end{align*}
Then, given  $\bA\in\A,$ there exists a unique parabolic evolution
operator\footnote{In the sense of \cite[Section II]{Am95}.} $U_\bA$  for $\bA$ possessing $\E_1$ as a regularity subspace.
Moreover, the following hold.
\begin{itemize}
\item[$(i)$] There exists a constant $C>0$ such that 
\begin{align}\label{EOP1}
\|U_\bA(t,s)\|_{\kL(\E_j)}+(t-s)\|U_\bA(t,s)\|_{\kL(\E_0,\E_1)}\leq C
\end{align}
for all $(t,s)\in \Delta_T^*:=\{(t,s)\in[0,T]^2\,:\, 0\leq s<t\leq T\}$, $j\in\{0,\,1\}$, and  all $\bA\in\A$.\\[-1ex]
\item[$(ii)$] Let $\Delta_T:=\{(t,s)\in[0,T]^2\,:\, 0\leq s\leq t\leq T\}$ and $0\leq \beta\leq \alpha\leq 1$. Then, given $x\in\E_\alpha$, it holds that
  $U_\bA(\,\cdot\,,\,\cdot\,)x\in C(\Delta_T, \E_\alpha) $. Moreover, $U_\bA\in C(\Delta_T^*,\kL(\E_\beta,\E_\alpha))$, and there exists a constant $C>0$ such that 
\begin{align}\label{EOP2}
 (t-s)^{\alpha-\beta}\|U_\bA(t,s)\|_{\kL(\E_\beta,\E_\alpha)}\leq C
\end{align}
for all $(t,s)\in \Delta_T^* $ and  all $\bA\in\A$.\\[-1ex]

\item[$(iii)$] Given $0\leq \beta< 1$ and $0<\alpha\leq 1$, there exists a constant $C>0$ such that
\begin{align}\label{EOP3}
 (t-s)^{\beta-\alpha}\|(U_\bA-U_\bB)(t,s)\|_{\kL(\E_\alpha,\E_\beta)}\leq C \max_{\tau\in[s,t]} \|\bA(\tau)-\bB(\tau)\|_{\kL(\E_1,\E_0)}
\end{align}
for all $(t,s)\in \Delta_T^* $ and  all $\bA,\, \bB\in\A$.
\end{itemize}
\end{prop}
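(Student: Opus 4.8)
The statement to prove is Proposition \ref{T:Aa}, which reads off parabolic evolution operators from Amann's monograph.

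The plan is to derive everything from the three cited results \cite[Theorem II.5.1.1, Lemma II.5.1.3, Lemma II.5.1.4]{Am95}, the point being that the uniformity of the constants over the family $\A$ is not stated in those references but follows because the relevant quantities there depend on $\bA$ only through $T$, $\rho$, $L$, $\kappa$, $\omega$. Concretely, first I would recall that the hypotheses $\bA\in C^\rho([0,T],\kH(\E_1,\E_0))$ with H\"older seminorm bounded by $L$ and $\bA(t)\in\kH(\E_1,\E_0,\kappa,\omega)$ for all $t$ are precisely the standing assumptions under which \cite[Theorem II.5.1.1]{Am95} produces a unique parabolic evolution operator $U_\bA$ on $\Delta_T$ having $\E_1$ as a regularity subspace; this gives existence and uniqueness of $U_\bA$.

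For part $(i)$, the bounds \eqref{EOP1} are the standard estimates for $U_\bA(t,s)$ as a map $\E_j\to\E_j$ and the smoothing estimate $\E_0\to\E_1$ from \cite[Theorem II.5.1.1]{Am95}; I would simply note that the constant produced there is a continuous (indeed explicit) function of $(T,\rho,L,\kappa,\omega)$ alone, hence can be chosen uniformly over $\A$. For part $(ii)$, the continuity statements $U_\bA(\cdot,\cdot)x\in C(\Delta_T,\E_\alpha)$ for $x\in\E_\alpha$ and $U_\bA\in C(\Delta_T^*,\kL(\E_\beta,\E_\alpha))$, together with the interpolation-type smoothing bound \eqref{EOP2}, come from \cite[Lemma II.5.1.3]{Am95}; again the constant in \eqref{EOP2} depends only on the listed data because it is obtained by interpolating \eqref{EOP1} using the reiteration/interpolation inequality $\|U_\bA(t,s)\|_{\kL(\E_\beta,\E_\alpha)}\lesssim \|U_\bA(t,s)\|_{\kL(\E_0)}^{1-(\alpha-\beta)}\|U_\bA(t,s)\|_{\kL(\E_0,\E_1)}^{\alpha-\beta}$ and the analogous bound with the roles adjusted, each factor already controlled uniformly by $(i)$. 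For part $(iii)$, the difference estimate \eqref{EOP3} is exactly \cite[Lemma II.5.1.4]{Am95}: the evolution operator depends Lipschitz-continuously on the generator in the indicated operator norms, with a constant that is again a function of $(T,\rho,L,\kappa,\omega)$ only; I would quote this and observe that applying it to two elements $\bA,\bB$ of the same family $\A$ yields the claimed uniform bound.

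The only genuine work, and the place I expect to have to be slightly careful, is checking that the constants in \cite{Am95} can indeed be taken uniform over $\A$ rather than depending on the individual operator $\bA$. This is essentially a bookkeeping matter: one traces through the proofs in \cite[Section II.5]{Am95} and verifies that every estimate is stated in terms of $T$, the H\"older exponent $\rho$, a bound $L$ on the H\"older seminorm, and the sector constants $\kappa,\omega$ defining the class $\kH(\E_1,\E_0,\kappa,\omega)$ — all of which are fixed across $\A$ by hypothesis. Since no further input from $\bA$ enters, the constants are uniform, and the proposition follows. I would therefore present the proof as a short reduction: state the three lemmas, point out the uniform dependence of constants, and conclude. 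This is why the paper itself records (in the surrounding text) that the result is ``a direct consequence'' of the cited statements.
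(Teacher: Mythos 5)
Your proposal is correct and takes the same route as the paper, which does not give a proof at all but simply records the proposition as a direct consequence of \cite[Theorem II.5.1.1, Lemma II.5.1.3, Lemma II.5.1.4]{Am95}; your added remarks on tracing the uniformity of the constants through those proofs are exactly the right observation. (One small imprecision: the inline interpolation inequality you sketch for \eqref{EOP2}, interpolating only the source space, is not literally a valid operator-norm bound when $\alpha\neq\beta$ — but this does not matter since you correctly appeal to Lemma II.5.1.3, which supplies the estimate properly.)
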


Let now $\A$ be a family as in Proposition \ref{T:Aa}.
Given $\bA\in\A$ and $x\in\E_0$, we   consider the linear problem
\begin{align}\label{LP0x}
\dot u+\bA(t)u=0,\quad t\in(0,T], \qquad u(0)=x. 
\end{align}
Using the fundamental properties of the parabolic evolution
operator $U_\bA$ associated to $\bA$, it follows from \cite[Remark II.2.1.2]{Am95} that \eqref{LP0x} has a unique classical solution $u:=u(\,\cdot\,; x,\bA)$, that is
$$u:=u(\,\cdot\,; x,\bA)\in C^1((0,T],\E_0)\cap C((0,T],\E_1)\cap C([0,T],\E_0) $$
and $u$ satisfies the equation of \eqref{LP0x} pointwise.
This solution is given by the expression
\[u(t)=U_\bA(t,0)x,\qquad t\in[0,T].\] 
If $x\in\E_\alpha$ for some $\alpha\in(0,1)$, we may use  the relations \eqref{EOP1}-\eqref{EOP3} to derive additional regularity properties for the solution, as stated below.

\begin{prop}\label{T:Ab}
Let $\A$ be a family as in Proposition \ref{T:Aa}. The following hold true.
\begin{itemize}
\item[$(i)$]  Let $0\leq\beta\leq\alpha<1$ and $x\in\E_\alpha$. Then $u\in C^{\alpha-\beta}([0,T],\E_\beta) $ and  there exists $C>0$ such that
\begin{align}\label{EOP4}
 \|u(t)-u(s)\|_{\beta}\leq C (t-s)^{\alpha-\beta}\|x\|_\alpha
\end{align}
for all $(t,s)\in \Delta_T $, $x\in\E_\alpha$, and  $\bA \in\A$.\\[-1ex]
\item[$(ii)$] Let $0\leq \beta<\alpha\leq 1.$ Then, there exists $C>0$ such that
\begin{align}\label{EOP5}
 \|u(t;x,\bA)-u(t;x,\bB)\|_{\beta}\leq C  t^{\alpha-\beta}\max_{\tau\in[0,t]} \|\bA(\tau)-\bB(\tau)\|_{\kL(\E_1,\E_0)}\|x\|_\alpha 
\end{align}
for all  $t\in [0,T] $, $x\in\E_\alpha$, and  $\bA,\,\bB \in\A$.
\end{itemize}
\end{prop}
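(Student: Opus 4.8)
The plan is to read off both assertions from the estimates of Proposition \ref{T:Aa} for the parabolic evolution operator $U_\bA$, using only interpolation and the fundamental theorem of calculus; recall throughout that $u(t;x,\bA)=U_\bA(t,0)x$.

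Assertion $(ii)$ is essentially immediate. Since $u(t;x,\bA)-u(t;x,\bB)=(U_\bA(t,0)-U_\bB(t,0))x$, I would apply \eqref{EOP3} with $s=0$ (admissible because $0\le\beta<\alpha\le1$), which gives
\[
\|u(t;x,\bA)-u(t;x,\bB)\|_\beta\le\|(U_\bA-U_\bB)(t,0)\|_{\kL(\E_\alpha,\E_\beta)}\,\|x\|_\alpha\le C\,t^{\alpha-\beta}\Big(\max_{\tau\in[0,t]}\|\bA(\tau)-\bB(\tau)\|_{\kL(\E_1,\E_0)}\Big)\|x\|_\alpha,
\]
that is, \eqref{EOP5}; for $t=0$ both sides vanish.

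For assertion $(i)$ I would first interpolate the bounds \eqref{EOP1}: since $\E_\theta=[\E_0,\E_1]_\theta$ by definition, one gets $\|U_\bA(t,s)\|_{\kL(\E_\alpha)}\le C$ and $\|U_\bA(t,s)\|_{\kL(\E_\alpha,\E_1)}\le C(t-s)^{\alpha-1}$, uniformly in $(t,s)\in\Delta_T^*$ and $\bA\in\A$. Using the evolution law $U_\bA(t,0)=U_\bA(t,s)U_\bA(s,0)$ together with $\|U_\bA(s,0)x\|_\alpha\le C\|x\|_\alpha$, the inequality \eqref{EOP4} reduces to the key estimate $\|(U_\bA(t,s)-\id)z\|_\beta\le C(t-s)^{\alpha-\beta}\|z\|_\alpha$ for $(t,s)\in\Delta_T$, $z\in\E_\alpha$, $\bA\in\A$. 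I would prove this at the two endpoints and interpolate in the target space. At $\beta=\alpha$ it is just the uniform boundedness $\|(U_\bA(t,s)-\id)z\|_\alpha\le C\|z\|_\alpha$ noted above. At $\beta=0$, I would use that, by the classical solvability of \eqref{LP0x} (with initial time $s$ in place of $0$), the map $\sigma\mapsto U_\bA(\sigma,s)z$ lies in $C^1((s,T],\E_0)\cap C([s,T],\E_0)$ with $\partial_\sigma U_\bA(\sigma,s)z=-\bA(\sigma)U_\bA(\sigma,s)z$, so that the fundamental theorem of calculus in $\E_0$ yields $(U_\bA(t,s)-\id)z=-\int_s^t\bA(\sigma)U_\bA(\sigma,s)z\,d\sigma$; since $\|\bA(\sigma)U_\bA(\sigma,s)z\|_0\le C\|U_\bA(\sigma,s)z\|_1\le C(\sigma-s)^{\alpha-1}\|z\|_\alpha$ is integrable over $(s,t)$, its integral is $\le C(t-s)^\alpha\|z\|_\alpha$. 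Finally, viewing $U_\bA(t,s)-\id$ as an operator from $\E_\alpha$ into $\E_0$ (norm $\le C(t-s)^\alpha$) and into $\E_\alpha$ (norm $\le C$), the complex method gives the norm bound $C(t-s)^{\alpha(1-\beta/\alpha)}=C(t-s)^{\alpha-\beta}$ into $[\E_0,\E_\alpha]_{\beta/\alpha}$, and by reiteration of the complex interpolation functor (using the density of $\E_1$ in $\E_0$, or simply \eqref{IP} in the concrete $H^s$-setting) $[\E_0,\E_\alpha]_{\beta/\alpha}=[\E_0,\E_1]_\beta=\E_\beta$. Composing with $U_\bA(s,0)$, and observing that for $\alpha=\beta$ the continuity $u\in C([0,T],\E_\alpha)$ is already contained in Proposition \ref{T:Aa}$(ii)$, one obtains $u\in C^{\alpha-\beta}([0,T],\E_\beta)$ together with \eqref{EOP4}.

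The one point requiring genuine care, and the place I expect the only real difficulty, is the $\beta=0$ endpoint: one must justify the integral representation up to the singular endpoint $\sigma=s$ (letting an interior point $s'\downarrow s$, using continuity of $U_\bA(\cdot,s)z$ at $s$ and the $L^1$-bound on the integrand coming from the interpolated smoothing estimate) and make sure all constants are uniform over $\A$, which they are because the constants in Proposition \ref{T:Aa} are. Everything else is routine interpolation bookkeeping.
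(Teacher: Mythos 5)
Your proof is correct. The paper, however, settles the proposition by a direct citation to Amann's monograph---Theorem II.5.3.1 of \cite{Am95} for $(i)$ and Theorem II.5.2.1 of \cite{Am95} for $(ii)$---whereas you supply a self-contained derivation from the estimates collected in Proposition \ref{T:Aa}. For $(ii)$, reading off \eqref{EOP5} from \eqref{EOP3} with $s=0$ (and noting both sides vanish at $t=0$) is indeed all there is to do, and the parameter range $0\le\beta<\alpha\le1$ sits inside the range of \eqref{EOP3}. For $(i)$, your decomposition $u(t)-u(s)=(U_\bA(t,s)-\id)\,U_\bA(s,0)x$, the integral representation $(U_\bA(t,s)-\id)z=-\int_s^t\bA(\sigma)U_\bA(\sigma,s)z\,d\sigma$ giving the $\E_0$-endpoint, and interpolation in the target space are exactly the standard ingredients of Amann's proof, so what you gain is self-containedness rather than a shorter or conceptually new argument. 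Two small items worth stating explicitly: the bound $\|\bA(\sigma)U_\bA(\sigma,s)z\|_0\le C\|U_\bA(\sigma,s)z\|_1$ uses a uniform estimate $\|\bA(\sigma)\|_{\kL(\E_1,\E_0)}\le C$ over $\bA\in\A$ and $\sigma\in[0,T]$, which follows from the uniform membership $\bA(\sigma)\in\kH(\E_1,\E_0,\kappa,\omega)$ postulated in Proposition \ref{T:Aa} by specializing the resolvent inequality to $\lambda=\omega$; and the reiteration identity $[\E_0,\E_\alpha]_{\beta/\alpha}=\E_\beta$ that closes your interpolation step, which you rightly flag, does hold here since $\E_1$ is dense in $\E_0$ (and in the concrete $H^s$-setting is simply \eqref{IP}). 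Your separate treatment of $\alpha=\beta$ via Proposition \ref{T:Aa}$(ii)$ correctly covers the case $\alpha=0$, where the $\E_0$-endpoint estimate would otherwise degenerate.
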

\begin{proof}
The claim $(i)$ follows from \cite[Theorem II.5.3.1]{Am95}, while $(ii)$ is a consequence of \cite[Theorem II.5.2.1]{Am95}.
\end{proof}

By means of a contraction  argument we   now obtain as a preliminary result the following (uniform) local existence theorem which stays at the basis of Theorem \ref{T:A}.

\begin{prop}\label{T:B1} 
Let the assumptions of Theorem \ref{T:A} be satisfied and let $\ov f\in\cO_\alpha:=\cO_\beta\cap\E_\alpha.$
Then, there exist constants $\delta=\delta(\ov f)>0$ and $r=r(\ov f)>0$ with the property that for all $f_0\in \cO_\alpha$ with $\|f_0-\ov f\|_\alpha\leq r$
the problem
\begin{equation}\label{QP'}
  \dot f=\Phi(f)[f],\quad t>0,\qquad f(0)=f_0.\tag{QP}\\[-0.1ex]
\end{equation}
possesses a classical solution 
 \[
 f\in {  C}([0,\delta],\cO_\alpha)\cap{  C}((0,\delta],\E_1)\cap {  C}^1((0,\delta],\E_0)\cap    {  C}^{ \alpha-\beta}([0,\delta],\E_\beta).
 \]
Moreover, if   $h$  is a further solution to \eqref{QP'} with
 \[
 h\in {  C}((0,\delta],\E_1)\cap {  C}^1((0,\delta],\E_0)\cap {  C}^\eta([0,\delta],\cO_\beta)\qquad\text{for some $\eta\in(0,\alpha-\beta]$,}
 \]
then $f\equiv h$.
\end{prop}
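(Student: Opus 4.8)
The plan is to recast \eqref{QP'} near $\ov f$ as a fixed point problem for the parabolic evolution operators furnished by Proposition \ref{T:Aa}, and then to deduce uniqueness from the perturbation estimates \eqref{EOP5}. First I would use the local Lipschitz continuity \eqref{THT}: since $-\Phi\in C^{1-}(\cO_\beta,\kH(\E_1,\E_0))$ and $\ov f\in\cO_\alpha\subset\cO_\beta$, by \eqref{OPEN} the set $\kH(\E_1,\E_0)$ is open, so I may fix $\rho>0$ with $\ov{\bB}_{\E_\beta}(\ov f,2\rho)\subset\cO_\beta$, a Lipschitz constant $\ell$ for $-\Phi$ on that ball, and constants $\kappa\geq1$, $\omega>0$ with $-\Phi(g)\in\kH(\E_1,\E_0,\kappa,\omega)$ for all $g$ in that ball (shrinking $\rho$ and using \eqref{OPEN} if necessary so the class parameters are uniform). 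The candidate solution space is
\[
V_{\delta,R}:=\{f\in C([0,\delta],\E_\alpha)\cap C^{\alpha-\beta}([0,\delta],\E_\beta)\,:\, f(0)=f_0,\ \|f(t)-f_0\|_\alpha\leq R,\ [f]_{C^{\alpha-\beta}([0,\delta],\E_\beta)}\leq R\},
\]
a closed subset of $C([0,\delta],\E_\alpha)$, on which I define $\cT f$ to be the classical solution $u(\cdot\,;f_0,\bA_f)$ of the \emph{linear} problem \eqref{LP0x} with $\bA_f(t):=-\Phi(f(t))$. Here $f\in V_{\delta,R}$ forces $f(t)\in\ov{\bB}_{\E_\beta}(\ov f,2\rho)$ once $r+R\delta^{\alpha-\beta}$ is small, and $\bA_f\in C^{\alpha-\beta}([0,\delta],\kH(\E_1,\E_0))$ with Hölder seminorm $\leq \ell R$ by composing the Lipschitz bound with the Hölder bound on $f$; thus the family $\{\bA_f\,:\,f\in V_{\delta,R}\}$ satisfies the hypotheses of Proposition \ref{T:Aa} with $L:=\ell R$.

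Next I would check that $\cT$ maps $V_{\delta,R}$ into itself and is a contraction for suitable $\delta,R,r$. The self-mapping property is exactly Proposition \ref{T:Ab}$(i)$ applied with $x=f_0$: since $f_0\in\cO_\alpha$, say $\|f_0-\ov f\|_\alpha\leq r$, one gets $u=\cT f\in C^{\alpha-\beta}([0,\delta],\E_\beta)$ with $[\,\cT f\,]_{C^{\alpha-\beta}([0,\delta],\E_\beta)}\leq C\|f_0\|_\alpha$ and $\|\cT f(t)-\cT f(0)\|_\alpha=\|U_{\bA_f}(t,0)f_0-f_0\|_\alpha\to0$ as $t\to0$ uniformly (continuity of $t\mapsto U_{\bA_f}(t,0)f_0$ in $\E_\alpha$ from Proposition \ref{T:Aa}$(ii)$, with the modulus of continuity controlled only through $L$, $\kappa$, $\omega$, hence uniform over $V_{\delta,R}$); so by choosing first $R$ large enough that $C\|f_0\|_\alpha\leq R$ — say $R:=2C(\|\ov f\|_\alpha+1)$ with $r\leq1$ — and then $\delta$ small enough, both constraints defining $V_{\delta,R}$ hold for $\cT f$. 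For the contraction, given $f,g\in V_{\delta,R}$, the function $\cT f-\cT g=u(\cdot\,;f_0,\bA_f)-u(\cdot\,;f_0,\bA_g)$ is estimated by Proposition \ref{T:Ab}$(ii)$ (with $\beta$ there replaced by $\alpha$ and $\alpha$ there by $1$, or more simply with the roles $\beta\to\alpha$, exponent $1-\alpha>0$):
\[
\|\cT f(t)-\cT g(t)\|_\alpha\leq C\,t^{\,1-\alpha}\max_{\tau\in[0,t]}\|\bA_f(\tau)-\bA_g(\tau)\|_{\kL(\E_1,\E_0)}\|f_0\|_1\,;
\]
however $f_0$ need not lie in $\E_1$, so instead I would use \eqref{EOP5} in the form with $x\in\E_\alpha$ and exponent $\alpha-\beta'$ for some $\beta'<\alpha$, combined with $\max_\tau\|\bA_f(\tau)-\bA_g(\tau)\|\leq \ell\,\|f-g\|_{C([0,\delta],\E_\beta)}\leq \ell\,\|f-g\|_{C([0,\delta],\E_\alpha)}$, giving $\|\cT f-\cT g\|_{C([0,\delta],\E_\alpha)}\leq C\ell\,\delta^{\alpha-\beta'}\|f-g\|_{C([0,\delta],\E_\alpha)}$, which is $<\tfrac12$ for $\delta$ small. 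Shrinking $\delta$ once more to absorb the $\E_\beta$-Hölder part of the $V_{\delta,R}$-metric into the argument if one wants contraction in the full $V_{\delta,R}$-norm (alternatively: contract in $C([0,\delta],\E_\alpha)$ only, using that $V_{\delta,R}$ is closed there), Banach's fixed point theorem yields a unique $f\in V_{\delta,R}$ with $\cT f=f$, i.e. $\dot f=-\bA_f(t)f=\Phi(f(t))[f(t)]$ with $f(0)=f_0$, and $f\in C((0,\delta],\E_1)\cap C^1((0,\delta],\E_0)$ by the regularity of classical solutions of \eqref{LP0x}. This $\delta$ and $r$ depend only on $\ov f$ through the data $\rho,\ell,\kappa,\omega,C$, as required.

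Finally, for the uniqueness assertion, suppose $h\in C((0,\delta],\E_1)\cap C^1((0,\delta],\E_0)\cap C^\eta([0,\delta],\cO_\beta)$ also solves \eqref{QP'} with the same $f_0$, for some $\eta\in(0,\alpha-\beta]$. The key point is that $h$ is a classical solution of the linear problem \eqref{LP0x} with coefficient $\bB(t):=-\Phi(h(t))$, which lies in $C^\eta([0,\delta_0],\kH(\E_1,\E_0))$ on any $[0,\delta_0]$ where $h([0,\delta_0])$ stays in the ball around $\ov f$ (true for $\delta_0$ small by continuity of $h$ into $\E_\beta$ and $h(0)=f_0$ close to $\ov f$); by uniqueness of classical solutions of the linear Cauchy problem \eqref{LP0x} we get $h(t)=U_{\bB}(t,0)f_0$ on $[0,\delta_0]$. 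Comparing with $f(t)=U_{\bA_f}(t,0)f_0$ and applying \eqref{EOP5} (again with exponent $\alpha-\beta'$, $\beta'<\alpha$, noting $f_0\in\E_\alpha$) together with the Lipschitz bound $\|\bA_f(\tau)-\bB(\tau)\|\leq\ell\|f(\tau)-h(\tau)\|_\beta$, one obtains
\[
\|f(t)-h(t)\|_\beta\leq C\,t^{\,\alpha-\beta'}\ell\,\max_{\tau\in[0,t]}\|f(\tau)-h(\tau)\|_\beta\,\|f_0\|_\alpha\qquad(0\leq t\leq\delta_0),
\]
so for $t\leq\delta_1$ small the prefactor is $<1$ and $\max_{[0,\delta_1]}\|f-h\|_\beta=0$, hence $f\equiv h$ on $[0,\delta_1]$; a standard continuation/connectedness argument (the set $\{t\in[0,\delta]\,:\,f=h \text{ on }[0,t]\}$ is nonempty, closed, and open in $[0,\delta]$ by repeating the local argument with base point $f(t_*)=h(t_*)$) extends this to all of $[0,\delta]$. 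The main obstacle I anticipate is bookkeeping the loss of an $\E_\alpha$-bound through the evolution operator: one must never demand $f_0\in\E_1$ and must instead route every perturbation estimate through the smoothing inequalities \eqref{EOP4}--\eqref{EOP5} with a strictly positive time-power, choosing the auxiliary index $\beta'\in[\beta,\alpha)$ and $\delta$ small so that $\delta^{\alpha-\beta'}$ beats the Lipschitz constant $\ell$ uniformly over the ball; handling the $C^{\alpha-\beta}$-Hölder component of the fixed-point metric (as opposed to the plain sup-norm) is the other delicate spot, and it is cleanest to contract in $C([0,\delta],\E_\alpha)$ while verifying the Hölder constraint only for the image $\cT f$ via Proposition \ref{T:Ab}$(i)$.
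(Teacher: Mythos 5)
Your overall blueprint for the existence part -- set up the fixed point map $\cT f := U_{\bA_f}(\cdot,0)f_0$, verify self-mapping via Proposition \ref{T:Ab}$(i)$, verify contraction via Proposition \ref{T:Ab}$(ii)$ -- is the paper's strategy. But the contraction step, as you have written it, has a genuine gap. You apply \eqref{EOP5} with $x=f_0\in\E_\alpha$ and an index $\beta'<\alpha$, which furnishes
\[
\|\cT f(t)-\cT g(t)\|_{\beta'}\leq C\,t^{\alpha-\beta'}\max_{\tau\in[0,t]}\|\bA_f(\tau)-\bA_g(\tau)\|_{\kL(\E_1,\E_0)}\,\|f_0\|_\alpha,
\]
with the norm $\|\cdot\|_{\beta'}$ on the \emph{left}. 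You then write down the conclusion with $\|\cT f-\cT g\|_{C([0,\delta],\E_\alpha)}$ on the left -- but the cited estimate does not give that. To obtain $\|\cdot\|_\alpha$ on the left you would need $f_0\in\E_{\alpha'}$ for some $\alpha'>\alpha$, which you do not have. So the map $\cT$ is not shown to be a contraction on $V_{\delta,R}$ in the $C([0,\delta],\E_\alpha)$-metric, and the fallback of contracting in the $C([0,\delta],\E_\beta)$-metric is blocked because your $V_{\delta,R}$ carries the extra constraint $\|f(t)-f_0\|_\alpha\le R$, and a closed ball of $\E_\alpha$ need not be closed in $\E_\beta$ in the abstract setting of Theorem \ref{T:A} (no reflexivity or duality hypothesis is made). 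The paper circumvents this by dropping the $\E_\alpha$-constraint altogether: its set $\bM$ only demands $f(t)\in\ov{\bB}_{\E_\beta}(\ov f,R)$ plus a H\"older-$\rho$ bound with $\rho\in(0,\alpha-\beta)$ strictly smaller than $\alpha-\beta$; this makes $\bM$ complete in $C([0,\delta],\E_\beta)$, the contraction lives there, and the $\E_\alpha$-regularity of the fixed point (and in particular $f\in C([0,\delta],\cO_\alpha)$) is recovered \emph{a posteriori} from $f=U_{\bA_f}(\cdot,0)f_0$ via Proposition \ref{T:Aa}$(ii)$, since $f_0\in\E_\alpha$.

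Your uniqueness argument, by contrast, is a genuinely different and valid route from the paper's. The paper proves uniqueness by contradiction: it sets $t_0:=\max\{t:f=h\text{ on }[0,t]\}$ and reruns the fixed-point construction on a new set $\bM_0$ with the \emph{deliberately weakened} H\"older exponent $\eta/2$, so that both $f(\cdot+t_0)$ and $h(\cdot+t_0)$ lie in $\bM_0$ (with H\"older constant $\le 1$) once the time window is short enough, and then concludes they are both fixed points of a contraction. You instead represent $h(t)=U_\bB(t,0)f_0$ with $\bB(\cdot)=-\Phi(h(\cdot))$ (using uniqueness of classical solutions of the linear Cauchy problem), compare with $f(t)=U_{\bA_f}(t,0)f_0$ directly through the perturbation estimate \eqref{EOP5} with $\beta'=\beta$, and conclude $\max_{[0,\delta_1]}\|f-h\|_\beta=0$ for $\delta_1$ small, then propagate by the standard open-and-closed argument. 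This is cleaner -- it avoids the cosmetic exponent $\eta/2$ -- and is sound, provided you note that the two fixed coefficients $\bA_f,\bB$ form a family $\A$ of cardinality two for which the constants $\rho:=\eta$, $L$, $\kappa$, $\omega$ in Proposition \ref{T:Aa} can be chosen, so that the constant $C$ in \eqref{EOP5} is fixed while you shrink $\delta_1$.
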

\begin{proof}
{\em Existence.} We first note that  $\cO_\alpha$   is an open subset of $\E_\alpha$ (see e.g. \cite[Section I.2.11]{Am95}).
Since by assumption  $-\Phi\in C^{1-}(\cO_\beta,\kH(\E_1,\E_0))$, it follows from \eqref{OPEN} there exist constant $R>0$,  $L>0$, $\kappa\geq 1$, and $\omega>0$ such that 
\begin{align}
&\text{$\|\Phi(f)-\Phi(g)\|_{\kL(\E_1,\E_0)}\leq L\|f-g\|_\beta$\quad for all $f,\, g\in \overline{\bB}_{\E_\beta}(\ov f,R)\subset\cO_\beta,$}\label{aa1}\\[1ex]
&\text{$-\Phi(f)\in\kH(\E_1,\E_0,\kappa,\omega)$ \quad for all  $ f\in \overline{\bB}_{\E_\beta}(\ov f, R)$}.\label{aa2}
\end{align}
Let $\rho\in(0,\alpha-\beta)$ be fixed. 
If $r>0$ is sufficiently small, it holds that   
\begin{align}\label{aa3}
  \overline{\bB}_{\E_\alpha}(\ov f, r)\subset  \overline{\bB}_{\E_\beta}(\ov f,R)\cap \cO_\alpha. 
\end{align}
Given $\delta>0$, $r>0$ such that \eqref{aa3} holds ($r$ and $\delta$ will be fixed later on) and $f_0\in   \overline{\bB}_{\E_\alpha}(\ov f, r)$, we define the set
 \begin{align*}
  \bM:=\big\{f\in   C \big([0,\delta], \overline{\bB}_{\E_\beta}(\ov f,R)\big)\,:\, \text{$f(0)=f_0$ and $\|f(t)-f(s)\|_\beta\leq |t-s|^\rho$ $\forall\, t,\,s\in[0,\delta]$}\big\}.
 \end{align*}
Since $\bM$ is a closed subset of $ C ([0,\delta],\E_\beta)$ it is also a (nonempty) complete metric space.
Given $f\in\bM,$ we define 
$$\bA_f(t):= -\Phi(f(t)),\qquad t\in[0,\delta].$$
As a direct consequence of \eqref{aa1} and of the definition of $\bM$ it follows that
\[
\|\bA_f(t)-\bA_f(s)\|_{\kL(\E_1,\E_0)}\leq L\|f(t)-f(s)\|_\beta\leq L|t-s|^\rho, \qquad t,\,s\in[0,\delta], 
\]
and \eqref{aa2} yields that $\bA_f(t)\in\kH(\E_1,\E_0,\kappa,\omega)$  for all $f\in\bM$ and all $t\in[0,\delta].$
Proposition \ref{T:Aa} ensures  the existence of a parabolic evolution operator $U_{\bA_f}$ for $\bA_f$.
Given $f\in\bM$, it is  natural to consider the linear evolution problem
\begin{align}\label{LP0f}
\dot g+\bA_f(t)g=0,\quad t\in(0,\delta], \qquad g(0)=f_0, 
\end{align}
which has, in view of Proposition \ref{T:Ab} a unique classical solution
$$g:=\Gamma(f):=U_{\bA_f}(\,\cdot\,,0)f_0\in C^{\alpha-\beta}([0,\delta],\E_\beta)\cap C([0,\delta],\E_\alpha).$$
The existence part of Proposition \ref{T:Aa} reduces  to proving that $\Gamma:\bM\to\bM$ is a strict contraction for suitable $r$ and $\delta$.
Clearly $\Gamma(f)(0)=f_0$. Moreover,  \eqref{EOP4} yields 
\[
\|\Gamma(f)(t)-\Gamma(f)(s)\|_\beta\leq C|t-s|^{\alpha-\beta}\|f_0\|_\alpha \leq  C\delta^{\alpha-\beta-\rho}(\|\ov f\|_\alpha+r)|t-s|^\rho\leq |t-s|^\rho,\quad \forall t,\, s\in[0,\delta],
\]
 provided that
\begin{equation}\label{cond1}
C\delta^{\alpha-\beta-\rho}(\|\ov f\|_\alpha+r)\leq 1.
\end{equation}
The latter estimate (with $s=0$) yields
\begin{align*}
\|\Gamma(f)(t)-\ov f\|_\beta\leq \|\Gamma(f)(t)-\Gamma(f)(0)\|_\beta+\|f_0-\ov f\|_\beta\leq \delta^\rho+r\|i_{\E_\alpha\hookrightarrow\E_\beta}\|_{\kL(\E_\alpha,\E_\beta)}\leq R,
\quad \forall t\in[0,\delta],
\end{align*}
if we  additionally require that
\begin{equation}\label{cond2}
\delta^\rho+r\|i_{\E_\alpha\hookrightarrow\E_\beta}\|_{\kL(\E_\alpha,\E_\beta)}\leq R.
\end{equation}
We now assume that $r$ and $\delta$ are chosen such that \eqref{cond1}-\eqref{cond2} hold true.  It then follows that $\Gamma:\bM\to\bM$ is a well-defined map.
Furthermore,  given $f,\, h\in\bM$, the estimate \eqref{EOP5} together with~\eqref{aa1}  yields 
\begin{align*}
\|\Gamma(f)(t)-\Gamma(h)(t)\|_\beta&=\|U_{\bA_f}(t,0)f_0-U_{\bA_h}(t,0)f_0\|_\beta\leq Ct^{\alpha-\beta}\max_{\tau\in[0,t]} \|\bA_f(\tau)-\bA_h(\tau)\|_{\kL(\E_1,\E_0)}\|f_0\|_\alpha\\[1ex]
&\leq C\delta^{\alpha-\beta}L(\|\ov f\|_\alpha+r)\max_{t\in[0,\delta]} \|f(t)-h(t)\|_{\beta}\\[1ex]
&\leq \frac{1}{2}\max_{t\in[0,\delta]} \|f(t)-h(t)\|_{\beta}, \quad \forall t\in[0,\delta],
\end{align*}
provided that
 \begin{equation}\label{cond3}
C\delta^{\alpha-\beta}L(\|\ov f\|_\alpha+r)\leq \frac{1}{2}.
\end{equation}
 Hence, if $r$ and $\delta$ are chosen such that also \eqref{cond3} is satisfied, then $\Gamma$ is a strict contraction and Banach's fixed point theorem ensures that $\Gamma$ has a fixed point.
 This proves the existence part.\medskip
 
 \noindent{\em Uniqueness.} Let $f$ be  a solution  to \eqref{QP'} as found above and let  $h\not\equiv f$ be a further classical solution such that
 $h\in C^\eta([0,\delta],\E_\beta) $ for some $\eta\in(0,\alpha-\beta]$. 
 The real number
 \[
t_0:=\max\{t\in[0,\delta]\,:\, f|_{[0,t]}=h|_{[0,t]}\} 
 \]
 satisfies $0\leq t_0<\delta$ and $f =h$ on $[0,t_0].$
 Since $f(t_0)\in\cO_\alpha$, there exist   $R>0$,  $L>0$, $\kappa\geq 1$, and $\omega>0$ such that 
\begin{align*}
&\text{$\|\Phi(u)-\Phi(v)\|_{\kL(\E_1,\E_0)}\leq L\|u-v\|_\beta$\quad for all $u,\, v\in \overline{\bB}_{\E_\beta}(f(t_0),R)\subset\cO_\beta,$} \\[1ex]
&\text{$-\Phi(u)\in\kH(\E_1,\E_0,\kappa,\omega)$ \quad for all  $ u\in \overline{\bB}_{\E_\beta}( f(t_0), R)$}. 
\end{align*}
   Given $ \delta_0\in(t_0,\delta]$, the set 
  \begin{align*}
  \bM_0:=\Big\{h\in   C \big([0,\delta_0-t_0], \overline{\bB}_{\E_\beta}(f(t_0),R)\big)\,:\, \text{$h(0)=f(t_0)$, $\frac{\|h(t)-h(s)\|_\beta}{|t-s|^{\eta/2}}\leq1 $ $\forall\,
   t\neq s\in[0,\delta_0-t_0]$}\Big\}
 \end{align*}
  is   a (nonempty) complete metric space.
  Letting $\bA_h(t):=-\Phi(h(t)) $ for $h\in\bM_0$ and $t \in[0,\delta_0-t_0]$, we may argue as in the existence part of this proof to conclude that the linear problem
  \begin{align*}
\dot u+\bA_h(t)u=0,\quad t\in(0,\delta_0-t_0], \qquad h(0)=f(t_0)
\end{align*}
has a unique classical  solution $\Gamma_0(h)\in C^{\alpha-\beta}([0,\delta_0-t_0],\E_\beta)\cap C([0,\delta_0-t_0],\E_\alpha)$. 
Furthermore,  $\Gamma_0:\bM_0\to\bM_0 $ is a $1/2$-contraction provided that $\delta_0$ is sufficiently close to $t_0$, hence $\Gamma_0$ has a unique fixed point.
But, if $\delta_0-t_0$ is sufficiently small, then it can be easily seen that  $f(\,\cdot\,+t_0)|_{[0,\delta_0-t_0]}$ and $h(\,\cdot\,+t_0)|_{[0,\delta_0-t_0]}$ both belong to $\bM_0$ and these functions are therefore fixed points of $\Gamma_0$. This implies that $f=h$ on $[0,\delta_0]$ for some $\delta_0>t_0$, in  contradiction with  the definition of $t_0.$ 
This proves the uniqueness  claim.
\end{proof}

We are now in a position to prove Theorem \ref{T:A}.
\begin{proof}[Proof of Theorem \ref{T:A}]
Let $f_0\in\cO_\alpha$ be given.  According to Proposition \ref{T:B1} (with $\ov f:=f_0$), there exists $\delta>0$ and a classical solution 
\[
 f\in {  C}([0,\delta],\cO_\alpha)\cap{  C}((0,\delta],\E_1)\cap {  C}^1((0,\delta],\E_0)\cap    {  C}^{ \alpha-\beta}([0,\delta],\E_\beta) 
 \]
 to \eqref{QP'}. This solution can be continued as follows. 
 Applying Proposition \ref{T:B1} (with  $\ov f:=f(\delta)$), we find $r>0$ and $\delta_1>0$ such that 
  \begin{align}\label{CPas} 
\dot h =\Phi(h)[h],\quad t\in(0,\delta_1], \qquad h(0)=f_1
\end{align}  
has a classical solution $h\in {  C}([0,\delta_1],\cO_\alpha)\cap { C}((0,\delta_1],\E_1)\cap {  C}^1((0,\delta_1],\E_0)\cap  {  C}^{ \alpha-\beta}([0,\delta_1],\E_\beta)$ for each
$f_1\in\cO_\alpha$ with $\|f_1-f(t_0)\|_\alpha\leq r.$
Let $t_0\in(0,\delta)$ be such that 
\[t_0+\delta_1>\delta \qquad \text{and}\qquad \|f(t_0)-\ov f\|_\alpha\leq r_.\]
Hence it is possible to chose $f_1:=f(t_0)$ as an initial value in \eqref{CPas}. 
Since $f(\,\cdot\,+t_0):[0,\delta-t_0]\to\cO_\alpha$
and     $h:[0,\delta-t_0]\to\cO_\alpha$ are both classical solutions to
 \begin{align*} 
\dot h =\Phi(h)[h],\quad t\in(0,\delta-t_0], \qquad h(0)=f(t_0),
\end{align*}
 by Proposition \ref{T:B1} they must coincide. 
Consequently, the function $F:[0,t_0+\delta_1]\to \cO_\alpha$ defined by
\[
F(t):=
\left\{
\begin{array}{lll}
f(t)&,& t\in[0,\delta],\\[1ex]
h(t-t_0)&,& t\in[\delta, t_0+\delta_1]
\end{array}
\right.
\]
is a classical solution to \eqref{QP'} which extends $f$.
The maximal solution $f=f(\,\cdot\,; f_0):I(f_0)\to\cO_\alpha$  in Theorem \ref{T:A} is defined by setting
\begin{align*}
 &I(f_0):=\bigcup \big\{[0,\delta]\,:\, \text{\eqref{QP'} has a classical solution $f_\delta $ on $[0,\delta]$ with $f_\delta\in {C}^{\alpha-\beta}([0,\delta],\E_\beta)$}\big\}\\[1ex] 
 &f(t):=f_\delta(t)\qquad\text{for $t\in[0,\delta]$}.
\end{align*}
The construction  above  shows that $f$ is well-defined and that $I(f_0)=[0,T_+(f_0))$ with $T_+(f_0)\leq\infty$.
 This proves the existence claim in Theorem \ref{T:A}.
The uniqueness assertion  is an immediate consequence of Proposition \ref{T:B1}. \medskip

  We now prove the criterion for global existence. Hence, let us assume that the unique classical maximal solution  $f=f(\,\cdot\,;f_0):[0,T_+(f_0))\to\cO_\alpha$  to \eqref{QP'}
  is uniformly continuous when restricted to each interval $[0,T]\cap [0,T_+(f_0)),$ with $T>0$ arbitrary.
  We further assume that $\tau:=T_+(f_0)<\infty$, otherwise we are done. 
  Then, since $f$ is uniformly continuous on $[0,\tau)$, it is straightforward to see that the limit 
  $$f(\tau):=\underset{t\nearrow \tau}\lim f(t)$$ exists in $\ov\cO_\alpha$.
  If ${\rm dist}(f(t),\p\cO_\alpha)\not\to 0$ for $t\to \tau$, it must hold that $f(\tau)\in \cO_\alpha$.
  Proceeding as above, we may extend in view of Proposition \ref{T:B1} this maximal solution to an interval $[0,\tau+\delta_1)$ for some $\delta_1>0$, which is a contradiction and we are done.
  
 Finally, the semiflow property of the solution map $[(t,f_0)\mapsto f(t;f_0)]$ stated at the end of  Theorem \ref{T:A} 
 is proven in detail in \cite[Theorem 8.1]{Am88}. Furthermore, if $\Phi$ is additionally smooth, then proceeding as in  \cite[Theorem 11.3]{Am88} one may show that the semiflow 
 map is also smooth. For real-analytic $\Phi$ the real-analyticity of  $[(t,f_0)\mapsto f(t;f_0)]$ follows by  estimating the Fr\'echet derivatives of the flow map, 
 which is a rather tedious and lengthy procedure which we refrain from presenting here.
\end{proof}

\vspace{0.5cm}
\hspace{-0.5cm}{ \bf Acknowledgements} 
The author would like to thank Christoph Thiele  for the discussion on an issue related to the analysis in Section \ref{S2}.

\bibliographystyle{abbrv}
\bibliography{B}
\end{document}